\numberwithin{equation}{section}
\newtheorem{Th}{Theorem}[section]
\newtheorem{Prop}[Th]{Proposition}
\newtheorem{Lem}[Th]{Lemma}
\newtheorem{Def}[Th]{Definition}
\newtheorem{Rem}[Th]{Remark}
\newcommand{\wt}{\widetilde}
\newcommand{\vp}{\varphi}
\newcommand{\eps}{\varepsilon}
\newcommand{\C}{\mathbb{C}}
\newcommand{\R}{\mathbb{R}}
\newcommand{\N}{\mathbb{N}}
\newcommand{\cA}{{\mathcal A}}
\newcommand{\cD}{{\mathcal D}}
\newcommand{\cE}{{\mathcal E}}
\newcommand{\cF}{{\mathcal F}}
\newcommand{\cG}{{\mathcal G}}
\newcommand{\cH}{{\mathcal H}}
\newcommand{\cI}{{\mathcal I}}
\newcommand{\cJ}{{\mathcal J}}
\newcommand{\cM}{{\mathcal M}}
\newcommand{\cS}{{\mathcal S}}
\newcommand{\weakto}{\rightharpoonup}
\newcommand{\supp}{\mathrm{supp}\,}
\newcommand{\dist}{\mathrm{dist}\,}
\newcommand{\loc}{\mathrm{loc}}
\newcommand{\rad}{\mathrm{rad}}
\newcommand{\tr}{\mathrm{tr}}
\newcommand{\divv}{\mathrm{div}}
\newcommand{\diam}{\mathrm{diam}}
\def\e{{\rm e}}
 \def\dd{\, {\rm d}}
\newcommand{\bessel}[2]{\mathcal{K}_{#1}(#2)}
\begin{document}
	
	\title[Fractional magnetic pseudorelativistic operator]{On a fractional magnetic pseudorelativistic operator: properties and applications}
	
	\author[F. Bernini]{Federico Bernini}
	\address[F. Bernini]{\newline\indent
		Dipartimento di Matematica
		\newline\indent
		Università degli Studi di Milano
		\newline\indent
		Via C. Saldini, 50
		\newline\indent
		20133 Milan, Italy.}
	\email{\href{mailto:federico.bernini@unimi.it}{federico.bernini@unimi.it}
	}
	
	\author[P. d'Avenia]{Pietro d'Avenia}
	\address[P. d'Avenia]{\newline\indent
		Dipartimento di Meccanica, Matematica e Management 
		\newline\indent
		Politecnico di Bari
		\newline\indent
		Via E. Orabona 4
		\newline\indent
		70125 Bari, Italy.}
	\email{\href{mailto:pietro.davenia@poliba.it}{pietro.davenia@poliba.it}
	}
	
	\subjclass[2020]{
		35S05,
		35J61,
		35Q60.
	}
	
	\keywords{Pseudorelativistic fractional magnetic operators, magnetic Sobolev spaces, minimization problems.}
	
	\begin{abstract}
		We introduce a fractional magnetic pseudorelativistic operator for a general {\em fractional order} $s\in(0,1)$. First we define a suitable functional setting and we prove some fundamental properties. Then we show the behavior of the operator as $s \nearrow 1$ obtaining some results {\em \`a la} Bourgain-Brezis-Mironescu and {\em removing the singularity} from the integral definition. Finally we get existence of weak solutions for some semilinear equations involving a power type nonlinearity or a nonlocal (Choquard type) term.
	\end{abstract}
	
	\maketitle
	
	\begin{center}
		\begin{minipage}{11cm}
			\tableofcontents
		\end{minipage}
	\end{center}

	\section{Introduction}
	In the last years a great attention has been devoted to {\em fractional} operators and, in particular, to {\em nonlocal PDEs} involving such type of operators, due to several models and applications in which they appear.
	
	From a mathematical point of view, fractional operators can be obtained as {\em generalizations} of the corresponding local ones.\\
	Indeed, it is well known that, if $u\in \mathcal{S}(\R^3)$, the Schwartz space of rapidly decaying functions,
	\[
	-\Delta u = \mathcal{F}^{-1}(|\xi|^{2}\mathcal{F}(u)(\xi)),
	\]
	where $\mathcal{F}$ denotes the Fourier transform and the function $\xi\mapsto |\xi|^2$ is usually called {\em symbol}.
	
	Its fractional version, the {\em Fractional Laplacian}, can be defined generalizing the symbol, namely taking the function $\xi\mapsto |\xi|^{2s}$, $s\in(0,1)$, and so
	\begin{equation}
		\label{fraclapl}
		(-\Delta)^s u:= \mathcal{F}^{-1}(|\xi|^{2s}\mathcal{F}(u)(\xi)),
		\qquad
		s\in(0,1),
		u\in \mathcal{S}(\R^3).
	\end{equation}
	The Fractional Laplacian is a {\em nonlocal} operator and it can be proved that, for $u\in \mathcal{S}(\R^3)$,
	\begin{equation}
		\label{slaplacian}
		(-\Delta)^s u (x) = c_s\lim_{\eps\searrow 0}\int_{B^c_\eps(x)}\frac{u(x)-u(y)}{|x-y|^{3+2s}} \dd y,
		\quad x\in\R^3,
	\end{equation}
	where 
	\[
	c_{s} 
	= s 2^{2s} \frac{\Gamma\big(\frac{3+2s}{2}\big)}{\pi^{3/2}\Gamma(1-s)}
	= 
	\left(\int_{\R^3} \frac{1-\cos(\zeta_1)}{|\zeta|^{3+2s}} \dd \zeta \right)^{-1} 
	\]
	and $\Gamma$ is the Gamma function.\\ 
	The integral representation \eqref{slaplacian} can be also obtained by the L\'evy-Khintchine formula, considering the generator ${\mathscr H}$ of the semigroup on $C^\infty_c(\R^3)$ associated to a general L\'evy process which is given by
	\begin{equation}\label{LKs}
		\begin{split}
			{\mathscr H}u(x)
			&= -a_{ij}\partial_{x_i x_j}^2 u(x)-b_i\partial_{x_i}u(x)\\
			&\quad -\lim_{\eps\searrow 0}\int_{B^c_\eps(0)}\Big(u(x+y)-u(x)-1_{\{|y|<1\}}(y)y\cdot\nabla u(x)\Big)\dd\mu,
		\end{split}
	\end{equation}
	where $\mu$ is a L\'evy nonnegative measure, namely 
	\[
	\int_{\R^3}\frac{|y|^2}{1+|y|^2}\dd\mu<+\infty,
	\]
	and the integral term represents the purely jump part of the L\'evy process.\\
	Then, taking, among the L\'evy processes, the only stochastically stable ones having jump part, namely
	\[
	\dd\mu=\frac{c_s}{|y|^{3+2s}}\dd y,
	\]
	the last term in \eqref{LKs} reads as the right hand side of \eqref{slaplacian}.\\
	For more details the reader can refer to \cite{DiPaVa}.\\
	Moreover, up to multiplicative constants, the {\em fractional counterpart} of $\|\nabla u\|_2^2$ is the {\em Gagliardo seminorm}
	\[
	\int_{\R^3\times\R^3} \frac{|u(x)-u(y)|^2}{|x-y|^{3+2s}} \dd x\dd y 
	\]
	and Bourgain, Br\'ezis, and Mironescu in \cite{BoBrMi} proved that, if $\Omega$ is a smooth bounded domain and $u\in W^{1,2}(\Omega)$,
	\begin{equation}
		\label{BBM:classic}
		\lim_{s\nearrow 1} (1-s) \int_{\Omega\times\Omega} \frac{|u(x)-u(y)|^2}{|x-y|^{3+2s}} \dd x\dd y 
		=\frac{2}{3}\pi \int_\Omega |\nabla u|^2 \dd x.
	\end{equation}
	The same result holds for $\Omega=\R^3$ (see \cite[Remark 4.3]{DiPaVa}).\footnote{Often in the literature the coefficient of the integral in the rhs of \eqref{BBM:classic} is written as
		\[
		Q_3=\frac12\int_{\mathbb{S}^2}|e \cdot k|^2 \dd \sigma(k),   
		\]
		where $\mathbb{S}^2$ is the unit sphere in $\mathbb{R}^3$ and $e$ is an arbitrary element of $\mathbb{S}^2$.
	}
	
	It is usual to meet PDEs where the sum of the term $-\Delta u$ or $(-\Delta)^s u$ plus $m^2 u$ or $m^{2s} u$ (sometimes called {\em mass} term) appears.
	
	On the other hand, in many applications, especially in Physics for questions related to pseudo-relativistic boson stars (see e.g. \cite{ElgartSchlein,FrolichJonssonLenzmann,LiebThirring,LiebYau87,LiebYau88}), such a {\em perturbation} of the {\em differential} operator with a mass term {\em acts} in a different form: taking the {\em symbol} $\sqrt{|\xi|^2+m^2}$, the operator
	\begin{equation}
		\label{pseudorelop}
		\sqrt{-\Delta + m^2} u
		:=\mathcal{F}^{-1}(\sqrt{|\xi|^{2}+m^2}\mathcal{F}(u)(\xi)),
		\qquad
		u\in \mathcal{S}(\R^3),
	\end{equation}
	appears (see also e.g. \cite{LiLo,LiebSeiringer})\footnote{It is usual to find the operator in \eqref{pseudorelop} in the form $(\sqrt{-\Delta + m^2}-m) u$. Here we focus in the nontrivial part \eqref{pseudorelop}.}.\\
	Observe that, if $m=0$, the operator \eqref{pseudorelop} coincides with \eqref{fraclapl} for $s=1/2$.
	
	Of course it is natural to generalize the symbol in \eqref{pseudorelop} considering a general power $s\in(0,1)$, namely
	\begin{equation}
		\label{pseudo:nonmagn:s:fourier}
		(-\Delta + m^2)^s u
		:=
		\mathcal{F}^{-1}(({|\xi|^{2}+m^2)^s}\mathcal{F}(u)(\xi)),
		\qquad
		u\in \mathcal{S}(\R^3),
		\quad
		s\in(0,1)
	\end{equation}
	(see \cite{CarmonaMastersSimon}).\\
	It is possible to prove (see e.g. \cite[Remark 5]{FaFe}) that, for $u\in C^\infty_c(\R^3)$,
	\begin{equation}
		\label{pseudo:nonmagn:s}
		(-\Delta+m^2)^su(x)= C_s m^{\frac{3+2s}{2}}\lim_{\eps\searrow 0}\int_{B^c_\eps(x)} \frac{u(x)-u(y)}{|x-y|^{\frac{3+2s}{2}}}\mathcal{K}_{\frac{3+2s}{2}}(m|x-y|) \dd y + m^{2s}u(x),
		\quad
		x\in\R^3,
	\end{equation}
	where
	\begin{equation}
		\label{pse:rel:mag:con}
		C_s := c_s\frac{2^{-\frac{3+2s}{2}+1}}{\Gamma\left(\frac{3+2s}{2} \right)} = \frac{s2^{\frac{2s-1}{2}}}{\pi^{\frac{3}{2}}\Gamma(1-s)}
	\end{equation}
	and $\mathcal{K}_\nu$ denotes the \textit{modified Bessel function of the third kind\footnote{In literature, it can also be found under other names, like modified Bessel function of the second kind of order $\nu$,  Basset function or MacDonald function (e.g. see \cite{ErMaObTr}). } of order $\nu$}.
	
	\begin{Rem}
		If $m\geq 0$ and $u\in C^\infty_c(\R^3)$, simply starting from the integral term of \eqref{LKs} and taking
		\begin{equation}
			\label{fract:Levy:measure}
			\dd\mu_s^m=\mu_s^m(y)\dd y = 
			\begin{cases}
				\displaystyle \frac{c_s}{|y|^{3+2s}}\dd y, &m=0,\\
				\displaystyle C_s m^{\frac{3+2s}{2}}\frac{\mathcal{K}_{\frac{3+2s}{2}}(m|y|)}{|y|^{\frac{3+2s}{2}}}\dd y, &m>0,
			\end{cases}
		\end{equation}
		we can write both \eqref{slaplacian} and \eqref{pseudo:nonmagn:s} as
		\[
		(-\Delta + m^2)^s u (x)
		=
		\lim_{\eps\searrow 0} \int_{B^c_\eps(x)} [u(x)-u(y)] \mu_s^m(x-y) \dd y + m^{2s}u(x),
		\quad
		x\in\R^3.
		\]
	\end{Rem}
	
	Also in this case, in the spirit of \cite{BoBrMi}, Ambrosio and Essebei in \cite{AmEs2024}, taking for simplicity $m=1$, proved that
	\begin{equation}
		\label{AE:bounded}
		\lim_{s\nearrow 1} \frac{C_s}{2} \int_{\Omega\times\Omega} \frac{|u(x)-u(y)|^2}{|x-y|^\frac{3+2s}{2}} \mathcal{K}_{\frac{3+2s}{2}}(|x-y|) \dd x\dd y 
		=\int_\Omega |\nabla u|^2 \dd x
	\end{equation}
	for $\Omega\subset\R^3$ open, bounded, and with Lipschitz boundary, and
	\begin{equation}
		\label{AE:unbounded}
		\lim_{s\nearrow 1} \frac{C_s}{2} \int_{\R^3\times\R^3} \frac{|u(x)-u(y)|^2}{|x-y|^\frac{3+2s}{2}} \mathcal{K}_{\frac{3+2s}{2}}(|x-y|) \dd x\dd y 
		=\int_{\R^3} |\nabla u|^2 \dd x.
	\end{equation}
	
	\begin{Rem}
		Observe that, since
		\begin{equation*}
			\lim_{s \nearrow 1} \frac{c_s}{1-s} 
			= \frac{3}{\pi}
		\end{equation*}
		and
		\begin{equation*}
			\lim_{s \nearrow 1} \frac{C_s}{1-s} = \frac{\sqrt{2}}{\pi^{3/2}},
		\end{equation*}
		the results {\em \`a la} \cite{BoBrMi} can be expressed equivalently, up to a constant, considering in the left hand side the coefficient $(1-s)$ or, respectively, $c_s$ or $C_s$.\footnote{From \eqref{AE:bounded} and \eqref{AE:unbounded}, in such kind of results we will use the constants that appear to us to be simplest.}
	\end{Rem}
	
	Now, let us consider functions $u:\R^3\to\C$. If $u$ represents a charged matter field, to study its interaction with an assigned magnetic field $B=\nabla\times A$, with $A:\R^3\to\R^3$, in the {\em local} case it is usual to start from the Lagrangian density of the {\em free} particle and to replace the spatial derivative $\nabla$ with the {\em covariant derivative} $\nabla_A:=\nabla - i A$, where $i$ is the imaginary unit (see e.g. \cite{Felsager,Naber:Int,Naber:Found})\footnote{Actually we should take $\nabla - i q A$, where $q$ is a coupling constant. Here, for simplicity, we take $q=1$.}. Thus, the Lagrangian density term $|\nabla u|^2/2$, which gives arise to $-\Delta u$ term in the Euler-Lagrange equation, becomes $|(\nabla -iA) u|^2/2$ and, when we consider the Euler-Lagrange equation of such a term, we get the so called {\em magnetic Laplacian}
	\begin{equation}
		\label{local:magn:Laplacian}
		-(\nabla - iA)^2u = -\Delta u + iu\divv A + 2 i A \cdot \nabla u  + |A|^2 u
	\end{equation}
	(see also \cite{AvronHerbstSimon,ReedSimon1980}).
	
	At this point it appears natural to consider a relativistic version of the magnetic Laplacian and generalize it for $s\in(0,1)$. This is not only a mathematical trick but, for instance, the relativistic magnetic Laplacian appears too in several applications (see e.g. \cite{Bergmann,LiebSeiringer,FrankLiebSeiringer}).
	
	Following \cite{BerezinSubin,GrossmannLoupiasStein,Hormander1985}, in \cite{IchinoseTamura} Ichinose and Tamura introduced, through oscillatory integrals, the {\em Weyl pseudodifferential operator defined with mid-point prescription}
	\begin{equation}
		\label{Weyl:operator}
		\begin{aligned}
			\cH_A^{1/2} u (x)
			&= \frac{1}{(2\pi)^3}\int_{\R^3 \times \R^3}\e^{i(x-y)\cdot \xi}\sqrt{\left|\xi-A\left(\frac{x+y}{2}\right)\right|^2 + m^2}u(y) \dd y \dd \xi\\
			&= \frac{1}{(2\pi)^3}\int_{\R^3 \times \R^3}\e^{i(x-y)\cdot \left(\xi+A\left(\frac{x+y}{2}\right)\right)}\sqrt{|\xi|^2 + m^2}u(y) \dd y \dd \xi
		\end{aligned}
	\end{equation}
	as a relativistic version of the magnetic Laplacian, proving also that, for $u \in C^{\infty}_c(\R^3)$, \eqref{Weyl:operator} is equivalent to 
	\begin{equation}
		\label{pseudo:magn:12}
		\begin{aligned}
			\cH_A^{1/2} u(x) 
			&= - \lim_{\eps \searrow 0} \int_{B^c_\eps(0)} \left(\e^{-iy\cdot A\left(x + \frac{y}{2}\right)}u(x+y)-u(x) - 1_{\{|y|<1\}}(y)y\cdot (\nabla_x-iA(x))u(x)\right) \dd \mu_{1/2}^m\\
			&\qquad + mu(x) \\
			&=  \lim_{\eps \searrow 0} \int_{B^c_\eps(x)} \left(u(x)-\e^{i(x-y)\cdot A\left(\frac{x+y}{2}\right)}u(y)\right) \mu_{1/2}^m(y-x) \dd y 
			+mu(x),
		\end{aligned}
	\end{equation}
	where $\mu_{1/2}^m$ is given in \eqref{fract:Levy:measure} taking $s=1/2$, and obtaining in the first line a formulation which is analogous to \eqref{LKs}.
	For the detailed construction, see also \cite[Section 3.1]{Ichinose}.
	
	For the sake of completeness, we report that in the literature there are also other definitions for the pseudorelativistic magnetic operator (see e.g. \cite{IftimieMantoiuPurice2007,LiebSeiringer}).
	
	The previous considerations show as it appears natural to generalize such a relativistic magnetic operator considering in \eqref{pseudo:magn:12} the density measure $\mu_s^m$ for $s\in(0,1)$, namely
	\[
	\cH_A^{s} u(x)
	=  \lim_{\eps \searrow 0} \int_{B^c_\eps(x)} \left(u(x)-\e^{i(x-y)\cdot A\left(\frac{x+y}{2}\right)}u(y)\right) \mu_{s}^m(y-x) \dd y 
	+m^{2s} u(x).
	\]
	
	In \cite{dASq} the case $m=0$ has been studied.\footnote{We point out that in \cite{FrankLiebSeiringer} the operator $|-i\nabla - A|^{2s} := ((-i\nabla - A)^2)^s$ has been defined by the spectral theorem for $0 < s \leq 1$.} Here, for the operator
	\begin{equation}
		\label{dASq:operator}
		(-\Delta)^s_Au(x):= c_s \lim_{\eps \searrow 0} \int_{B^c_\eps(x)} \frac{u(x)-\e^{i(x-y)\cdot A\left(\frac{x+y}{2}\right)}u(y)}{|x-y|^{3+2s}} \dd y,
	\end{equation}
	the authors constructed a suitable functional setting showing its properties and they obtained existence results for some variational nonlinear equations involving \eqref{dASq:operator}. Moreover, in \cite[Theorem 2.6]{dASq}, they showed that, for {\em smooth} data, {\em smooth weak solutions} of
	\[
	(-\Delta)^s_Au + u=f
	\qquad
	\text{in }\R^3
	\]
	solves the same equation pointwise a.e. in $\R^3$.\\
	Later on Squassina and Volzone in \cite{SqVo} and Nguyen, Pinamonti, Squassina, and Vecchi \cite{NgPiSqVe}, in the spirit of \cite{BoBrMi}, proved that
	\[
	\lim_{s\nearrow 1} \frac{c_s}{2} \int_{\Omega\times\Omega} \frac{|u(x)-\e^{i(x-y)\cdot A\left(\frac{x+y}{2}\right)}u(y)|^2}{|x-y|^{3+2s}} \dd x\dd y 
	= \int_\Omega |(\nabla-iA) u|^2 \dd x
	\]
	for $\Omega\subset\R^3$ open, bounded, with Lipschitz boundary, and $A\in C^2(\overline{\Omega},\R^3)$, and for $\Omega=\R^3$ and $A$ Lipschitz, respectively.\\
	Starting from \cite{dASq}, a wide literature has been developed considering in particular several elliptic type equations, several types of nonlinearities, and some generalizations of the operator \eqref{dASq:operator}.
	
	Here we want to complete the picture considering the case $m>0$, namely the operator
	\begin{equation}
		\label{pseudo:mag:Sch:order:s}
		\begin{split}
			(-\Delta + m^2)_A^s u(x)
			&:= C_sm^{\frac{3+2s}{2}} \lim_{\eps \searrow 0}\int_{B^c_{\eps}(x)}\frac{u(x) - \e^{i(x-y)\cdot A\left(\frac{x+y}{2}\right)}u(y)}{|x-y|^{\frac{3+2s}{2}}} \mathcal{K}_{\frac{3+2s}{2}}(m|x-y|) \dd y \\
			& \qquad
			+ m^{2s}u(x),
			\qquad
			x\in\R^3.
		\end{split}
	\end{equation}
	
	The results presented in this paper focus mainly on three directions.
	
	Our first purpose is to define a functional setting for the operator \eqref{pseudo:mag:Sch:order:s} and give some properties that will be useful in the rest of the paper (see Section \ref{Functional:setting:sec}).
	
	Then, we consider the behavior of \eqref{pseudo:mag:Sch:order:s} and of the norm defined in Section \ref{Functional:setting:sec} as the \textit{fractional exponent} $s\nearrow 1$.\\
	In particular, in the spirit of \cite{BoBrMi,SqVo}, if
	\[
	H^1_A(\R^3,\C) := \left\{u \in L^2(\R^3,\C) : \left(\int_{\R^3} \left|(\nabla -iA)u\right|^2  \dd x \right)^{\frac12} < +\infty\right\},
	\]
	we prove
	\begin{Th}
		\label{main:result:BBM:bounded}
		Let $\Omega \subset \R^3$ be an open bounded set with Lipschitz boundary and $A \in C^2(\overline{\Omega},\R^3)$. Then, for every $u \in H^1_A(\Omega,\C)$  
		\begin{equation*}
			\lim_{s \nearrow 1}\frac{C_s}{2}m^{\frac{3+2s}{2}}\int_{\Omega \times \Omega} \frac{\left|u(x)-\e^{i(x-y)\cdot A\left(\frac{x+y}{2}\right)}u(y) \right|^2}{|x-y|^{\frac{3+2s}{2}}}\mathcal{K}_{\frac{3+2s}{2}}(m|x-y|) \dd x \dd y = \int_{\Omega}|\nabla_Au|^2 \dd x.
		\end{equation*}
	\end{Th}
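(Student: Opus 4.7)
I would reduce the claim to the non-magnetic BBM-type limit \eqref{AE:bounded} via the algebraic decomposition
$$u(x)-e^{i\alpha(x,y)}u(y)=[u(x)-u(y)]-\bigl[e^{i\alpha(x,y)}-1\bigr]u(y), \qquad \alpha(x,y):=(x-y)\cdot A\!\left(\tfrac{x+y}{2}\right),$$
which yields
\begin{align*}
\bigl|u(x)-e^{i\alpha}u(y)\bigr|^2 &= |u(x)-u(y)|^2 + \bigl|e^{i\alpha}-1\bigr|^2|u(y)|^2 \\
&\quad -2\,\mathrm{Re}\Bigl\{[u(x)-u(y)]\,\overline{\bigl[e^{i\alpha}-1\bigr]u(y)}\Bigr\}.
\end{align*}
Setting $w_s(z):=\tfrac{C_s}{2}m^{(3+2s)/2}|z|^{-(3+2s)/2}\mathcal{K}_{(3+2s)/2}(m|z|)$, multiplying this identity by $w_s(x-y)$, and integrating over $\Omega\times\Omega$, I would identify the limits of the three resulting pieces as $\int_\Omega|\nabla u|^2\dd x$, $\int_\Omega|A|^2|u|^2\dd x$, and $-2\int_\Omega A\cdot\mathrm{Im}(\overline u\,\nabla u)\dd x$, respectively. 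Their sum equals $\int_\Omega|\nabla_A u|^2\dd x$ in view of the identity $|\nabla_A u|^2=|\nabla u|^2-2A\cdot\mathrm{Im}(\overline u\,\nabla u)+|A|^2|u|^2$.

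\textbf{Execution.} The first piece is precisely \eqref{AE:bounded}. For the other two, I would first reduce by density to $u\in C^2(\overline{\Omega},\C)$, with the uniform-in-$s$ dominating bound described below. For such $u$, Taylor expansion gives
$$u(x)-u(y)=\nabla u(x)\cdot(x-y)+O(|x-y|^2), \quad e^{i\alpha(x,y)}-1=i(x-y)\cdot A(x)+O(|x-y|^2),$$
so that each of the two remaining pieces reduces, modulo controlled remainders, to a quadratic polynomial in $(x-y)$ integrated against the radial weight $w_s$. The resulting limits are computed through the BBM-type identity
$$\lim_{s\nearrow 1}\int_{\R^3}z_iz_j\,w_s(z)\dd z=\delta_{ij},$$
which follows from the radial symmetry of $w_s$ and the scalar relation $\lim_{s\nearrow 1}\int_{\R^3}|z|^2 w_s(z)\dd z=3$ (a standard computation using the asymptotics of $\mathcal{K}_{(3+2s)/2}$ together with \eqref{pse:rel:mag:con}).

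\textbf{Main obstacle.} The principal technical hurdle is the uniform-in-$s$ control of the Taylor remainders, coupled with the density argument that extends the result from $C^2(\overline{\Omega},\C)$ to the full space $H^1_A(\Omega,\C)$. The latter requires a uniform-in-$s$ bound on the integrand by a multiple of $\|u\|_{H^1_A(\Omega)}^2$, obtained by combining the decomposition above with \eqref{AE:bounded} and the elementary estimate $|e^{i\alpha}-1|\le\|A\|_{L^\infty(\Omega)}|x-y|$. The former exploits the exponential decay of $\mathcal{K}_{(3+2s)/2}$ at infinity (which, given the boundedness of $\Omega$, localises the analysis to a neighbourhood of the diagonal) and the balance between the singular behaviour of $w_s$ near the origin and the prefactor $C_s=O(1-s)$ from \eqref{pse:rel:mag:con}. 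A gauge-theoretic subtlety to respect throughout is the symmetric midpoint evaluation $A\bigl(\tfrac{x+y}{2}\bigr)$: its antisymmetry $\alpha(x,y)=-\alpha(y,x)$ is what forces the cross term to yield the gauge-covariant quantity $A\cdot\mathrm{Im}(\overline u\,\nabla u)$ in the limit.
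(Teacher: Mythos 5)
Your argument is correct, but takes a genuinely different route from the paper's. The paper delegates the entire convergence to the general magnetic Bourgain--Brezis--Mironescu theorem of Squassina and Volzone (Theorem \ref{Th:2.5:SqVo}), which accepts arbitrary radial mollifier sequences $\{\rho_n\}$: the proof then consists of the single choice $\rho_n(r)=\tfrac{2\pi}{3}C_{s_n}m^{(3+2s_n)/2}r^{(1-2s_n)/2}\mathcal{K}_{(3+2s_n)/2}(mr)\,\psi(r)$ (with $\psi$ a cut-off for integrability) and the verification of the normalization and concentration hypotheses of that theorem, culminating in the identity $I_{1,n}=s_nm^{2s_n-2}\to 1$ of \eqref{I1:integral} and the estimate $I_{2,n}\to 0$ of \eqref{I2:integral}. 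You, by contrast, bypass the magnetic BBM theorem altogether: you split the magnetic difference quotient into the pure Gagliardo piece (handled by the non-magnetic Ambrosio--Essebei limit \eqref{AE:bounded}), a potential piece, and a cross term, and compute the latter two directly via Taylor expansion against your weight $w_s$ and the moment identity $\lim_{s\nearrow1}\int_{\R^3}z_iz_j\,w_s(z)\dd z=\delta_{ij}$. Observe that this moment identity is exactly the computation $I_{1,n}\to1$ from \eqref{I1:integral}, so the analytic core of the two proofs coincides; your route in effect re-proves the relevant special case of Theorem \ref{Th:2.5:SqVo} rather than invoking it, giving a more self-contained but longer argument. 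The technical points you flag are sound: the odd third moments of $w_s$ vanish by radial symmetry, the $O(|z|^3)$ and $O(|z|^4)$ moments vanish in the limit because $C_s\to0$, and each of your three pieces is bounded uniformly in $s$ near $1$ by $C\|u\|_{H^1_A(\Omega)}^2$ via $|e^{i\alpha}-1|\le\|A\|_{L^\infty}|x-y|$ and $\int_{\R^3}|z|^2w_s(z)\dd z=3sm^{2s-2}$, so the density reduction from $C^2(\overline{\Omega},\C)$ to $H^1_A(\Omega,\C)$ closes with a standard $\varepsilon/3$ argument on the quadratic forms.
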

	\noindent 
	Following \cite{NgPiSqVe}, we observe that a similar result can be found also in the whole space setting.
	\begin{Th}
		\label{main:result:BBM:unbounded}
		Let $A:\R^3 \to \R^3$ be Lipschitz. Then, for every $u \in H^1_A(\R^3,\C)$  
		\begin{equation*}
			\lim_{s \nearrow 1}\frac{C_s}{2}m^{\frac{3+2s}{2}}\int_{\R^3 \times \R^3} \frac{\left|u(x)-\e^{i(x-y)\cdot A\left(\frac{x+y}{2}\right)}u(y) \right|^2}{|x-y|^{\frac{3+2s}{2}}}\mathcal{K}_{\frac{3+2s}{2}}(m|x-y|) \dd x \dd y 
			= 
			\int_{\R^3}|\nabla_Au|^2 \dd x.
		\end{equation*}
	\end{Th}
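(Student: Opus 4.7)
The plan is to deduce Theorem~\ref{main:result:BBM:unbounded} from the bounded-domain version Theorem~\ref{main:result:BBM:bounded} by an exhaustion argument, the key feature being that the exponential decay $\bessel{\nu}{r} \lesssim r^{-1/2}\e^{-r/2}$ of the Bessel kernel at infinity confines the double integral near the diagonal. Denote by $E_s(u)$ the left-hand side of Theorem~\ref{main:result:BBM:unbounded} and set $F(u) := \int_{\R^3}|\nabla_A u|^2 \dd x$. Fix a cutoff $\chi_R \in C^\infty_c(\R^3)$ with $\chi_R \equiv 1$ on $B_R$, $\supp \chi_R \subset B_{2R}$ and $|\nabla \chi_R| \leq 2/R$, and set $u_R := \chi_R u$. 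The magnetic product rule $\nabla_A u_R = \chi_R \nabla_A u + u \nabla \chi_R$ together with dominated convergence give $u_R \to u$ in $H^1_A(\R^3,\C)$, whence $F(u_R) \to F(u)$. Consequently the theorem will follow from the two facts (a) $\lim_{s \nearrow 1} E_s(u_R) = F(u_R)$ for each fixed $R$, and (b) $\lim_{R\to\infty}\limsup_{s\nearrow 1}|E_s(u) - E_s(u_R)| = 0$.

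For (a), split $E_s(u_R) = E_s^{B_{3R}}(u_R) + T_{s,R}$, the first integral being taken over $B_{3R}\times B_{3R}$. In the tail $T_{s,R}$, since $\supp u_R \subset B_{2R}$ at least one of $u_R(x), u_R(y)$ vanishes while the remaining pair has $|x-y|\geq R$; the elementary bound $|u_R(x) - \e^{i(x-y)\cdot A\left(\frac{x+y}{2}\right)} u_R(y)|^2 \leq 2(|u_R(x)|^2 + |u_R(y)|^2)$ combined with the large-$r$ asymptotics of $\bessel{\nu}{\cdot}$ and the fact that $C_s = O(1-s)$ as $s \nearrow 1$ yields $T_{s,R} \leq C\,C_s\|u\|_2^2\,\e^{-mR/4}$, which vanishes as $s\nearrow 1$. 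For the main piece $E_s^{B_{3R}}(u_R)$ I apply Theorem~\ref{main:result:BBM:bounded} on $\Omega = B_{3R}$; since that statement requires $A \in C^2(\overline{\Omega})$ while here $A$ is only Lipschitz, this is handled by a standard mollification $A_\delta \to A$ in $C^0(\overline{B_{3R}})$, using $|\e^{it}-\e^{it'}|\leq|t-t'|$ to pass to the limit $\delta \searrow 0$ both in the integrand of $E_s^{B_{3R}}$ and in $\int_{B_{3R}}|\nabla_{A_\delta} u_R|^2 \dd x$.

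For (b), setting $v_R := (1-\chi_R)u$ and expanding $|u(x) - \gamma u(y)|^2$ with $\gamma = \e^{i(x-y)\cdot A\left(\frac{x+y}{2}\right)}$, Cauchy--Schwarz applied to the cross term gives $|E_s(u) - E_s(u_R)| \leq E_s(v_R) + 2\sqrt{E_s(u_R) E_s(v_R)}$. Since $E_s(u_R)$ is bounded uniformly in $s$ by step (a), the task reduces to showing $\limsup_{s\nearrow 1} E_s(v_R) \to 0$ as $R \to \infty$. Bounding $|v(x) - \gamma v(y)|^2 \leq 2|v(x)-v(y)|^2 + 2|v(y)|^2 |x-y|^2 |A((x+y)/2)|^2$, the first summand is handled by the scalar result \eqref{AE:unbounded} applied to $v_R$, while the Lipschitz growth $|A(y+h/2)|^2 \lesssim |A(y)|^2 + |h|^2$ reduces the magnetic correction, after the change of variables $h = x-y$, to explicit Bessel moments $\int_0^\infty r^{\alpha}\bessel{(3+2s)/2}{mr}\dd r$ admitting closed-form evaluation and a uniform-in-$s$ bound; together these yield $\limsup_{s\nearrow 1}E_s(v_R) \lesssim \|\nabla v_R\|_2^2 + \||A| v_R\|_2^2$. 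The principal obstacle I foresee is exactly this last step: for a Lipschitz (hence possibly unbounded) $A$, $\|\nabla v_R\|_2$ and $\||A|v_R\|_2$ need not be individually controlled by the $H^1_A$-norm, so one must exploit the explicit cutoff form $v_R = (1-\chi_R)u$ and the magnetic product rule to re-express $\nabla v_R$ and $Av_R$ in terms of $\nabla_A u$, $u$ and $\nabla \chi_R$, so that the desired vanishing follows from the localization of $v_R$ in $\{|x|\geq R\}$ together with the a-priori assumption $u \in H^1_A(\R^3,\C)$ and dominated convergence.
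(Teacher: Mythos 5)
Your route is genuinely different from the paper's. The paper does not perform any exhaustion: it quotes the whole-space magnetic Bourgain--Brezis--Mironescu theorem of Nguyen--Pinamonti--Squassina--Vecchi (stated as Theorem~\ref{Th:1.1:NgPiSqVe}), and the entire proof consists of exhibiting the explicit mollifier
$\rho_n(r)=\tfrac{2\pi}{3}\tfrac{C_{s_n}}{s_n}m^{\frac{7-2s_n}{2}}r^{\frac{1-2s_n}{2}}\mathcal{K}_{\frac{3+2s_n}{2}}(mr)$
and checking, via the Bessel identity \eqref{integral:repr:Bessel} and the decay estimates \eqref{Gauntest2}, \eqref{Bessel:bound}, that it satisfies the mollifier hypotheses \eqref{rho:hyp:1:bis}--\eqref{rho:hyp:2:bis}; the normalization constant $m^{(7-2s_n)/2}/s_n$ is chosen precisely to make the integral equal to $1$ exactly. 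You are instead attempting to re-prove the unbounded-domain theorem from the bounded one, which is the non-trivial step that \cite{NgPiSqVe} already carries out and that the paper deliberately delegates.

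The main gap is in your step (b), and it is exactly the one you flag in your last paragraph: you never close it. The bound
$|v(x)-\gamma v(y)|^2\le 2|v(x)-v(y)|^2+2|v(y)|^2|x-y|^2|A(\tfrac{x+y}{2})|^2$
requires, on the one hand, the scalar result \eqref{AE:unbounded} for $v_R$, and, on the other, control of $\||A|v_R\|_{L^2}$. Both demand $\nabla v_R\in L^2(\R^3)$ and $Av_R\in L^2(\R^3)$. For a Lipschitz (hence possibly linearly growing) potential $A$, neither quantity is finite for a general $u\in H^1_A(\R^3,\C)$: the space is defined only through $\nabla_A u=\nabla u-iAu\in L^2$, and the two summands need not separately lie in $L^2$. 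The product-rule rewriting you suggest gives $\nabla v_R=(1-\chi_R)\nabla_A u+i(1-\chi_R)Au-u\nabla\chi_R$; the middle term is supported on all of $\{|x|\ge R\}$, where $A$ is unbounded, and there is no a priori bound on it. So the decomposition you split the seminorm with is not available. Controlling the magnetic Gagliardo seminorm of $v_R$ uniformly in $s$ directly by $\|\nabla_A v_R\|_{L^2}^2$ (rather than by $\|\nabla v_R\|_2^2+\||A|v_R\|_2^2$) would work, but that uniform estimate is precisely one of the technical cores of \cite{NgPiSqVe} and cannot be taken for granted.

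A smaller, fixable point in step (a): passing $A_\delta\to A$ in $E_s^{B_{3R}}$ uniformly in $s$ as $s\nearrow 1$ requires a quadratic estimate of the form $||a|^2-|b|^2|\le\eta|b|^2+(1+\eta^{-1})|a-b|^2$ together with $|a-b|\le|u_R(y)|\,|x-y|\,\|A-A_\delta\|_{L^\infty(B_{3R})}$; the resulting Bessel integral $\int_{B_{6R}}|z|^{-(1+2s)}\dd z\sim (1-s)^{-1}$ is then compensated by $C_s\sim 1-s$. The cruder bound $|a-b|(|a|+|b|)$ combined with $2(|u_R(x)|+|u_R(y)|)$ leads to a kernel $|z|^{-(2+2s)}$ that is \emph{not} integrable in $\R^3$ for $s\ge 1/2$, so the choice of inequality matters.
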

	
	Moreover, in the spirit of \cite[Proposition 4.4]{DiPaVa} (see also \cite{AmBuMePe}), we provide the asymptotic behavior of \eqref{pseudo:mag:Sch:order:s} as $s$ goes to $1$, namely 
	\begin{Th}
		\label{ABMP:Th}
		Let $A \in C^2(\R^3,\C)$, $m>0$. For every $u \in C^{\infty}_c(\R^3,\C)$ it holds
		\begin{equation*}
			\lim_{s \nearrow 1} (-\Delta + m^2)_A^su = (-\Delta_A + m^2)u
			\quad \text{ in }\R^3.
		\end{equation*}
	\end{Th}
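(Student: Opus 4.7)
The plan is to establish the pointwise convergence by splitting
\[
(-\Delta + m^2)^s_A u(x) = \cJ_s(u)(x) + m^{2s} u(x),
\]
where $\cJ_s(u)(x)$ denotes the principal-value integral in \eqref{pseudo:mag:Sch:order:s}. Since $m^{2s}u(x)\to m^2 u(x)$ trivially, everything reduces to proving $\cJ_s(u)(x)\to -\Delta_A u(x)$. First, I would exploit the radiality of the Bessel kernel $\mu_s^m$ and symmetrize in $z\to -z$, obtaining the absolutely convergent representation
\[
\cJ_s(u)(x) = \frac{C_s m^{(3+2s)/2}}{2}\int_{\R^3}\frac{2u(x)-\phi_x(z)-\phi_x(-z)}{|z|^{(3+2s)/2}}\,\bessel{(3+2s)/2}{m|z|}\dd z,
\]
where $\phi_x(z) := \e^{-iz\cdot A(x+z/2)}u(x+z)$. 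Under the assumptions $u\in C^\infty_c$ and $A\in C^2$, the map $\phi_x$ is $C^2$ in $z$.

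Second, I would Taylor expand. A direct computation yields $\phi_x(0)=u(x)$ and, at the point $x$,
\[
\partial^2_{kl}\phi_x(0) = \partial^2_{kl}u - i(A_k\partial_l+A_l\partial_k)u - \tfrac{i}{2}(\partial_k A_l+\partial_l A_k)u - A_k A_l u,
\]
whose trace collapses, consistently with \eqref{local:magn:Laplacian}, to $\Delta_z\phi_x(0)=\Delta_A u(x)$. Since symmetrization kills the odd Taylor terms, one has
\[
2u(x)-\phi_x(z)-\phi_x(-z) = -\, z^T D^2\phi_x(0)\, z + R(x,z),\qquad |R(x,z)|\le |z|^2\omega_x(|z|),
\]
with $\omega_x(r)\to 0$ as $r\to 0^+$ by continuity of $D^2\phi_x$. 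Applying the radial identity $\int z_j z_k f(|z|)\dd z = (\delta_{jk}/3)\int|z|^2 f(|z|)\dd z$ to the leading quadratic term gives
\[
-\frac{\Delta_A u(x)}{6}\int_{\R^3}|z|^2 \mu_s^m(z)\,\dd z.
\]
The final integral I would evaluate via the Mellin-Barnes identity $\int_0^\infty t^{\mu-1}\bessel{\nu}{t}\dd t = 2^{\mu-2}\Gamma\!\left(\tfrac{\mu-\nu}{2}\right)\Gamma\!\left(\tfrac{\mu+\nu}{2}\right)$ with $\mu=(7-2s)/2$ and $\nu=(3+2s)/2$; combining with the explicit $C_s$ in \eqref{pse:rel:mag:con} and $\Gamma(5/2)=3\sqrt{\pi}/4$, all the $s$-dependent factors collapse to
\[
\int_{\R^3}|z|^2\mu_s^m(z)\,\dd z = 6 s\, m^{2s-2}\xrightarrow[s\nearrow 1]{} 6,
\]
so that the leading term converges to $-\Delta_A u(x)$.

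Finally, I would bound the two error contributions. For $|z|\ge 1$, the exponential decay of $\bessel{(3+2s)/2}{\cdot}$ at infinity together with the uniform bound $|2u(x)-\phi_x(z)-\phi_x(-z)|\le 3\|u\|_\infty$ (since $u$ is compactly supported) makes the outer piece of order $C_s\sim(1-s)\sqrt{2}/\pi^{3/2}$, which vanishes. For $|z|\le 1$, the near-origin asymptotics $\bessel{\nu}{t}\sim 2^{\nu-1}\Gamma(\nu)t^{-\nu}$ give $\mu_s^m(z)\lesssim|z|^{-(3+2s)}$, so the remainder estimate reduces to controlling $C_s\int_0^1 r^{1-2s}\omega_x(r)\,\dd r$; splitting at a small radius $\delta$ and using that $C_s/(2-2s)$ stays bounded while $\omega_x(\delta)\to 0$, this is seen to be $o(1)$ as $s\nearrow 1$. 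The most delicate step is the algebraic collapse producing $6 s\, m^{2s-2}$: the powers of $2$, the $m$-scaling from the substitution $t=m|z|$, and the $\Gamma(1-s)$ hidden in $C_s$ must all cancel precisely for the limit to reproduce $-\Delta_A u$. Once this identity is settled, the remaining analysis follows the blueprint of \cite[Proposition 4.4]{DiPaVa} with the Bessel kernel replacing the pure Riesz weight.
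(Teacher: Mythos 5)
Your proposal follows the same path as the paper's proof: the absolutely convergent symmetrized representation (the content of Proposition \ref{remove:singularity}), the second-order Taylor expansion of $\phi_x(z)=\e^{-iz\cdot A\left(x+\frac{z}{2}\right)}u(x+z)$ about $z=0$, the trace identity $\operatorname{tr}D^2\phi_x(0)=\Delta_A u(x)$, the split of the integral at $|z|=1$ with the outer piece killed by $C_s\to 0$, and the exact evaluation $\int_{\R^3}|z|^2\mu_s^m(z)\,\dd z=6\,s\,m^{2s-2}$ driving the limit. Two technical deviations are worth noting. You evaluate the radial Bessel integral via the Mellin--Barnes identity, while the paper unfolds the representation \eqref{integral:repr:Bessel} and applies Fubini; both routes give the same factor $s\,m^{2s-2}$. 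More substantively, you control the near-origin Taylor remainder by $|z|^2\omega_x(|z|)$ with $\omega_x$ a modulus of continuity of $D^2\phi_x$, and close with an $\eps$--$\delta$ argument using the boundedness of $C_s/(1-s)$. The paper instead asserts a uniform $O(|z|^3)$ bound on $B_1(0)$; but since $A$ is only assumed $C^2$, the map $z\mapsto\phi_x(z)$ is $C^2$ and not a priori $C^{2,1}$, so the $|z|^3$ estimate does not follow from the second-order expansion stated there (which gives only $o(|z|^2)$). Your modulus-of-continuity version is the more careful of the two and is exactly what is needed; the final conclusion is unaffected either way.
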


	Later, in Section \ref{section:pointwise:representation}, we give a suitable definition of weak solution and we show that {\em smooth} weak solutions of
	\[
	(-\Delta + m^2)_A^s u= f \quad \text{ in } \R^3
	\]
	solve it pointwise a.e. in $\R^3$, for suitable potentials $A$'s and right hand sides $f$'s (cf. Theorem \ref{th:pointwise:representation}).
	
	Finally, the last part of the paper is devoted to some applications of classical variational methods in the setting defined in Section \ref{Functional:setting:sec} that allow to solve some nonlinear equations involving \eqref{pseudo:mag:Sch:order:s}.\\
	More specifically, under suitable assumptions on the potential $A$, we  find \textit{ground state} solutions to the semilinear equation involving a power type nonlinearity
	\begin{equation}
		\label{eq:semilinear}
		(-\Delta + m^2)_A^su = |u|^{p-2}u \quad \text{ in } \R^3,
	\end{equation}
	where $s \in (0,1)$, $p \in (2,2^*_s)$ and $2^*_s = 6/(3-2s)$ is the fractional critical Sobolev exponent for $N=3$, and to the Choquard type equation
	\begin{equation}
		\label{eq:Choquard}
		(-\Delta + m^2)_A^su = (I_{\alpha} * |u|^p)|u|^{p-2}u \quad  \text{ in } \R^3,
	\end{equation}
	for $p \in (1+\alpha/3,(3+\alpha)(3-2s))$, where $I_{\alpha}:\R^3 \to \R$ is the Riesz potential defined as
	\[
	I_{\alpha}(x)=\pi^{3/2}2^{\alpha}\frac{\Gamma\left(\frac{\alpha}{2}\right)}{\Gamma\left(\frac{3-\alpha}{2}\right)}\frac{1}{|x|^{3-\alpha}}, \quad \alpha \in (0,3).
	\]
	
	The solutions will be found by solving suitable minimization problems, both for \eqref{eq:semilinear} and \eqref{eq:Choquard}, in symmetric and general settings (see Section \ref{Applications}).
	
	Starting from the seminal paper \cite{EstebanLions}, that involves the {\em local} operator \eqref{local:magn:Laplacian} (see also \cite{ArioliSzulkin2003,CiClSe2012,BoNyVS2019} and references therein), nonlinear equations involving magnetic operators has been widely studied by variational methods. In particular, in the last years, a great attention has been devoted to equations involving the fractional magnetic operator \eqref{dASq:operator} introduced in \cite{dASq}, in many different cases (see e.g. \cite{AffiliValdinoci,Ambrosio2019-3,FiPiVe} and references therein). Also in the pseudorelativistic case the literature is wide both for $s=1/2$ (see e.g. \cite{BieganowskiSecchi2019,BeBiSe2022,ChoiSeok,CiSe2015-2,CotiZelatiNolasco2011,GuoZeng,Lenzmann2009} and references therein) and for $s \in (0,1)$ (see \cite{Ambrosio2016,BuenoMiyagakiPereira,FaFe,Secchi2019} and references therein). Very few papers has been devoted to the magnetic case $m>0$. Here we mention \cite{CiSe2018} for $s=1/2$.
	
	As far as we know, this paper represents the first approach to this type of pseudodifferential magnetic operators for a general $s\in(0,1)$.\\
	With respect to the existing literature, the main technical differences are the following.\\
	Of course, comparing the operator \eqref{pseudo:mag:Sch:order:s} with \eqref{pseudo:nonmagn:s:fourier}, we highlight the absence of a definition through the Fourier transform, which sometimes is very important (see \cite{FaFe}) and the presence of the magnetic potential $A$ seems to require to consider only the definitions through the integral representation, which is analogous to \eqref{pseudo:nonmagn:s}.\\
	In addition, the presence in \eqref{pseudo:mag:Sch:order:s} of the Bessel function\footnote{For completeness, we report in Appendix \ref{Appendix:Bessel} the properties of the Bessel function we will use throughout the paper.}, that does not appear in \eqref{dASq:operator}, adds a further element of complexity.
	
	We conclude observing that, for the case $s=1/2$, Cingolani and Secchi in \cite{CiSe2018} considered a magnetic relativistic Hartree (or Choquard) type equation. Their approach differs from ours, indeed they defined the operator as the square root of the {\em local} magnetic case in the same functional setting by \cite{dASq}, and they obtained existence results in a symmetric setting for $A$ continuous and bounded or linear passing from a Caffarelli-Silvestre type extension, i.e. a local realization of the square root of the magnetic laplacian to a local elliptic operator with Neumann boundary condition on a half-space.

	In what follows, $C$ denotes a generic positive constant which may vary from one line to another. Moreover, since we are interested in the physical case, namely in three-dimensional space, as done in this introduction, when we will recall results already proved for an arbitrary dimension $N$, we will report their statements for $N=3$.

	\section{Functional setting}
	\label{Functional:setting:sec}
	
	This section is devoted to our functional setting and it is divided into three subsections. Following \cite{dASq}, in the first one we introduce  the fractional Sobolev space $H^s_A(\R^3,\C)$ in which we {\em define} the operator \eqref{pseudo:mag:Sch:order:s}. Then, we provide some features of spaces without a magnetic field. These results will be necessary for the last subsection, where we prove properties for the space $H^s_A(\R^3,\C)$ which will be used in the rest of the paper.
	
	\subsection{The space \texorpdfstring{$H^s_A(\R^3,\C)$}{HsA(R3,C)}}
	Let $L^2(\R^3,\C)$ be the Lebesgue space of complex-valued functions endowed with the inner product
	\[
	(u,v)_{L^2(\R^3,\C)} = \Re\int_{\R^3} u(x)\overline{v(x)} \dd x.
	\]
	Moreover, if $A:\R^3 \to \R^3$ is a continuous field, let us consider 
	\[
	\cH:=\left\{u:\R^3 \to \C : (u,u)_{A,s} < +\infty \right\}
	\]
	where 
	\begin{equation}
		\label{scalar:product}
		(u,v)_{A,s}:= \frac{C_s}{2}m^{\frac{3+2s}{2}} \langle u,v\rangle_{A,s} + m^{2s} (u,v)_{L^2(\R^3,\C)}
	\end{equation}
	and
	\[
	\langle u,v\rangle_{A,s}
	:=
	\Re\int_{\R^3 \times \R^3} \frac{\left(\e^{-i(x-y)\cdot A\left(\frac{x+y}{2}\right)}u(x)-u(y) \right)\overline{\left(\e^{-i(x-y)\cdot A\left(\frac{x+y}{2}\right)}v(x)-v(y) \right)}}{|x-y|^{\frac{3+2s}{2}}}\mathcal{K}_{\frac{3+2s}{2}}(m|x-y|) \dd x \dd y.
	\]
	\\
	Clearly, \eqref{scalar:product} is a real scalar product on $\cH$ and, hence,
	\begin{equation}
		\label{magn:norm}
		\|u\|_{A,s}:=\sqrt{(u,u)_{A,s}^2}
	\end{equation}
	is a norm.
	\\
	In the following, we will also use, for simplicity, the notation
	\begin{equation}
		\label{Gagliardo:seminorm}
		[u]_{A,s}:=\sqrt{\langle u,u \rangle^2_{A,s}}.
	\end{equation}
	We can refer to \eqref{Gagliardo:seminorm} as the \textit{pseudorelativistic magnetic Gagliardo seminorm}.
	
	We have
	\begin{Prop}
		\label{Prop:scalar:prod:Hilb:space}
		$(\cH,(\cdot,\cdot)_{A,s})$ is a real Hilbert space.
	\end{Prop}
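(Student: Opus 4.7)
The plan is to verify, first, that $(\cdot,\cdot)_{A,s}$ is really a real inner product on $\cH$, and then to prove completeness by a standard Fatou-type argument as used for Gagliardo-type spaces.

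Bilinearity over $\R$, symmetry, and nonnegativity follow at once from the definition (the integrand giving $\langle\cdot,\cdot\rangle_{A,s}$ is the real part of a product of complex conjugate quantities and the Bessel function $\cK_{(3+2s)/2}(m|\cdot|)$ is positive, cf. Appendix \ref{Appendix:Bessel}); positive definiteness is immediate because $\|u\|_{A,s}^2\geq m^{2s}\|u\|_{L^2(\R^3,\C)}^2$, so $\|u\|_{A,s}=0$ forces $u=0$ a.e. Thus $\|\cdot\|_{A,s}$ is indeed a norm and we only have to establish completeness.

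Let $\seq{u}$ be a Cauchy sequence in $(\cH,\|\cdot\|_{A,s})$. From
\[
\|u_n-u_k\|_{A,s}^2 = \frac{C_s}{2} m^{\frac{3+2s}{2}}\,[u_n-u_k]_{A,s}^2 + m^{2s}\|u_n-u_k\|_{L^2(\R^3,\C)}^2,
\]
it is in particular Cauchy in $L^2(\R^3,\C)$, so there exists $u\in L^2(\R^3,\C)$ with $u_n\to u$ in $L^2$, and, up to a subsequence, $u_n(x)\to u(x)$ for a.e.\ $x\in\R^3$. Consequently, for a.e.\ $(x,y)\in\R^3\times\R^3$,
\[
\e^{-i(x-y)\cdot A\left(\frac{x+y}{2}\right)}u_n(x)-u_n(y)\longrightarrow \e^{-i(x-y)\cdot A\left(\frac{x+y}{2}\right)}u(x)-u(y).
\]
Applying Fatou's lemma to the nonnegative integrand defining $[u_n]_{A,s}^2$ (with the positive weight $|x-y|^{-(3+2s)/2}\cK_{(3+2s)/2}(m|x-y|)$) along this subsequence yields $[u]_{A,s}^2\leq \liminf_{n} [u_n]_{A,s}^2<+\infty$, so $u\in\cH$.

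It remains to show $u_n\to u$ in $\|\cdot\|_{A,s}$. Fix $\eps>0$ and choose $N$ such that $\|u_n-u_k\|_{A,s}^2<\eps$ for all $n,k\geq N$. For each fixed $n\geq N$, apply Fatou's lemma once more along the a.e.-convergent subsequence $u_{k_j}\to u$ to the nonnegative integrand defining $[u_n-u_{k_j}]_{A,s}^2$:
\[
\|u_n-u\|_{A,s}^2 \leq \liminf_{j\to\infty}\|u_n-u_{k_j}\|_{A,s}^2 \leq \eps,
\]
which gives $u_n\to u$ in $\cH$. The main (and only) technical point to handle with care is the linearity in $u$ of the ``magnetic difference'' $\e^{-i(x-y)\cdot A((x+y)/2)}u(x)-u(y)$, which is what allows both Fatou applications to work on the expression for $u_n-u$; beyond that the argument is routine.
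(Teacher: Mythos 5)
Your proof is correct and follows essentially the same route as the paper: pass to the $L^2$ limit of the Cauchy sequence, use a.e.\ convergence (along a subsequence) together with Fatou's lemma on the Gagliardo-type integrand to show the limit lies in $\cH$, and then apply Fatou once more to get convergence in the $\|\cdot\|_{A,s}$ norm. The only cosmetic difference is that you spell out the inner-product axioms, which the paper dismisses as clear.
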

	\begin{proof}
		Let $\{u_n\}_n \subset \cH$ be a Cauchy sequence, that is 
		\begin{equation}
			\label{Cauchy:seq}
			\forall \eps>0 \, \, \exists \bar{n}(\eps) \in \N \text{ such that } \forall n,m \geq \bar{n}(\eps) : \|u_n-u_m\|_{A,s} < \eps.
		\end{equation}
		It follows that $\{u_n\}_n$ is a Cauchy sequence in $L^2(\R^3,\C)$, so that there exists $u \in L^2(\R^3,\C)$ such that $u_n \to u$ in $L^2(\R^3,\C)$ and a.e. in $\R^3$.
		\\
		We claim that $u \in \cH$. From Fatou's Lemma, assuming, without loss of generality, $\eps=1$ in \eqref{Cauchy:seq}, it follows that
		\[
		\frac{C_s}{2}m^{\frac{3+2s}{2}}[u]^2_{A,s} \leq \frac{C_s}{2}m^{\frac{3+2s}{2}}\liminf_n [u_n]^2_{A,s} \leq \liminf_n  \|u_n - u_{\overline{n}(1)}\|^2_{A,s} + \|u_{\overline{n}(1)}\|^2_{A,s} \leq 1 + \|u_{\overline{n}(1)}\|^2_{A,s} < + \infty.
		\]
		Hence, we can conclude by observing that, from \eqref{Cauchy:seq}, for every $\eps>0$ and $n$ large,
		\[
		[u_n - u]_{A,s} \leq \liminf_k [u_n - u_k]_{A,s}< \eps.
		\]
	\end{proof}

	From now on we will assume that $A$ has locally bounded gradient.
	
	The space $\cH$ satisfies also the following property.
	\begin{Prop}
		$C^{\infty}_c(\R^3,\C)$ is a subspace of $\cH$.
	\end{Prop}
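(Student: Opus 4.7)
The statement reduces, for $u\in C^\infty_c(\R^3,\C)$, to showing that the seminorm $[u]_{A,s}$ defined in \eqref{Gagliardo:seminorm} is finite, since the $L^2$ contribution to $(u,u)_{A,s}$ is trivially finite for a compactly supported smooth function. The plan is to split $\R^3\times\R^3$ into the near-diagonal strip $D_1:=\{(x,y):|x-y|<1\}$ and its complement $D_2$, and to estimate the integrand separately, exploiting the two asymptotic regimes of $\mathcal{K}_{(3+2s)/2}$ collected in Appendix \ref{Appendix:Bessel}: namely $\mathcal{K}_\nu(r)\sim c_\nu r^{-\nu}$ as $r\searrow 0$ and $\mathcal{K}_\nu(r)\sim c'_\nu r^{-1/2}\e^{-r}$ as $r\to\infty$.

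On $D_1$, the triangle inequality gives
\[
\left|u(x)-\e^{i(x-y)\cdot A\left(\frac{x+y}{2}\right)}u(y)\right|\leq |u(x)-u(y)|+\left|\e^{i(x-y)\cdot A\left(\frac{x+y}{2}\right)}-1\right||u(y)|,
\]
and both terms are $O(|x-y|)$: the first because $u$ is Lipschitz, the second because $|\e^{i\theta}-1|\leq|\theta|$ and $A$ is locally bounded (having locally bounded gradient by assumption). Since the integrand vanishes unless at least one of $x,y$ lies in $\supp u$, only a bounded subset of $D_1$ actually contributes; there the kernel $\mathcal{K}_{(3+2s)/2}(m|x-y|)/|x-y|^{(3+2s)/2}$ behaves like $|x-y|^{-(3+2s)}$, and multiplying by the squared numerator $\lesssim|x-y|^2$ leaves the singularity $|x-y|^{-1-2s}$, which is locally integrable in $\R^3$ since $s<1$.

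On $D_2$, the exponential decay of $\mathcal{K}_{(3+2s)/2}$ makes the kernel a bounded, $L^1(\R^3)$ function of the displacement $x-y$. Using the crude bound $|u(x)-\e^{i\phi}u(y)|^2\leq 2(|u(x)|^2+|u(y)|^2)$ and Fubini, the $D_2$-contribution is controlled by $\|u\|^2_{L^2(\R^3,\C)}$ times the $L^1$-norm of the kernel in the difference variable. Combining the two estimates yields $[u]_{A,s}<+\infty$, hence $u\in\cH$.

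The main obstacle—and what distinguishes this argument from the analogue in \cite{dASq}—is the presence of the Bessel factor $\mathcal{K}_{(3+2s)/2}$; however, once its two asymptotic regimes are identified, the near-diagonal analysis reduces to the standard Gagliardo-type bound (the combined kernel has exactly the $|x-y|^{-(3+2s)}$ singularity of the fractional magnetic case), while the exponential tail at infinity is benign and in fact improves the far-field integrability.
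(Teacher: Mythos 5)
Your proof is correct and follows essentially the same route as the paper's: both reduce the claim to $[u]_{A,s}<\infty$, split into a near-diagonal and a far regime, and combine the Lipschitz-type bound on the twisted difference near the diagonal, a crude boundedness estimate far from it, and a power-law bound on $\mathcal{K}_{(3+2s)/2}$ to get a convergent integral in the difference variable. The paper packages the two regimes into the single estimate $\min\{|x-y|^2,1\}$ on the squared numerator, restricts to $\supp u\times\R^3$, and uses only the rough bound \eqref{Bessel:bound} for the Bessel factor; you split $\R^3\times\R^3$ along $|x-y|=1$, obtain the near-diagonal $O(|x-y|)$ bound by the triangle inequality rather than by the Lagrange theorem, and invoke the exponential decay of $\mathcal{K}_\nu$ at infinity together with an $L^2$--Fubini argument for the far zone --- cosmetic variations that lead to the same integrability check.
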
 
	\begin{proof}
		It suffices to prove that $[u]_{A,s} < +\infty$ for all $u \in C^{\infty}_c(\R^3,\C)$.
		Let $\supp u$ be the compact support of $u$, therefore
		\begin{multline*}
			\int_{\R^3 \times \R^3} \frac{\left|\e^{-i(x-y)\cdot A\left(\frac{x+y}{2}\right)}u(x)-u(y) \right|^2}{|x-y|^{\frac{3+2s}{2}}}\mathcal{K}_{\frac{3+2s}{2}}(m|x-y|) \dd x \dd y \\
			\leq 2\int_{\supp u \times \R^3} \frac{\left|\e^{-i(x-y)\cdot A\left(\frac{x+y}{2}\right)}u(x)-u(y) \right|^2}{|x-y|^{\frac{3+2s}{2}}}\mathcal{K}_{\frac{3+2s}{2}}(m|x-y|) \dd x \dd y.
		\end{multline*}
		Since the function
		\[
		(x,y) \mapsto \e^{-i(x-y)\cdot A\left(\frac{x+y}{2}\right)}u(x)
		\]
		is bounded on $\supp u \times \R^3$ and $A$ has locally bounded gradient, from the Lagrange Theorem it follows that
		\[
		|\e^{-i(x-y)\cdot A\left(\frac{x+y}{2}\right)}u(x) - u(y)| \leq C|x-y|
		\]
		for any $(x,y) \in \supp u \times \R^3$. Moreover, we have that $$|\e^{-i(x-y)\cdot A\left(\frac{x+y}{2}\right)}u(x) - u(y)| \leq C$$ for every $(x,y) \in \supp u \times \R^3$. Hence, using \eqref{Bessel:bound},
		\begin{align*}
			&\int_{\supp u \times \R^3} \frac{\left|\e^{-i(x-y)\cdot A\left(\frac{x+y}{2}\right)}u(x)-u(y) \right|^2}{|x-y|^{\frac{3+2s}{2}}}\mathcal{K}_{\frac{3+2s}{2}}(m|x-y|) \dd x \dd y \\
			& \qquad \qquad \leq C \int_{\supp u \times \R^3} \frac{\min\{|x-y|^2,1\}}{|x-y|^{\frac{3+2s}{2}}}\mathcal{K}_{\frac{3+2s}{2}}(m|x-y|) \dd x \dd y \\
			& \qquad \qquad \leq C \int_{B_1(0)} \frac{1}{|z|^{\frac{2s-1}{2}}}\mathcal{K}_{\frac{3+2s}{2}}(m|z|) \dd z + C \int_{B^c_1(0)} \frac{1}{|z|^{\frac{3+2s}{2}}}\mathcal{K}_{\frac{3+2s}{2}}(m|z|) \dd z \\
			& \qquad \qquad \leq C \int_{B_1(0)} \frac{1}{|z|^{1+2s}} \dd z + C \int_{B^c_1(0)} \frac{1}{|z|^{3+2s}} \dd z < + \infty.
		\end{align*}
	\end{proof}
	
	Therefore, we define the \textit{pseudorelativistic fractional magnetic Sobolev} space $H^s_{A}(\R^3,\C)$ as the closure of $C^{\infty}_c(\R^3,\C)$ in $\cH$.
	
	\subsection{Some remarks on \texorpdfstring{$H^s$}{Hs} and \texorpdfstring{$H^s_m$}{Hsm}}
	\label{Hs:Hsm:remarks}
	In this subsection, we consider spaces without magnetic fields and we recall some properties of them. In particular, we compare spaces, and the related norms, in the cases of whether the mass is present or not: these facts lead to different definitions of the norms, which however turn out to be equivalent. Here the modified Bessel function plays a fundamental role.
	
	As observed in \cite{FaFe}, for $m>0$, we can consider the usual space $$H^s(\R^3,\R) = \left\{u \in L^2(\R^3,\R) : |\xi|^s\cF u(\xi) \in L^2(\R^3,\R)\right\},$$ equipped with the norm
	\begin{equation}
		\label{real:Hs:norm}
		\begin{split}
			\|u\|_{H^s(\R^3,\R)}
			& = \left(\int_{\R^3} (|\xi|^{2s} + m^{2s}) |\cF u(\xi)|^2 \dd \xi\right)^{\frac12} \\
			& = \left(\frac{\wt{C}_s}{2}\int_{\R^3 \times \R^3} \frac{|u(x)-u(y)|^2}{|x-y|^{3+2s}} \dd x \dd y + m^{2s}\int_{\R^3}|u(x)|^2 \dd x \right)^{\frac12},
		\end{split}
	\end{equation}
	where $\wt{C}_s=s2^{2s}\Gamma\left(\frac{3+2s}{2}\right)/(\pi^{3/2}\Gamma(1-s))$, and the completion $H^s_m(\R^3,\R)$ of $C^{\infty}_c(\R^3,\R)$ with respect to the norm 
	\begin{equation}
		\label{real:Hsm:norm}
		\|u\|_{H^s_m(\R^3,\R)} := \left(\int_{\R^3} (|\xi|^2 + m^2)^s|\cF u(\xi)|^2 \dd \xi\right)^\frac{1}{2}.
	\end{equation}
	Since there exist $C_1,C_2>0$ such that 
	\begin{equation}
		\label{symbols:inequalities}
		C_1(|\xi|^{2s} + m^{2s}) \leq (|\xi|^2 + m^2)^s \leq C_2(|\xi|^{2s} + m^{2s}),
	\end{equation}
	we have that these are two ways to define the same space and the two norms \eqref{real:Hs:norm} and \eqref{real:Hsm:norm} are equivalent. Moreover, by \cite[Proposition 6]{FaFe},
	\begin{equation}
		\label{nomesemplice}
		\|u\|_{H^s_m(\R^3,\R)}
		= \left(\frac{C_s}{2}m^{\frac{3+2s}{2}}\int_{\R^3 \times \R^3} \frac{|u(x)-u(y)|^2}{|x-y|^{\frac{3+2s}{2}}}\mathcal{K}_{\frac{3+2s}{2}}(m|x-y|) \dd x \dd y + m^{2s}\int_{\R^3}|u(x)|^2 \dd x\right)^\frac{1}{2}.
	\end{equation}
	A similar argument can be repeated for
	$$H^s(\R^3,\C) = \left\{u \in L^2(\R^3,\C) : |\xi|^s\cF u(\xi) \in L^2(\R^3,\C)\right\},$$ equipped with the norm
	\begin{equation}
		\label{Hs:norm}
		\begin{split}
			\|u\|_{H^s(\R^3,\C)} &= \left(\int_{\R^3} (|\xi|^{2s} + m^{2s}) |\cF u(\xi)|^2 \dd \xi\right)^{\frac12} \\
			&= \left(\frac{\wt{C}_s}{2}\int_{\R^3 \times \R^3} \frac{|u(x)-u(y)|^2}{|x-y|^{3+2s}} \dd x \dd y + m^{2s}\int_{\R^3}|u(x)|^2 \dd x \right)^{\frac12}
		\end{split}
	\end{equation}
	and for $H^s_m(\R^3,\C)$, the completion of $C^{\infty}_c(\R^3,\C)$ with respect to the norm
	\begin{equation}
		\label{Hsm:norm}
		\|u\|_{H^s_m(\R^3,\C)} := \left(\int_{\R^3} (|\xi|^2 + m^2)^s|\cF{u}(\xi)|^2 \dd \xi\right)^\frac{1}{2}.
	\end{equation}
	Following the arguments of \cite[Proposition 6]{FaFe} we have
	\begin{Prop}
		\label{normHmsC}
		$H^s(\R^3,\C)=H_m^s(\R^3,\C)$, the respective norms \eqref{Hs:norm} and \eqref{Hsm:norm} are equivalent, and, for every $u \in H^s(\R^3,\C)$,
		\begin{equation}
			\label{Hs:equivalent:norm}
			\|u\|_{H^s_m(\R^3,\C)} = \left(\frac{C_s}{2}m^{\frac{3+2s}{2}}\int_{\R^3 \times \R^3} \frac{|u(x)-u(y)|^2}{|x-y|^{\frac{3+2s}{2}}}\mathcal{K}_{\frac{3+2s}{2}}(m|x-y|) \dd x \dd y + m^{2s}\int_{\R^3}|u(x)|^2 \dd x\right)^{\frac12}.
		\end{equation}
	\end{Prop}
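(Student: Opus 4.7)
The plan is to follow verbatim the strategy of \cite[Proposition 6]{FaFe}, noting that the passage from real- to complex-valued functions requires only minor bookkeeping (taking real parts in the Gagliardo symmetrization).

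First I would deduce the equivalence of the spaces and norms from the pointwise comparison of symbols \eqref{symbols:inequalities}: given $u\in L^2(\R^3,\C)$, the two quantities $\int(|\xi|^{2s}+m^{2s})|\cF u|^2\dd\xi$ and $\int(|\xi|^2+m^2)^s|\cF u|^2\dd\xi$ are finite together and comparable, hence the set equality $H^s(\R^3,\C)=H^s_m(\R^3,\C)$ follows once one observes that $C^{\infty}_c(\R^3,\C)$ is dense in $H^s(\R^3,\C)$ (so the completion in the $\|\cdot\|_{H^s_m}$-norm and the intrinsic Fourier definition coincide) and that the norms \eqref{Hs:norm} and \eqref{Hsm:norm} are equivalent.

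Next I would establish \eqref{Hs:equivalent:norm} first for $u\in C^{\infty}_c(\R^3,\C)$. By Plancherel's identity and the self-adjointness of $(-\Delta+m^2)^s$,
\begin{equation*}
\int_{\R^3}(|\xi|^2+m^2)^s|\cF u(\xi)|^2\dd\xi
=\Re\int_{\R^3}\overline{u(x)}\,(-\Delta+m^2)^s u(x)\dd x.
\end{equation*}
Inserting the pointwise representation \eqref{pseudo:nonmagn:s} gives
\begin{equation*}
\Re\int_{\R^3}\overline{u(x)}\,(-\Delta+m^2)^s u(x)\dd x
=m^{2s}\|u\|_{L^2}^2+C_sm^{\frac{3+2s}{2}}\lim_{\eps\searrow 0}\Re\!\!\iint_{|x-y|>\eps}\!\!\overline{u(x)}[u(x)-u(y)]\frac{\cK_{\frac{3+2s}{2}}(m|x-y|)}{|x-y|^{\frac{3+2s}{2}}}\dd x\dd y.
\end{equation*}
Then I would symmetrize: swapping $x\leftrightarrow y$ inside the truncated integral and averaging with the original expression produces the complex-valued analogue of the standard Gagliardo identity,
\begin{equation*}
2\,\Re\!\!\iint_{|x-y|>\eps}\!\!\overline{u(x)}[u(x)-u(y)]K(x-y)\dd x\dd y
=\iint_{|x-y|>\eps}|u(x)-u(y)|^2K(x-y)\dd x\dd y,
\end{equation*}
with $K(z)=\cK_{(3+2s)/2}(m|z|)/|z|^{(3+2s)/2}$. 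For $u\in C^\infty_c(\R^3,\C)$ one has $|u(x)-u(y)|^2\leq C\min\{|x-y|^2,1\}$, which, combined with the near-origin and exponential decay bounds on $\cK_{(3+2s)/2}$ (already used to show $C^\infty_c\subset\cH$ in the previous proposition), makes the right-hand side absolutely convergent. Hence the $\eps\searrow 0$ limit exists as a Lebesgue integral, and \eqref{Hs:equivalent:norm} holds on $C^\infty_c(\R^3,\C)$.

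Finally I would extend to $H^s(\R^3,\C)$ by density: both sides of \eqref{Hs:equivalent:norm} are continuous in the $H^s_m$-norm (the right-hand side is exactly the squared $\cH$-norm for $A\equiv 0$, which is lower semicontinuous and finite by the equivalence of norms), so taking $C^\infty_c$-approximations yields the identity for every $u\in H^s(\R^3,\C)$. The only genuinely delicate point is the symmetrization step, where one must keep the principal-value regularization throughout because the non-symmetrized integrand is only of order $|x-y|^{-(2+2s)}$ near the diagonal and is therefore not absolutely integrable for $s\geq 1/2$; the crucial observation is that the gain of one order of vanishing from $|u(x)-u(y)|^2$ (versus a single difference) lifts the symmetrized integrand into $L^1(\R^3\times\R^3)$, allowing the removal of the cutoff only after symmetrization.
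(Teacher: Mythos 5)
Your proof is correct in outline, but it takes a genuinely different route from the one the paper follows in Appendix~\ref{Appendix:Prop:6}. The paper's proof is an extension-problem argument à la Caffarelli--Silvestre: it introduces the auxiliary function $\theta$ solving the degenerate ODE \eqref{theta:eq}, identifies $\theta(\sqrt{|\xi|^2+m^2}\,t)$ as the Fourier transform of the extension kernel $P_m(\cdot,t)$ in \eqref{Bessel:kernel}, symmetrizes $\int P_m(x-y,t)\,\overline{u(y)}u(x)\,\dd x\,\dd y$, and lets $t\to 0^+$; the Bessel kernel in the Gagliardo term materializes because $P_m$ solves \eqref{ext:whole:space}. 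Your argument instead stays in physical space: Plancherel turns $\|u\|_{H^s_m}^2$ into $\int\overline{u}\,(-\Delta+m^2)^s u$, you substitute the pointwise representation \eqref{pseudo:nonmagn:s}, and you apply the same $x\leftrightarrow y$ symmetrization to the truncated double integral. The common engine in both proofs is that symmetrization, but the two regularize differently: the paper keeps the extension variable $t>0$ strictly positive until the very end, so every intermediate quantity is an honest Lebesgue integral, whereas you carry an $\eps$-cutoff around the diagonal.

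Two points in your write-up deserve a warning. First, passing from $\Re\int\overline{u(x)}\big(\lim_{\eps\searrow 0}g_\eps(x)\big)\dd x$ to $\lim_{\eps\searrow 0}\Re\iint_{|x-y|>\eps}\overline{u(x)}[u(x)-u(y)]K(x-y)\,\dd x\,\dd y$ is an interchange of a principal-value limit with the outer $\dd x$-integration that you use silently; it can be justified for $u\in C^\infty_c$ via a uniform bound $|g_\eps(x)|\le C$ on $\supp u$ (essentially the estimate \eqref{auxiliary:funct:integrability} with $A\equiv 0$), but it should be stated. Second, your proof takes \eqref{pseudo:nonmagn:s} as a black box, whereas the paper's Appendix-B argument derives the identity from the extension problem and is therefore self-contained modulo standard Bessel facts. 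Since \eqref{pseudo:nonmagn:s} is attributed to~\cite[Remark~5]{FaFe} and Proposition~\ref{normHmsC} is itself modeled on~\cite[Proposition~6]{FaFe}, you should verify that the pointwise representation you invoke is established there independently of the norm identity, to avoid circularity. With those two caveats addressed, your approach is a legitimate and somewhat shorter alternative.
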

	The proof of this Proposition is straightforward, and we report it in Appendix \ref{Appendix:Prop:6} for completeness.

	Also in the magnetic case, we could consider a {\em classical} fractional magnetic Sobolev space that does not involve the Bessel function (the one used in \cite{dASq}) equipped with an equivalent norm to 
	\begin{equation}
		\label{possible:equivalent:norm}
		\left(\int_{\R^3 \times \R^3} \frac{\left|\e^{-i(x-y)\cdot A\left(\frac{x+y}{2}\right)}u(x)-u(y) \right|^2}{|x-y|^{3+2s}} \dd x \dd y 
		+ 
		\int_{\R^3}|u|^2 \dd x\right)^\frac12.
	\end{equation}
	However, the equivalence between the norms \eqref{possible:equivalent:norm} and \eqref{magn:norm} seems to be not so straightforward, so we will work directly in the second space.\\
	Moreover, we observe that a similar equivalence as in Proposition \ref{normHmsC} can be repeated in the local setting. Indeed, if $\Omega$ is an open bounded subset of $\R^3$, consider the spaces $H^s(\Omega,\C)$ equipped with the norm
	\begin{equation}
		\label{HsKnorm}
		\|u\|_{H^s(\Omega,\C)}
		:=
		\left(\frac{C_s}{2}\int_{\Omega \times \Omega} \frac{|u(x)-u(y)|^2}{|x-y|^{3+2s}} \dd x \dd y
		+ m^{2s}\int_{\Omega}|u(x)|^2 \dd x \right)^{\frac12}
	\end{equation}
	and
	$H_m^s(\Omega,\C)$ equipped with the norm
	\begin{equation}
		\label{HmsKnorm}
		\|u\|_{H_m^s(\Omega,\C)}
		:=
		\left(\frac{C_s}{2}m^{\frac{3+2s}{2}}\int_{\Omega \times \Omega} \frac{|u(x)-u(y)|^2}{|x-y|^{\frac{3+2s}{2}}}\mathcal{K}_{\frac{3+2s}{2}}(m|x-y|) \dd x \dd y
		+ m^{2s}\int_{\Omega}|u(x)|^2 \dd x\right)^{\frac12}.
	\end{equation}
	We have
	\begin{Lem}
		\label{local:Hs:equivalence}
		The norms \eqref{HsKnorm} and \eqref{HmsKnorm} are equivalent and so $H^s(\Omega,\C)= H_m^s(\Omega,\C)$.
	\end{Lem}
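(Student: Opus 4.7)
The plan is to reduce the equivalence of the two norms to a pointwise equivalence of the integration kernels on the range of $|x-y|$ allowed by the bounded domain, and then to extract that pointwise equivalence from the known asymptotics of the Bessel function $\mathcal{K}_\nu$.

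Since the $L^2(\Omega,\C)$ parts in \eqref{HsKnorm} and \eqref{HmsKnorm} are identical, the issue is exactly the comparison between the Gagliardo-type double integrals. Writing $R:=\diam\Omega$, for $(x,y)\in\Omega\times\Omega$ one has $|x-y|\in(0,R]$. Therefore, after the change of variables $t=m|x-y|\in(0,mR]$, it is enough to prove that there exist constants $0<c_1\le c_2$ (depending on $s$, $m$ and $R$) such that
\begin{equation*}
c_1\,\frac{1}{|x-y|^{3+2s}} \;\le\; m^{\frac{3+2s}{2}}\,\frac{\mathcal{K}_{\frac{3+2s}{2}}(m|x-y|)}{|x-y|^{\frac{3+2s}{2}}} \;\le\; c_2\,\frac{1}{|x-y|^{3+2s}}
\end{equation*}
for every $(x,y)\in\Omega\times\Omega$ with $x\neq y$. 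This is equivalent to proving that the one-variable function
\[
\Phi(t):=t^{\frac{3+2s}{2}}\,\mathcal{K}_{\frac{3+2s}{2}}(t)
\]
is bounded from above and from below by positive constants on $(0,mR]$.

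The key step is then the analysis of $\Phi$ via the Bessel asymptotics recalled in Appendix \ref{Appendix:Bessel}. On the one hand, as $t\searrow 0$ one has $\mathcal{K}_\nu(t)\sim\tfrac{1}{2}\Gamma(\nu)(2/t)^\nu$ for $\nu>0$, so
\[
\lim_{t\searrow 0}\Phi(t)=2^{\frac{3+2s}{2}-1}\,\Gamma\!\left(\tfrac{3+2s}{2}\right)>0.
\]
Hence $\Phi$ extends continuously and strictly positively to $t=0$. On the other hand, $\Phi$ is continuous and strictly positive on $(0,mR]$, so together with the limit at $0$ it admits a positive minimum and a finite maximum on the compact interval $[0,mR]$. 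Taking $c_1,c_2$ equal to these constants multiplied by appropriate powers of $m$ yields the desired two-sided kernel estimate, and therefore the equivalence
\[
\|u\|_{H^s(\Omega,\C)}\;\asymp\;\|u\|_{H_m^s(\Omega,\C)}
\qquad\text{for every }u\in C^\infty_c(\Omega,\C),
\]
and by density of $C^\infty_c(\Omega,\C)$ in both completions we conclude that the two spaces coincide as sets with equivalent norms.

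The main obstacle is merely bookkeeping: one must invoke the near-zero asymptotic of $\mathcal{K}_\nu$ precisely to cancel the singular factor $|x-y|^{-(3+2s)/2}$ and recover the $|x-y|^{-(3+2s)}$ singularity of the fractional Laplacian kernel; the exponential decay of $\mathcal{K}_\nu$ at infinity plays no role here because $\Omega$ is bounded, which is exactly why this elementary comparison works on $\Omega$ and not (directly with the same constants) on all of $\R^3$.
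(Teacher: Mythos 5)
Your proof is correct and takes essentially the same route as the paper: both reduce the statement to a two-sided pointwise bound on the kernel, obtained from the positivity and continuity of $\mathcal{K}_{\frac{3+2s}{2}}$ together with its asymptotics \eqref{Gauntest1} near $0$, and exploit that $|x-y|$ ranges over a bounded interval. Your closing density remark is harmless but unnecessary — once the kernels are pointwise comparable the two quadratic forms are comparable on all measurable functions, so the sets where they are finite coincide without appealing to any density of $C^\infty_c(\Omega,\C)$.
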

	\begin{proof}
		Due to the continuity of $\mathcal{K}_\frac{3+2s}{2}$ for positive arguments, its positivity (see \eqref{bessel:positive}), and its asymptotic behavior in $0$ (see \eqref{Gauntest1}), there exist $C_1,C_2>0$, depending on $\Omega$, such that
		\begin{equation}
			\label{duestim}
			\frac{C_1}{|\zeta|^\frac{3+2s}{2}}
			\leq
			\mathcal{K}_\frac{3+2s}{2}(m|\zeta|)
			\leq
			\frac{C_2}{|\zeta|^\frac{3+2s}{2}}.
		\end{equation}
		and we get the statement immediately.
	\end{proof}

	\begin{Rem}\label{rem:dipR}
		Observe that the dependence on $\Omega$ of $C_1,C_2$ in the proof of Lemma \ref{local:Hs:equivalence} can be {\em reduced} to the dependence on $R>0$ such that $\Omega\subset B_{R}(\xi)$ for an arbitrary $\xi\in\mathbb{R}^3$ since, if $x,y\in\Omega$, $|x-y|<2R$.\footnote{Here we use only \eqref{Gauntest1}. Actually, by \eqref{Bessel:bound}, we can consider directly $C_2$ independent of $\Omega$.}
	\end{Rem}
	
	The following Lemma gives a local embedding result on the space $H^s$, which is well known if we consider the norm \eqref{HsKnorm} (see \cite[Proposition 2.5]{BrPa}). Due to the lack of homogeneity of the modified Bessel function $\mathcal{K}$, we cannot proceed directly but some adjustments are in order.
	
	\begin{Lem}
		\label{Lemma36}
		For every $\xi \in \R^3, R>0$, and $1 \leq q \leq 2^*_s$, there exists $C=C(s,m,R)>0$ such that for every $u \in H^s(B_R(\xi),\C)$,
		\[
		\|u\|_{L^q(B_R(\xi),\C)} \leq C\|u\|_{H^s_m(B_R(\xi),\C)}.
		\]
	\end{Lem}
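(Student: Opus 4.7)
The strategy is to reduce to the classical fractional Sobolev embedding on balls by exchanging the pseudorelativistic norm $\|\cdot\|_{H^s_m(B_R(\xi),\C)}$ for the standard Gagliardo-based norm $\|\cdot\|_{H^s(B_R(\xi),\C)}$, and then invoke \cite[Proposition 2.5]{BrPa}.

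First I would apply Lemma \ref{local:Hs:equivalence} to the ball $\Omega=B_R(\xi)$. Combined with Remark \ref{rem:dipR}, this yields constants $C_1,C_2>0$ that depend only on $R$ (and $s,m$), independent of the center $\xi$, such that
\[
C_1 \|u\|_{H^s(B_R(\xi),\C)} \leq \|u\|_{H^s_m(B_R(\xi),\C)} \leq C_2 \|u\|_{H^s(B_R(\xi),\C)}.
\]
Translation invariance of the Gagliardo seminorm (which appears in \eqref{HsKnorm}) guarantees that the constants are indeed the same for every choice of $\xi$, the bounds $\eqref{duestim}$ being applied on the difference $\zeta = x-y$ with $|\zeta| < 2R$.

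Next I would invoke the classical fractional Sobolev embedding, \cite[Proposition 2.5]{BrPa}, which asserts that for every $1 \leq q \leq 2^*_s$ there is a constant $C'=C'(s,R)>0$ with
\[
\|u\|_{L^q(B_R(\xi),\C)} \leq C' \|u\|_{H^s(B_R(\xi),\C)}
\]
for all $u \in H^s(B_R(\xi),\C)$. Again the constant is independent of $\xi$ by translation invariance.

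Chaining these two inequalities gives
\[
\|u\|_{L^q(B_R(\xi),\C)} \leq C' \|u\|_{H^s(B_R(\xi),\C)} \leq \frac{C'}{C_1} \|u\|_{H^s_m(B_R(\xi),\C)},
\]
which is the desired conclusion with $C := C'/C_1$ depending only on $s$, $m$, and $R$. The only subtle point is the verification that the equivalence constants in Lemma \ref{local:Hs:equivalence} can be made to depend solely on $R$; this is exactly the content of Remark \ref{rem:dipR}, since on $B_R(\xi) \times B_R(\xi)$ one has $|x-y|<2R$, so the bounds on $\mathcal{K}_{(3+2s)/2}(m|x-y|)$ in \eqref{duestim} are uniform in $\xi$.
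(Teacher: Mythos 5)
Your proposal is correct and follows essentially the same route as the paper: invoke the fractional Sobolev embedding from \cite[Proposition 2.5]{BrPa} for the Gagliardo-based norm \eqref{HsKnorm}, and then pass to the norm \eqref{HmsKnorm} via the equivalence in Lemma \ref{local:Hs:equivalence}, using Remark \ref{rem:dipR} to ensure that the equivalence constants depend only on $R$ (not on the center $\xi$) because $|x-y|<2R$ on $B_R(\xi)\times B_R(\xi)$. The only difference is the order in which the two ingredients are presented, which is immaterial.
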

	\begin{proof}
		Let $\xi \in \R^3$, $R>0$, $1 \leq q \leq 2^*_s$, and $u \in H^s(B_R(\xi),\C)$. Considering $H^s(B_R(\xi),\C)$ equipped with the norm \eqref{HsKnorm}, by \cite[Proposition 2.5]{BrPa}, there exists $C=C(s,m,R)>0$ (independent of $\xi$) such that
		\begin{equation}
			\label{crit:norm:estimate}
			\|u\|_{L^q(B_R(\xi),\C)} \leq C \|u\|_{H^s(B_R(\xi),\C)}.
		\end{equation}
		By Lemma \ref{local:Hs:equivalence} we have that the norms \eqref{HsKnorm} and \eqref{HmsKnorm} are equivalent, but, in general, the constant depends on $B_R(\xi)$. However, if $(x,y)\in B_R(\xi) \times B_R(\xi)$, then $|x-y|<2R$, so, arguing as in Remark \ref{rem:dipR}, we can consider constants $C_1, C_2$ independent of $\xi$ in the inequality \eqref{duestim}.
		Therefore, we obtain that in the right-hand side of \eqref{crit:norm:estimate} we can take the norm \eqref{HmsKnorm}, and so we can conclude.
	\end{proof}
	
	Finally, as in \cite[Lemma 2.3]{dASiSq} (see also \cite[Lemma 2.2]{dASiSq}) and using, eventually, the equivalence in Proposition \ref{normHmsC}, we can show the following tool that will be useful in our concentration-compactness arguments.
	\begin{Lem}
		\label{vanishing:lemma}
		Let $\{u_n\}_n \subset H^s(\R^3,\C)$ be a bounded sequence. Assume that for some $R>0$ and $q \in [2,2^*_s)$
		\[
		\lim_n \sup_{\xi \in \R^3}\int_{B_R(\xi)}|u_n(x)|^q \dd x = 0.
		\]
		Then $u_n \to 0$ in $L^p(\R^3,\C)$ for $p \in (2,2^*_s)$.
	\end{Lem}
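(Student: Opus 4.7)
The statement is a P.--L.~Lions--type vanishing lemma for $H^s(\R^3,\C)$, and I would follow the classical covering-plus-interpolation scheme (as in \cite[Lemma~2.3]{dASiSq}), adapted to the Bessel-based norm through Proposition~\ref{normHmsC} and the local embedding Lemma~\ref{Lemma36}. A preliminary reduction lets me assume $q=2$: if $q>2$, H\"older on each ball yields
\[
\int_{B_R(\xi)}|u_n|^2\dd x\le |B_R|^{1-2/q}\Bigl(\int_{B_R(\xi)}|u_n|^q\dd x\Bigr)^{2/q},
\]
so the hypothesis with exponent $q$ transfers to exponent $2$.

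Next I fix $p\in(2,2^*_s)$ and a countable family $\{B_R(\xi_j)\}_j$ covering $\R^3$ with uniform finite overlap $N$ (e.g.\ centers on the lattice $R\Z^3$). Setting $\gamma:=(p-2)/(2^*_s-2)\in(0,1)$ and $\beta:=1-\gamma$, the H\"older inequality $\int h^\beta k^\gamma\le(\int h)^\beta(\int k)^\gamma$ applied to $h=|u_n|^2$, $k=|u_n|^{2^*_s}$ gives
\[
\int_{B_R(\xi_j)}|u_n|^p\dd x\le \Bigl(\int_{B_R(\xi_j)}|u_n|^2\dd x\Bigr)^{\beta}\Bigl(\int_{B_R(\xi_j)}|u_n|^{2^*_s}\dd x\Bigr)^{\gamma}.
\]
Lemma~\ref{Lemma36}, with the $\xi_j$-independent constant afforded by Remark~\ref{rem:dipR}, majorizes the last factor by $C\|u_n\|_{H^s_m(B_R(\xi_j),\C)}^{2^*_s\gamma}$, and the finite overlap together with Proposition~\ref{normHmsC} yields $\sum_j\|u_n\|_{H^s_m(B_R(\xi_j),\C)}^{2}\le C\|u_n\|_{H^s_m(\R^3,\C)}^{2}\le C$, uniformly in $n$.

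To close the sum I set $a_j:=\int_{B_R(\xi_j)}|u_n|^2\dd x$ and $b_j:=\|u_n\|_{H^s_m(B_R(\xi_j),\C)}^{2}$, and I split two regimes. If $\gamma 2^*_s/2\ge 1$, the uniform bound on $b_j$ absorbs the excess power and $\sum_j a_j^{\beta} b_j^{\gamma 2^*_s/2}\le C(\sup_\xi a(\xi))^{\beta}\sum_j b_j$, which tends to $0$. If $\gamma 2^*_s/2<1$, I set $\alpha:=1-\gamma 2^*_s/2\in(0,\beta)$, factor $(\sup_\xi a(\xi))^{\beta-\alpha}$ out of the sum, and apply the weighted Young inequality $a^{\alpha}b^{1-\alpha}\le\alpha a+(1-\alpha)b$ to obtain
\[
\sum_j a_j^{\beta}b_j^{\gamma 2^*_s/2}\le\Bigl(\sup_\xi a(\xi)\Bigr)^{\beta-\alpha}\sum_j\bigl(\alpha a_j+(1-\alpha)b_j\bigr)\le C\Bigl(\sup_\xi a(\xi)\Bigr)^{\beta-\alpha},
\]
since both $\sum_j a_j\le N\|u_n\|_{L^2(\R^3,\C)}^{2}$ and $\sum_j b_j\le N\|u_n\|_{H^s_m(\R^3,\C)}^{2}$ are uniformly bounded. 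In each regime $\|u_n\|_{L^p(\R^3,\C)}^{p}$ is dominated by a positive power of the vanishing supremum, and the conclusion follows. The only mildly delicate point is precisely this bookkeeping: for $p$ close to $2$ the exponent $\gamma 2^*_s/2$ drops below $1$ and a direct domination fails, so the weighted Young splitting is what handles that sub-range.
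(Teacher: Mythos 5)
Your argument is correct and, in spirit, is precisely the covering-and-interpolation scheme that the paper delegates to \cite[Lemma 2.3]{dASiSq} (adapted via Lemma~\ref{Lemma36}, Remark~\ref{rem:dipR}, and Proposition~\ref{normHmsC} so that the local constants are $\xi$-independent and the localized Bessel seminorms sum up against $\|u_n\|_{H^s_m(\R^3,\C)}^2$ after using finite overlap). All the small steps check out: the H\"older reduction to $q=2$, the identity $p=2\beta+2^*_s\gamma$ with $\beta+\gamma=1$, the verification that $\alpha:=1-\gamma 2^*_s/2$ lies strictly below $\beta$ whenever $2^*_s>2$, and the fact that $\sum_j a_j$ and $\sum_j b_j$ are both controlled by the global norms through the bounded multiplicity of the cover.

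Where you deviate, mildly but usefully, from the classical Lions presentation is in the endgame. The textbook version treats the range $p\in(2,2^*_s)$ in two stages: one first proves vanishing for the single exponent $p_0=4(2^*_s-1)/2^*_s$ for which the local Sobolev factor appears with power exactly $2$, then recovers the rest of the interval by $L^2$--$L^{p_0}$ and $L^{p_0}$--$L^{2^*_s}$ interpolation against the bounded norms. Your weighted Young step $a_j^\alpha b_j^{1-\alpha}\le\alpha a_j+(1-\alpha)b_j$ collapses the two stages into one and handles every $p\in(2,2^*_s)$ directly, at the cost of one extra case split on the sign of $\gamma 2^*_s/2-1$. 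This buys a self-contained, one-pass proof with a cleanly quantified rate $\bigl(\sup_\xi a(\xi)\bigr)^{\beta-\alpha}$; what it does not change is the essential ingredient, namely the $\xi$-uniform local embedding into $L^{2^*_s}$, which is exactly where the Bessel-weighted norm needs the uniformity recorded in Remark~\ref{rem:dipR}.
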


	\subsection{Properties of \texorpdfstring{$H^s_A(\R^3,\C)$}{HsA(R3,C)}}
	
	Now, we explore some properties, in particular embedding results, of $H^s_A(\R^3,\C)$.
	
	As usual in the magnetic Sobolev spaces, the diamagnetic inequality is a fundamental tool. In our functional setting, due to the {\em pointwise diamagnetic inequality}
	\begin{equation}
		\label{pointwise:diamagnetic}
		||u(x)|-|u(y)||
		\leq 
		\left|\e^{-i(x-y)\cdot A\left(\frac{x+y}{2}\right)}u(x)-u(y)\right|
		\text{ for a.e. } x,y \in \R^3,
	\end{equation}
	see \cite[Remark 3.2]{dASq}, we easily have
	\begin{Lem}[Diamagnetic inequality]
		\label{diamagnetic:eq}
		For every $u \in H^s_A(\R^3,\C)$ it holds $\||u|\|_{H^s_m(\R^3,\R)} \leq \|u\|_{A,s}$ and so $|u| \in H^s(\R^3,\R)$.
	\end{Lem}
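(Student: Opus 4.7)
The plan is to invoke the pointwise diamagnetic inequality \eqref{pointwise:diamagnetic} directly under the integral sign in $[u]_{A,s}^2$, combine it with the trivial identity $\||u|\|_{L^2(\R^3,\R)} = \|u\|_{L^2(\R^3,\C)}$, and then use the integral representation of the $H^s_m(\R^3,\R)$-norm.

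First I would observe that, for any measurable $u:\R^3 \to \C$ and a.e.\ $x,y \in \R^3$, the inequality
\[
\bigl||u(x)|-|u(y)|\bigr|^{2} \leq \left|\e^{-i(x-y)\cdot A\left(\frac{x+y}{2}\right)}u(x)-u(y)\right|^{2}
\]
holds pointwise (it is the elementary reverse triangle inequality). Since the kernel $|x-y|^{-(3+2s)/2}\mathcal{K}_{(3+2s)/2}(m|x-y|)$ is nonnegative (see \eqref{bessel:positive}), multiplying by it and integrating over $\R^3\times\R^3$ gives
\[
\int_{\R^3\times\R^3} \frac{\bigl||u(x)|-|u(y)|\bigr|^{2}}{|x-y|^{(3+2s)/2}}\,\mathcal{K}_{(3+2s)/2}(m|x-y|)\, \dd x\dd y \;\leq\; [u]_{A,s}^{2}.
\]

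Second, combining this with $\||u|\|_{L^2(\R^3,\R)}^{2} = \|u\|_{L^2(\R^3,\C)}^{2}$ and recognising the left-hand side through the real-valued analogue of the identity \eqref{nomesemplice} (which, as noted in Subsection \ref{Hs:Hsm:remarks}, expresses the $H^s_m(\R^3,\R)$-norm as the sum of the Bessel-weighted Gagliardo seminorm and the mass term), one immediately obtains
\[
\||u|\|_{H^s_m(\R^3,\R)}^{2} \;\leq\; \frac{C_s}{2}m^{(3+2s)/2}[u]_{A,s}^{2} + m^{2s}\|u\|_{L^2(\R^3,\C)}^{2} \;=\; \|u\|_{A,s}^{2}.
\]
Since the right-hand side is finite for any $u \in H^s_A(\R^3,\C)$, the function $|u|$ belongs to $L^2(\R^3,\R)$ and has finite $H^s_m(\R^3,\R)$-norm; invoking the real counterpart of Proposition \ref{normHmsC} (i.e.\ the equality $H^s_m(\R^3,\R) = H^s(\R^3,\R)$ with equivalent norms), we conclude $|u|\in H^s(\R^3,\R)$.

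There is really no serious obstacle here: the pointwise diamagnetic inequality is already stated and the Hilbert structure of $\|\cdot\|_{A,s}$ splits into a seminorm part and a mass part that correspond exactly to the two pieces of $\|\cdot\|_{H^s_m(\R^3,\R)}$ given in \eqref{nomesemplice}. The only minor point worth flagging is that no density argument is needed, because the pointwise inequality is valid for a.e.\ pair $(x,y)$ for any measurable $u$, so it applies directly to every $u \in H^s_A(\R^3,\C)$ rather than only to smooth approximations.
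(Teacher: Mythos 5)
Your proof is correct and is exactly the argument the paper has in mind: the paper states the lemma follows "easily" from the pointwise diamagnetic inequality \eqref{pointwise:diamagnetic}, and your write-up simply fills in the straightforward integration against the nonnegative Bessel kernel together with the identification of the right-hand side via \eqref{nomesemplice}. No difference in approach, and your remark that no density argument is needed is a sensible clarification.
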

	
	In the next Lemmas we show some embedding results.
	
	Let $H_{\rm loc}^s(\R^3,\C):=\set{u:\R^3\to\C : u\in H^s(\Omega,\C) \text{ for all open } \Omega \subset\subset \R^3}$.
	\begin{Lem}[Local embedding]
		\label{local:cont:emb}
		$H^s_A(\R^3,\C) \hookrightarrow H^s_{\loc}(\R^3,\C)$.
	\end{Lem}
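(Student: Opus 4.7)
The plan is to reduce the claim to a pointwise comparison between the magnetic and the standard Gagliardo-type differences on bounded sets. Fix an open bounded $\Omega\subset\subset\R^3$. By density of $C^{\infty}_c(\R^3,\C)$ in $H^s_A(\R^3,\C)$ and by Lemma \ref{local:Hs:equivalence} (which provides the equivalence between the norms \eqref{HsKnorm} and \eqref{HmsKnorm}), it is enough to exhibit a constant $C=C(\Omega,s,m,A)>0$ such that
\[
\|u\|_{H^s_m(\Omega,\C)}\le C\|u\|_{A,s}
\qquad\text{for every }u\in C^{\infty}_c(\R^3,\C).
\]
The $L^2(\Omega)$ part of the target norm is trivially controlled by $\|u\|_{L^2(\R^3,\C)}\le m^{-s}\|u\|_{A,s}$, so the real issue is the Bessel-weighted Gagliardo term on $\Omega\times\Omega$.

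For this I would use the complex triangle-like decomposition
\[
u(x)-u(y) = \Bigl[u(x)-\e^{i(x-y)\cdot A\left(\frac{x+y}{2}\right)}u(y)\Bigr] + \Bigl[\e^{i(x-y)\cdot A\left(\frac{x+y}{2}\right)}-1\Bigr]u(y),
\]
which after squaring and applying $|a+b|^2\le 2|a|^2+2|b|^2$ produces two pieces. The first, integrated against $|x-y|^{-(3+2s)/2}\mathcal{K}_{\frac{3+2s}{2}}(m|x-y|)$ on $\Omega\times\Omega\subset\R^3\times\R^3$, is at most $2[u]_{A,s}^2$ and is therefore already controlled by $\|u\|_{A,s}^2$. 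For the second piece, since $A$ has locally bounded gradient and $\Omega$ is bounded, $A$ is bounded on the compact set $K:=\overline{\{(x+y)/2:x,y\in\overline\Omega\}}$, so the elementary inequality $|\e^{i\theta}-1|^2\le\theta^2$ yields
\[
\bigl|\e^{i(x-y)\cdot A\left(\frac{x+y}{2}\right)}-1\bigr|^2\le \|A\|_{L^\infty(K)}^2|x-y|^2
\qquad\text{on }\Omega\times\Omega.
\]

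Invoking the asymptotic behavior \eqref{Gauntest1} of the Bessel function near $0$ exactly as in the proof of Lemma \ref{local:Hs:equivalence} (see also Remark \ref{rem:dipR}), one has $|x-y|^{-(3+2s)/2}\mathcal{K}_{\frac{3+2s}{2}}(m|x-y|)\le C|x-y|^{-(3+2s)}$ on $\Omega\times\Omega$, and hence the second contribution to the Gagliardo integral is dominated by
\[
C\int_{\Omega\times\Omega}\frac{|u(y)|^2}{|x-y|^{1+2s}}\dd x\dd y
\le C\|u\|_{L^2(\R^3,\C)}^2\sup_{y\in\Omega}\int_{B_{\diam\Omega}(y)}\frac{\dd x}{|x-y|^{1+2s}},
\]
which is finite because $1+2s<3$. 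Combining the two contributions and using once more $\|u\|_{L^2(\R^3,\C)}\le m^{-s}\|u\|_{A,s}$ produces the desired inequality, and then Lemma \ref{local:Hs:equivalence} transfers it to the norm on $H^s(\Omega,\C)$. The only technical caveat is the lack of homogeneity of the Bessel kernel: one must exploit that on $\Omega\times\Omega$ the quantity $|x-y|$ is bounded away from the exponential decay regime, so the small-argument asymptotic of $\mathcal{K}_{\frac{3+2s}{2}}$ alone suffices, with constants that do depend on $\Omega$.
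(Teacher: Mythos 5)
Your argument is correct and is essentially the paper's proof: the same decomposition of $u(x)-u(y)$ into the magnetic difference plus a phase-error term, the same $|\e^{i\theta}-1|\lesssim|\theta|$ estimate with $A$ locally bounded, the same Bessel bound, and the same final appeal to Lemma \ref{local:Hs:equivalence}. The only cosmetic differences are that you factor the phase error onto $u(y)$ instead of $u(x)$, use a single small-argument Bessel bound on all of $\Omega\times\Omega$ rather than the paper's near/far split, and insert an unnecessary density reduction to $C^\infty_c$ (the estimate holds directly for all $u\in H^s_A(\R^3,\C)$).
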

	\begin{proof}
		Let $\Omega$ be an open subset of $\R^3$ such that $\Omega \subset\subset \R^3$. Then,
		\begin{align*}
			\|u\|_{H_m^s(\Omega,\C)}^2 
			&\leq m^{2s}\int_{\Omega}|u(x)|^2 \dd x
			+ C_sm^{\frac{3+2s}{2}}\int_{\Omega \times \Omega}\frac{\left|\e^{-i(x-y)\cdot A\left(\frac{x+y}{2}\right)}u(x) - u(y)\right|^2}{|x-y|^{\frac{3+2s}{2}}}\mathcal{K}_{\frac{3+2s}{2}}(m|x-y|) \dd x \dd y \\
			&\qquad + C_sm^{\frac{3+2s}{2}}\int_{\Omega \times \Omega}\frac{\left|u(x)\left(\e^{-i(x-y)\cdot A\left(\frac{x+y}{2}\right)}-1\right)\right|^2}{|x-y|^{\frac{3+2s}{2}}}\mathcal{K}_{\frac{3+2s}{2}}(m|x-y|) \dd x \dd y \\
			& \leq 2\|u\|_{A,s}^2 + C_sm^{\frac{3+2s}{2}}\int_{\Omega \times \Omega}\frac{\left|u(x)\left(\e^{-i(x-y)\cdot A\left(\frac{x+y}{2}\right)}-1\right)\right|^2}{|x-y|^{\frac{3+2s}{2}}}\mathcal{K}_{\frac{3+2s}{2}}(m|x-y|) \dd x \dd y.
		\end{align*}
		Moreover, since, in general, $|\e^{i\theta} - 1| \leq 2$ and, for small $\theta \in \R$, $|\e^{i\theta} - 1| \leq C|\theta|$, we have, by \eqref{Bessel:bound},
		\begin{align*}
			&
			\int_{\Omega \times \Omega}\frac{\left|u(x)\left(\e^{-i(x-y)\cdot A\left(\frac{x+y}{2}\right)}-1\right)\right|^2}{|x-y|^{\frac{3+2s}{2}}}\mathcal{K}_{\frac{3+2s}{2}}(m|x-y|) \dd x \dd y \\
			&  \quad =\int_{\Omega}|u(x)|^2 \dd x \int_{\Omega \cap \left\{|x-y| \geq 1\right\}}\frac{\left|\e^{-i(x-y)\cdot A\left(\frac{x+y}{2}\right)}-1\right|^2}{|x-y|^{\frac{3+2s}{2}}}\mathcal{K}_{\frac{3+2s}{2}}(m|x-y|) \dd y \\
			& \quad\qquad + \int_{\Omega}|u(x)|^2 \dd x \int_{\Omega \cap \left\{|x-y| < 1\right\}}\frac{\left|\e^{-i(x-y)\cdot A\left(\frac{x+y}{2}\right)}-1\right|^2}{|x-y|^{\frac{3+2s}{2}}}\mathcal{K}_{\frac{3+2s}{2}}(m|x-y|) \dd y\\
			& \quad\leq C\left(\int_{\Omega}|u(x)|^2 \dd x \int_{\Omega \cap \left\{|x-y| \geq 1\right\}}\frac{1}{|x-y|^{3+2s}} \dd y \right.\\
			& \quad\qquad \left.+ \|A\|^2_{L^{\infty}(\Omega)}\int_{\Omega}|u(x)|^2 \dd x \int_{K \cap \left\{|x-y| < 1\right\}}\frac{1}{|x-y|^{1+2s}} \dd y \right)\\
			& \quad\leq C(s,m,\|A\|_{L^{\infty}(\Omega)})\int_{\Omega}|u(x)|^2 \dd x \left(\int_1^{+\infty}\frac{1}{r^{1+2s}} \dd r + \int_0^1\frac{1}{r^{2s-1}} \dd r \right)\\
			& \quad\leq C(s,m,\|A\|_{L^{\infty}(\Omega)})\int_{\Omega}|u(x)|^2 \dd x.
		\end{align*}
		Therefore we can conclude.
	\end{proof}
	
	\begin{Lem}
		\label{precompact:Lq}
		Let $\{A_n\}_n$ be a sequence of functions $A_n :\R^3 \to \R^3$ uniformly locally bounded and with locally bounded gradient and $\{u_n\}_n$ be a sequence such that, for every $n \in \N$, $u_n \in H^s_{A_n}(\R^3,\C)$ and
		\begin{equation}
			\label{sup:un:bounded}
			\sup_{n \in \N} \|u_n\|_{A_n,s} < +\infty.
		\end{equation}
		Then, up to a subsequence, $\{u_n\}_n$ converges strongly in $L^q_{\loc}(\R^3,\C)$, for every $q \in [1,2^*_s)$.
	\end{Lem}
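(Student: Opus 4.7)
The plan is to reduce the statement to the classical Rellich--Kondrachov compact embedding for $H^s$ on bounded domains, after upgrading the hypothesis $\sup_n \|u_n\|_{A_n,s}<+\infty$ to a uniform $H^s$--bound on every ball.

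\textbf{Step 1: Uniform local $H^s_m$ bound.} I would fix a bounded open set $\Omega\subset\subset\R^3$ and repeat the computation carried out in the proof of Lemma \ref{local:cont:emb} with $A_n$ in place of $A$. The key bookkeeping is that the constant produced there depends on the potential only through $\|A\|_{L^\infty(\Omega)}$. Since $\{A_n\}_n$ is by assumption uniformly locally bounded, $\sup_n\|A_n\|_{L^\infty(\Omega)}<+\infty$, and the calculation yields
\[
\|u_n\|_{H^s_m(\Omega,\C)}^2 \leq 2\|u_n\|_{A_n,s}^2 + C\bigl(s,m,\sup_n\|A_n\|_{L^\infty(\Omega)}\bigr)\|u_n\|_{L^2(\Omega,\C)}^2,
\]
so that, combined with \eqref{sup:un:bounded} and Lemma \ref{local:Hs:equivalence}, $\sup_n\|u_n\|_{H^s(\Omega,\C)}<+\infty$.

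\textbf{Step 2: Compactness on balls plus diagonal extraction.} Applying Step 1 with $\Omega=B_k(0)$ for each $k\in\N$ and invoking the classical compact embedding $H^s(B_k(0),\C)\hookrightarrow\hookrightarrow L^{q_0}(B_k(0),\C)$ for a fixed $q_0\in[1,2^*_s)$, I would extract for each $k$ a subsequence converging in $L^{q_0}(B_k(0),\C)$. A standard diagonal procedure then produces a single subsequence $\{u_{n_j}\}_j$ that converges in $L^{q_0}(B_k(0),\C)$ for every $k$, hence in $L^{q_0}_{\loc}(\R^3,\C)$.

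\textbf{Step 3: Upgrading to every $q\in[1,2^*_s)$.} From Lemma \ref{Lemma36} together with the uniform bound of Step 1, we have $\sup_n\|u_n\|_{L^{2^*_s}(B_k(0),\C)}<+\infty$ for every $k$. Interpolating between the $L^{q_0}$ convergence of the extracted subsequence and the uniform $L^{2^*_s}$ boundedness gives convergence in $L^q(B_k(0),\C)$ for every $q\in[q_0,2^*_s)$; for $q<q_0$ one simply applies H\"older's inequality on the bounded set $B_k(0)$.

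The only delicate point is the uniformity in $n$ of the embedding constant in Step 1, which is the reason the hypothesis requires uniform local boundedness of $\{A_n\}_n$ (and not merely pointwise control). Once this is in place, the remainder of the argument is a routine combination of the compact fractional Sobolev embedding on bounded Lipschitz domains with a diagonal extraction.
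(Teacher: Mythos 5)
Your proposal is correct and follows essentially the same route as the paper's proof: it obtains the uniform local $H^s_m(\Omega)$-bound by rerunning the computation from Lemma \ref{local:cont:emb} with $A_n$ in place of $A$, noting that the constant depends on the potential only through $\sup_n\|A_n\|_{L^\infty(\Omega)}$, then passes to the classical compact fractional Sobolev embedding via Lemma \ref{local:Hs:equivalence}. The only difference is that you spell out the diagonal extraction over an exhausting sequence of balls and the interpolation step to cover every $q\in[1,2^*_s)$, whereas the paper absorbs both into the citation of \cite[Corollary 7.2]{DiPaVa}.
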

	\begin{proof}
		Let $\Omega$ be an open subset of $\R^3$ such that $\Omega \subset\subset \R^3$. Arguing as in Lemma \ref{local:cont:emb} we have that, for every $n \in \N$,
		\[
		\|u_n\|_{H_m^s(\Omega,\C)}^2
		\leq
		\|u_n\|_{A_n,s}^2
		+ \frac{C_s}{2}m^{\frac{3+2s}{2}}\int_{\Omega \times \Omega}\frac{\left|u_n(x)\left(\e^{-i(x-y)\cdot A_n\left(\frac{x+y}{2}\right)}-1\right)\right|^2}{|x-y|^{\frac{3+2s}{2}}}\mathcal{K}_{\frac{3+2s}{2}}(m|x-y|) \dd x \dd y
		\]
		and, using the uniform locally boundedness of $\{A_n\}_n$,
		\[
		\int_{\Omega \times \Omega}\frac{\left|u_n(x)\left(\e^{-i(x-y)\cdot A_n\left(\frac{x+y}{2}\right)}-1\right)\right|^2}{|x-y|^{\frac{3+2s}{2}}}\mathcal{K}_{\frac{3+2s}{2}}(m|x-y|) \dd x \dd y
		\leq
		C(s,m)\int_{\Omega}|u_n(x)|^2 \dd x,
		\]
		so that, $\|u_n\|_{H_m^s(\Omega,\C)}\leq C \|u_n\|_{A_n,s}$, and then, by \eqref{sup:un:bounded}, $\{\|u_n\|_{H_m^s(\Omega)}\}_n$ is bounded.\\
		Hence, we can conclude thanks to Lemma \ref{local:Hs:equivalence}, and using \cite[Corollary 7.2]{DiPaVa}.
	\end{proof}
	
	\begin{Lem}[Magnetic Sobolev embedding]
		\label{magn:sob:emb}
		If $q \in [2,2^*_s]$, the embedding $H^s_A(\R^3,\C) \hookrightarrow L^q(\R^3,\C)$ is continuous and, if $q \in [1,2^*_s)$, $H^s_A(\R^3,\C) \hookrightarrow L^q_{\loc}(\R^3,\C)$ is compact.
	\end{Lem}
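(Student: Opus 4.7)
The plan is to deduce both assertions directly from tools already assembled in Section \ref{Functional:setting:sec}, splitting the argument according to the two parts of the statement.

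For the continuous embedding into $L^q(\R^3,\C)$ with $q\in[2,2^*_s]$, I would invoke the diamagnetic inequality (Lemma \ref{diamagnetic:eq}) to obtain $|u|\in H^s(\R^3,\R)$ with $\||u|\|_{H^s_m(\R^3,\R)}\leq\|u\|_{A,s}$. Thanks to (the real-valued analogue of) Proposition \ref{normHmsC}, the norm $\|\cdot\|_{H^s_m(\R^3,\R)}$ is equivalent to the standard $H^s(\R^3,\R)$ norm, so the classical fractional Sobolev embedding yields $\||u|\|_{L^q(\R^3,\R)}\leq C\|u\|_{A,s}$ for every $q\in[2,2^*_s]$. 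Since $\|u\|_{L^q(\R^3,\C)}=\||u|\|_{L^q(\R^3,\R)}$, chaining these estimates produces the required continuous embedding.

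For the compact embedding into $L^q_{\loc}(\R^3,\C)$ with $q\in[1,2^*_s)$, the key idea is to specialize Lemma \ref{precompact:Lq} to a constant sequence of potentials. Given any bounded sequence $\{u_n\}_n\subset H^s_A(\R^3,\C)$, I set $A_n\equiv A$ for every $n$. The standing assumption on $A$ (continuous with locally bounded gradient) guarantees that $A$ itself is locally bounded, so the constant sequence $\{A_n\}_n$ is trivially uniformly locally bounded and has locally bounded gradient. Moreover $\sup_n\|u_n\|_{A_n,s}=\sup_n\|u_n\|_{A,s}<+\infty$, so Lemma \ref{precompact:Lq} applies and yields a subsequence converging strongly in $L^q_{\loc}(\R^3,\C)$ for every $q\in[1,2^*_s)$, which is exactly the compactness claim.

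There is no substantial obstacle here: the technical difficulties (pointwise diamagnetic control, the equivalence between the Bessel-kernel norm and the Fourier/Gagliardo norm, and the local precompactness via a uniform bound in $H^s_m$) have already been handled in Lemmas \ref{diamagnetic:eq}, \ref{local:Hs:equivalence}, \ref{Lemma36} and \ref{precompact:Lq} together with Proposition \ref{normHmsC}. The only point worth care is verifying that the hypotheses on $A$ in Lemma \ref{precompact:Lq} are matched by the single potential $A$ appearing in the embedding; this is immediate from continuity plus the locally bounded gradient assumption.
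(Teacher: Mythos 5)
Your proof of the continuous embedding into $L^q(\R^3,\C)$ for $q\in[2,2^*_s]$ follows exactly the paper's own argument: Lemma \ref{diamagnetic:eq} (diamagnetic inequality) gives $|u|\in H^s(\R^3,\R)$ with $\||u|\|_{H^s_m(\R^3,\R)}\leq\|u\|_{A,s}$, the norm equivalence from Proposition \ref{normHmsC} (or equivalently \eqref{nomesemplice}) transfers this to the standard Gagliardo norm, and the classical fractional Sobolev embedding and the identity $\|u\|_{L^q(\R^3,\C)}=\||u|\|_{L^q(\R^3,\R)}$ conclude.

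For the compact local embedding you deviate slightly in packaging: the paper combines Lemma \ref{local:cont:emb} directly with \cite[Corollary 7.2]{DiPaVa}, whereas you specialize Lemma \ref{precompact:Lq} to the constant sequence $A_n\equiv A$. This is perfectly valid — the hypotheses of Lemma \ref{precompact:Lq} (uniform local boundedness and locally bounded gradient of the potentials, uniform bound on $\|u_n\|_{A_n,s}$) are trivially met, and the conclusion is exactly the local $L^q$-compactness needed. Since the proof of Lemma \ref{precompact:Lq} itself reruns the estimate of Lemma \ref{local:cont:emb} with a potential-dependent constant and then invokes the same \cite[Corollary 7.2]{DiPaVa}, the two routes rest on identical ingredients; yours is simply one level of indirection higher, at no real cost. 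Both arguments are correct and interchangeable.
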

	\begin{proof}
		Let $u\in H^s_A(\R^3,\C)$. By Lemma \ref{diamagnetic:eq}, $|u|\in H^s(\R^3,\R)$ and so, by \cite[Theorem 6.5]{DiPaVa}, $|u|\in L^q(\R^3,\R)$, for $q \in [2,2^*_s]$.
		Moreover, since, as observed before $H^s(\R^3,\R)=H_m^s(\R^3,\R)$, due to the equivalence of the respective norms, and by \eqref{nomesemplice}, using also \eqref{pointwise:diamagnetic}, we get
		\[
		\|u\|_{L^{2^*_s}(\R^3,\C)}^2
		=
		\||u|\|_{L^{2^*_s}(\R^3,\R)}^2
		\leq
		C \| |u|\|_{H_m^s(\R^3,\R)}^2
		\leq
		C\|u\|^2_{A,s}.
		\]
		Finally, the compact embedding follows by Lemma \ref{local:cont:emb} and \cite[Corollary 7.2]{DiPaVa}.
	\end{proof}
	Arguing as in \cite[Lemma 3.8]{dASq} (see also \cite[Lemma 5.3]{DiPaVa}) we have the following cut-off estimates.
	\begin{Lem}
		\label{cut-off}
		Let $\varphi \in C^{0,1}(\R^3,\R)$ such that $0 \leq \varphi \leq 1$. There exists $C>0$ such that for every $u \in H^s_A(\R^3,\C)$ and every pair of measurable sets $E_1,E_2 \subset \R^3$, it holds
		\begin{multline*}
			\int_{E_1 \times E_2} \frac{|\e^{-i(x-y)\cdot A\left(\frac{x+y}{2}\right)}\varphi(x)u(x)-\varphi(y)u(y)|^2}{|x-y|^{\frac{3+2s}{2}}}\mathcal{K}_{{\frac{3+2s}{2}}}(m|x-y|) \dd x \dd y \\
			\leq C\left(\min\left\{\int_{E_1}|u|^2 \dd x, \int_{E_2}|u|^2 \dd x \right\} + \int_{E_1 \times E_2} \frac{|\e^{-i(x-y)\cdot A\left(\frac{x+y}{2}\right)}u(x)-u(y)|^2}{|x-y|^{\frac{3+2s}{2}}}\mathcal{K}_{{\frac{3+2s}{2}}}(m|x-y|) \dd x \dd y\right),
		\end{multline*}
		where $C$ depends on $s$ and on the Lipschitz constant of $\vp$.
	\end{Lem}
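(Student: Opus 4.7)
My plan is to reduce the bound to two estimates: one controlling the ``genuine'' magnetic Gagliardo piece, the other controlling the error from inserting the cut-off $\varphi$. The starting point is the algebraic identity
\[
\e^{-i(x-y)\cdot A\left(\frac{x+y}{2}\right)}\varphi(x)u(x)-\varphi(y)u(y)
= \varphi(y)\bigl(\e^{-i(x-y)\cdot A\left(\frac{x+y}{2}\right)}u(x)-u(y)\bigr) + (\varphi(x)-\varphi(y))\,\e^{-i(x-y)\cdot A\left(\frac{x+y}{2}\right)}u(x).
\]
Squaring and applying $|a+b|^2\leq 2|a|^2+2|b|^2$, the first summand is controlled by the magnetic integrand itself since $0\leq\varphi\leq 1$, which delivers the Gagliardo term on the right-hand side (with an absolute constant). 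So everything reduces to bounding
\[
I(E_1,E_2):=\int_{E_1\times E_2}\frac{|\varphi(x)-\varphi(y)|^2\,|u(x)|^2}{|x-y|^{\frac{3+2s}{2}}}\mathcal{K}_{\frac{3+2s}{2}}(m|x-y|)\dd x\dd y
\]
by $C\int_{E_1}|u|^2\dd x$, with $C$ depending only on $s$ and the Lipschitz constant $L$ of $\varphi$.

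To do this I would use the two obvious pointwise bounds $|\varphi(x)-\varphi(y)|\leq L|x-y|$ and $|\varphi(x)-\varphi(y)|\leq 2$, combining them into $|\varphi(x)-\varphi(y)|^2\leq C_L\min\{|x-y|^2,1\}$. Then the Bessel estimate \eqref{Bessel:bound} (together with the near-zero asymptotics \eqref{Gauntest1}) gives
\[
\frac{\min\{|x-y|^2,1\}}{|x-y|^{\frac{3+2s}{2}}}\,\mathcal{K}_{\frac{3+2s}{2}}(m|x-y|)\leq C\bigl(|x-y|^{-(1+2s)}\mathbf{1}_{\{|x-y|<1\}}+|x-y|^{-(3+2s)}\mathbf{1}_{\{|x-y|\geq 1\}}\bigr),
\]
exactly as in the argument used to prove that $C^\infty_c(\R^3,\C)\subset\cH$. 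Interchanging the order of integration via Fubini,
\[
I(E_1,E_2)\leq C\int_{E_1}|u(x)|^2\left(\int_{|z|<1}\frac{\dd z}{|z|^{1+2s}}+\int_{|z|\geq 1}\frac{\dd z}{|z|^{3+2s}}\right)\dd x\leq C(s,L)\int_{E_1}|u|^2\dd x,
\]
since both radial integrals converge for $s\in(0,1)$. Combining this with the first step yields the asserted inequality with $\int_{E_1}|u|^2$.

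Finally, to obtain the $\min$ on the right-hand side, I would repeat the argument starting from the dual decomposition
\[
\e^{-i(x-y)\cdot A\left(\frac{x+y}{2}\right)}\varphi(x)u(x)-\varphi(y)u(y)
=\varphi(x)\bigl(\e^{-i(x-y)\cdot A\left(\frac{x+y}{2}\right)}u(x)-u(y)\bigr)+(\varphi(x)-\varphi(y))\,u(y),
\]
which produces the same estimate with $\int_{E_2}|u|^2\dd y$ in place of $\int_{E_1}|u|^2\dd x$. Taking the smaller of the two bounds concludes the proof. The only delicate point, as in Lemma \ref{local:cont:emb}, is handling the Bessel kernel through the split into small and large $|x-y|$; otherwise the argument closely mirrors \cite[Lemma 3.8]{dASq}.
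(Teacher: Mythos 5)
Your proof is correct and follows essentially the same route as the paper: you use the two algebraic decompositions (one factoring out $\varphi(y)$, one factoring out $\varphi(x)$), the elementary square inequality, the Bessel bound \eqref{Bessel:bound}, and the split of the remainder integral over $\{|x-y|<1\}$ (Lipschitz bound) and $\{|x-y|\geq1\}$ (boundedness of $\varphi$), finally taking the minimum of the two resulting estimates. The only cosmetic difference is that the paper performs the near/far split directly rather than via the compressed bound $|\varphi(x)-\varphi(y)|^2\leq C\min\{|x-y|^2,1\}$, but the computation is identical.
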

	\begin{proof}
		By \eqref{Bessel:bound},
		\begin{align*}
			&\int_{E_1 \times E_2} \frac{|\e^{-i(x-y)\cdot A\left(\frac{x+y}{2}\right)}\varphi(x)u(x)-\varphi(y)u(y)|^2}{|x-y|^{\frac{3+2s}{2}}}\mathcal{K}_{{\frac{3+2s}{2}}}(m|x-y|) \dd x \dd y \\
			& \leq 2\int_{E_1 \times E_2} \frac{|\e^{-i(x-y)\cdot A\left(\frac{x+y}{2}\right)}\varphi(x)u(x) - \varphi(x)u(y) |^2}{|x-y|^{\frac{3+2s}{2}}}\mathcal{K}_{{\frac{3+2s}{2}}}(m|x-y|) \dd x \dd y \\
			& \qquad + 2\int_{E_1 \times E_2} \frac{|\varphi(x)u(y)-\varphi(y)u(y) |^2}{|x-y|^{\frac{3+2s}{2}}}\mathcal{K}_{{\frac{3+2s}{2}}}(m|x-y|) \dd x \dd y \\
			& \leq 2\int_{E_1 \times E_2} \frac{|\e^{-i(x-y)\cdot A\left(\frac{x+y}{2}\right)}u(x) - u(y) |^2}{|x-y|^{\frac{3+2s}{2}}}\mathcal{K}_{{\frac{3+2s}{2}}}(m|x-y|) \dd x \dd y + C\int_{E_1 \times E_2} \frac{|u(y)|^2|\vp(x)-\vp(y)|^2}{|x-y|^{3+2s}} \dd x \dd y
		\end{align*}
		and, since $\vp \in C^{0,1}(\R^3,\R)$ and $0 \leq \vp \leq 1$,
		\begin{align*}
			&\int_{E_1 \times E_2} \frac{|u(y)|^2|\vp(x)-\vp(y)|^2}{|x-y|^{3+2s}} \dd x \dd y \\
			& \leq C\int_{E_2}|u(y)|^2 \int_{E_1 \cap \{|x-y| \leq 1\}}\frac{1}{|x-y|^{1+2s}} \dd x \dd y + 4\int_{E_2}|u(y)|^2 \int_{E_1 \cap \{|x-y| > 1\}}\frac{1}{|x-y|^{3+2s}} \dd x \dd y \\
			& \leq C\left(\int_{E_2}|u(y)|^2 \int_0^1r^{1-2s} \dd r \dd y + \int_{E_2}|u(y)|^2 \int_1^{+\infty}\frac{1}{r^{1+2s}} \dd r \dd y \right) \leq C\int_{E_2}|u(y)|^2 \dd y.
		\end{align*}
		On the other hand, 
		\begin{align*}
			&\int_{E_1 \times E_2} \frac{|\e^{-i(x-y)\cdot A\left(\frac{x+y}{2}\right)}\varphi(x)u(x)-\varphi(y)u(y)|^2}{|x-y|^{\frac{3+2s}{2}}}\mathcal{K}_{{\frac{3+2s}{2}}}(m|x-y|) \dd x \dd y \\
			& \leq
			2 \int_{E_1 \times E_2} \frac{|\e^{-i(x-y)\cdot A\left(\frac{x+y}{2}\right)}\varphi(x)u(x)-\e^{-i(x-y)\cdot A\left(\frac{x+y}{2}\right)}\varphi(y)u(x)|^2}{|x-y|^{\frac{3+2s}{2}}}\mathcal{K}_{{\frac{3+2s}{2}}}(m|x-y|) \dd x \dd y \\
			& \quad
			+ 2 \int_{E_1 \times E_2} \frac{|\e^{-i(x-y)\cdot A\left(\frac{x+y}{2}\right)}\varphi(y)u(x)-\varphi(y)u(y)|^2}{|x-y|^{\frac{3+2s}{2}}}\mathcal{K}_{{\frac{3+2s}{2}}}(m|x-y|) \dd x \dd y \\
			& \leq 2 \int_{E_1 \times E_2} \frac{|\e^{-i(x-y)\cdot A\left(\frac{x+y}{2}\right)}u(x) - u(y) |^2}{|x-y|^{\frac{3+2s}{2}}}\mathcal{K}_{{\frac{3+2s}{2}}}(m|x-y|) \dd x \dd y + C \int_{E_1 \times E_2} \frac{|u(x)|^2|\varphi(x)-\varphi(y)|^2}{|x-y|^{3+2s}}\dd x \dd y
		\end{align*}
		and, arguing as before, we conclude.
	\end{proof}
	Moreover, arguing as in \cite[Lemma 3.10 and Lemma 3.11]{dASq}, we get the following partial Gauge invariance properties.
	\begin{Lem}
		\label{partial:gauge}
		Let $\xi, \eta \in \R^3$, and $u \in H^s_A(\R^3,\C)$. If 
		\[
		v(x):=\e^{i\eta\cdot x}u(x+\xi), \quad A_{\eta}(x):=A(x+\xi) + \eta, \quad x \in \R^3,
		\]
		then, $v \in H^s_{A_{\eta}}(\R^3,\C)$ and $[u]_{A,s} = [v]_{A_\eta,s}$.\\ 
		In particular, if $A$ is linear and $\eta=-A(\xi)$, then $[u]_{A,s} = [v]_{A,s}$.
	\end{Lem}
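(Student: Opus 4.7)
The plan is to verify the identity by a direct computation: substitute the definitions of $v$ and $A_\eta$ into the magnetic Gagliardo integrand of $[v]_{A_\eta,s}$ and show that, after a translation change of variables, the integrand becomes precisely that of $[u]_{A,s}$. The crucial cancellation to watch is this: writing $v(x)=\e^{i\eta\cdot x}u(x+\xi)$ and $A_\eta((x+y)/2)=A((x+y)/2+\xi)+\eta$, we get
\[
\e^{-i(x-y)\cdot A_\eta((x+y)/2)}v(x)-v(y)
=\e^{i\eta\cdot y}\Bigl[\e^{-i(x-y)\cdot A((x+y)/2+\xi)}u(x+\xi)-u(y+\xi)\Bigr],
\]
because the phases combine via $-(x-y)\cdot\eta+\eta\cdot x=\eta\cdot y$. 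Taking the modulus kills the outer factor $\e^{i\eta\cdot y}$.

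Next, I would perform the change of variables $x'=x+\xi$, $y'=y+\xi$ in the double integral defining $[v]_{A_\eta,s}^2$. Since $x-y=x'-y'$ and $(x+y)/2+\xi=(x'+y')/2$, the kernel $|x-y|^{-(3+2s)/2}\mathcal{K}_{(3+2s)/2}(m|x-y|)$ is invariant, and the Jacobian is $1$, so $[v]_{A_\eta,s}=[u]_{A,s}$. The same translation immediately gives $\|v\|_{L^2(\R^3,\C)}=\|u\|_{L^2(\R^3,\C)}$, so $v\in\mathcal{H}$ with the $A_\eta$-norm.

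To upgrade this from $\mathcal{H}$-membership to $v\in H^s_{A_\eta}(\R^3,\C)$, I would argue by approximation: pick $\{u_n\}_n\subset C^\infty_c(\R^3,\C)$ with $u_n\to u$ in the $A$-norm, and set $v_n(x):=\e^{i\eta\cdot x}u_n(x+\xi)$. Then $v_n\in C^\infty_c(\R^3,\C)$, and the identity already proved, applied to the difference $u-u_n$, yields $[v-v_n]_{A_\eta,s}=[u-u_n]_{A,s}$, together with the trivial $L^2$-invariance; hence $v_n\to v$ in the $A_\eta$-norm, so $v$ lies in the closure.

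Finally, for the special case where $A$ is linear and $\eta=-A(\xi)$, linearity gives
\[
A_\eta(x)=A(x+\xi)-A(\xi)=A(x),
\]
so $A_\eta\equiv A$ and the identity $[u]_{A,s}=[v]_{A,s}$ follows from the general case. I expect no real obstacle in this proof; the only delicate point is being careful with the phase bookkeeping in the first step so that the $\eta$-dependent phases cancel to produce a pure overall factor of unit modulus, which is what allows the change of variables to close everything.
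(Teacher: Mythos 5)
Your proof is correct and follows the same route the paper intends by referring to \cite[Lemmas 3.10--3.11]{dASq}: the phase bookkeeping $-(x-y)\cdot\eta+\eta\cdot x=\eta\cdot y$ produces the unimodular overall factor, the translation $x\mapsto x+\xi$, $y\mapsto y+\xi$ preserves the kernel, and the density/linearity argument upgrades $\cH$-membership to $v\in H^s_{A_\eta}(\R^3,\C)$. The specialization $A$ linear, $\eta=-A(\xi)$ giving $A_\eta\equiv A$ is also handled correctly.
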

	
	Finally, if we consider
	\[
	H^s_{A,\rad}(\R^3,\C):=\{u \in H^s_A(\R^3,\C) : u \text{ radially symmetric}\},
	\]
	as usual in such a symmetric setting, a fundamental tool is a compact embedding. In our case we have
	\begin{Lem}
		\label{radial:comp:emb}
		For every $q \in (2,2^*_s)$, the embedding $u\in H^s_{A,\rad}(\R^3,\C) \longmapsto |u| \in L^q(\R^3,\R)$, is compact.
	\end{Lem}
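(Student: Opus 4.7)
The plan is to reduce this to the scalar (non-magnetic) radial compactness through the diamagnetic inequality. Given a bounded sequence $\{u_n\}_n \subset H^s_{A,\rad}(\R^3,\C)$, Lemma \ref{diamagnetic:eq} immediately yields that $\{|u_n|\}_n$ is bounded in $H^s_m(\R^3,\R)$. By the real-valued analogue of Proposition \ref{normHmsC} (the original statement of \cite[Proposition 6]{FaFe}), this space coincides with $H^s(\R^3,\R)$ and the two norms are equivalent, so $\{|u_n|\}_n$ is bounded in $H^s(\R^3,\R)$; moreover, because each $u_n$ is radially symmetric, so is $|u_n|$, and therefore $\{|u_n|\}_n$ lies in the radial subspace $H^s_{\rad}(\R^3,\R)$.

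The next step is to invoke the compactness of the embedding $H^s_{\rad}(\R^3,\R) \hookrightarrow L^q(\R^3,\R)$ for $q \in (2, 2^*_s)$, which is the fractional analogue of Strauss' classical radial compactness, already recalled in the circle of results used in this paper (see for instance the presentation in \cite{DiPaVa}). This yields, along a subsequence, $|u_n| \to w$ strongly in $L^q(\R^3,\R)$ for some $w \in L^q(\R^3,\R)$, which is exactly the claim that the map $u \mapsto |u|$ is compact.

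The main (and essentially only) technical point is the norm bookkeeping in the first step: one must make sure that the Bessel-kernel seminorm encoded in $\|\cdot\|_{A,s}$ controls, via the pointwise diamagnetic inequality \eqref{pointwise:diamagnetic}, the standard Gagliardo seminorm of $|u_n|$ up to the equivalence of Proposition \ref{normHmsC}. This is immediate once both tools are combined, so no serious obstacle is expected beyond carefully tracing the chain of equivalent norms.
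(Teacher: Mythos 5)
Your proof is correct and follows essentially the same route as the paper's: use the diamagnetic inequality (Lemma \ref{diamagnetic:eq}) to show $u\mapsto |u|$ is continuous from $H^s_{A,\rad}(\R^3,\C)$ into $H^s_{\rad}(\R^3,\R)$, then invoke the compactness of the radial embedding $H^s_{\rad}(\R^3,\R)\hookrightarrow L^q(\R^3,\R)$ for $q\in(2,2^*_s)$. The paper cites \cite[Theorem II.1]{Lions1982} for the latter step rather than a fractional Strauss lemma from \cite{DiPaVa}, but the content is the same.
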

	\begin{proof}
		Since, by Lemma \ref{diamagnetic:eq}, the embedding $u \in H^s_A(\R^3,\C) \longmapsto |u|\in H^s(\R^3,\R)$ is continuous, we apply \cite[Theorem II.1]{Lions1982} and we can conclude.
	\end{proof}
	
	\section{Behavior as \texorpdfstring{$s \nearrow 1$}{s -> 1}}
	\label{section:limit:behavior}
	In this section, we study the behavior of our operator \eqref{pseudo:mag:Sch:order:s} as $s \nearrow 1$.\\
	Our aim is twofold. From one hand we get the convergence results \`a la Bourgain-Brezis-Mironescu for open and  bounded $\Omega \subset \R^3$ with Lipschitz boundary and for $\R^3$.
	On the other hand we show how to {\em remove the singularity} from the definition of \eqref{pseudo:mag:Sch:order:s}.
	
	\subsection{Bourgain-Brezis-Mironescu type formulas}
	Let $\Omega \subset \R^3$ be an open bounded set with Lipschitz boundary and define $H^1_A(\Omega,\C)$ as the space of functions $u \in L^2(\Omega,\C)$ such that 
	\[
	\int_{\Omega} \left|\nabla_A u \right|^2 \dd x < +\infty,
	\]
	equipped with the norm
	\[
	\|u\|_{H^1_A(\Omega,\C)}:=\left(\int_{\Omega} \left|\nabla_A u\right|^2 \dd x + m^2\int_{\Omega} u^2 \dd x \right)^{\frac12}.
	\]
	The proof of Theorem \ref{main:result:BBM:bounded} relies on the following result due to Squassina and Volzone (see \cite[Theorem 2.5]{SqVo}): for the readers' convenience, we report the statement here.
	\begin{Th}
		\label{Th:2.5:SqVo}
		Assume $A \in C^2(\overline{\Omega},\C)$ and let $u \in H^1_A(\Omega,\C)$. Consider a sequence $\{\rho_n\}_n$ of nonnegative functions defined in $(0,+\infty)$, such that 
		\[
		\int_0^{+\infty}\rho_n(r)r^2 \dd r <+\infty \text{ for every } n \in \N,
		\]
		\begin{equation*}
			\lim_n\int_0^{+\infty}\rho_n(r)r^2 \dd r = 1,
		\end{equation*}
		and, for every $\delta>0$,
		\begin{equation}
			\label{rho:hyp:2}
			\lim_n\int_{\delta}^{+\infty}\rho_n(r)r^2 \dd r = 0.	
		\end{equation}
		Then, we have 
		\begin{equation*}
			\lim_n\int_{\Omega \times \Omega} \frac{|u(x)-\e^{i(x-y)\cdot A\left(\frac{x+y}{2}\right)}u(y)|^2}{|x-y|^2} \rho_n(|x-y|) \dd x \dd y = \frac{4\pi}{3}\int_{\Omega}|\nabla_A u|^2 \dd x.
		\end{equation*}
	\end{Th}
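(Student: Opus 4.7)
The plan is to adapt the classical Bourgain--Brezis--Mironescu argument to the magnetic setting. The strategy is: prove the limit for $C^2$ data via a pointwise Taylor expansion of the phase-shifted increment, then extend to general $u \in H^1_A(\Omega,\C)$ by density, using a uniform-in-$n$ upper bound. Throughout, write $\vp_{xy} := (x-y)\cdot A\bigl(\tfrac{x+y}{2}\bigr)$ and let $F_n(u)$ denote the integral on the left-hand side of the conclusion.

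First I would establish the uniform bound $F_n(u) \leq C \|u\|_{H^1_A(\Omega,\C)}^2$ with $C$ independent of $n$. The crucial identity is the decomposition
\[
u(x) - \e^{i\vp_{xy}} u(y) = u(x)(1-\e^{i\vp_{xy}}) + \e^{i\vp_{xy}}(u(x)-u(y)),
\]
combined with $|1 - \e^{i\vp}|^2 \leq \min(\vp^2, 4)$ and $|\vp_{xy}| \leq \|A\|_{L^\infty(\Omega)}|x-y|$. The first summand yields a multiple of $\|u\|_{L^2}^2 \int_0^\infty \rho_n(r)r^2 \dd r$, which is uniformly bounded. The second is the scalar (non-magnetic) BBM integrand applied to $u$; for $v \in H^1$ one has, uniformly in $n$,
\[
\int_{\R^3 \times \R^3}\frac{|v(x)-v(y)|^2}{|x-y|^2}\rho_n(|x-y|) \dd x \dd y \leq 4\pi \|\nabla v\|_{L^2(\R^3)}^2 \int_0^\infty \rho_n(r) r^2 \dd r,
\]
by the standard estimate $|v(x)-v(y)|^2 \leq |x-y|^2 \int_0^1 |\nabla v(x + t(y-x))|^2 \dd t$ and a change of variables $w = (1-t)x + ty$, $z = y-x$. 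Applied to an $H^1$-extension of $u$ (using Lipschitz regularity of $\partial\Omega$), this yields the uniform bound.

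For $u \in C^2(\overline{\Omega},\C)$ I would then Taylor-expand both $u(x-z)$ and $\e^{iz \cdot A(x-z/2)}$ around $z = 0$, obtaining
\[
\e^{iz\cdot A(x-z/2)}u(x-z) = u(x) - z \cdot \nabla_A u(x) + R(x,z), \qquad |R(x,z)| \leq C|z|^2,
\]
with $C$ uniform on $\overline\Omega$, hence $|u(x) - \e^{iz\cdot A(x-z/2)}u(x-z)|^2 = |z \cdot \nabla_A u(x)|^2 + O(|z|^3)$. Fix small $\delta>0$ and split $F_n(u)$ over $\{|x-y|\leq \delta\}$ and $\{|x-y|>\delta\}$. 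The far part is bounded by $4\|u\|_{L^2}^2 \delta^{-2}\int_{|z|>\delta}\rho_n(|z|)\dd z$ and tends to $0$ as $n\to\infty$ by hypothesis \eqref{rho:hyp:2}. On the near region the Taylor error contributes $O(\delta)$ uniformly in $n$; the leading term, passing to spherical coordinates $z = r\omega$ and using $\int_{\mathbb{S}^2}|\omega\cdot v|^2 \dd \sigma(\omega) = \tfrac{4\pi}{3}|v|^2$ for $v \in \C^3$, equals
\[
\frac{4\pi}{3}\left(\int_0^\delta \rho_n(r) r^2 \dd r\right)\int_\Omega |\nabla_A u|^2 \dd x + O(\delta),
\]
where the $O(\delta)$ absorbs the boundary strip $\{x \in \Omega : \dist(x,\partial\Omega)<\delta\}$, whose measure is $O(\delta)$ by Lipschitz regularity. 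Sending $n\to\infty$ first (so $\int_0^\delta \rho_n r^2 \dd r \to 1$) and then $\delta\to 0$ gives the limit for smooth $u$.

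Finally, given $u \in H^1_A(\Omega,\C)$ and approximants $u_k \in C^2(\overline\Omega,\C)$ with $u_k \to u$ in $H^1_A$, the fact that $\sqrt{F_n(\cdot)}$ is a seminorm (since the integrand can be written as $\|T_n u\|_{L^2(\Omega\times\Omega)}^2$ for a linear operator $T_n$) yields $|\sqrt{F_n(u)} - \sqrt{F_n(u_k)}| \leq \sqrt{F_n(u-u_k)} \leq C\|u-u_k\|_{H^1_A}$, uniform in $n$, so the iterated limit carries through. The main obstacle is the bookkeeping in the smooth-case step: one must simultaneously control the Taylor remainder and the boundary strip as $O(\delta)$ \emph{uniformly in $n$}, so that the iterated limit $\lim_{\delta\to 0}\lim_{n\to\infty}$ is justified. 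A secondary technical point is the density of $C^2(\overline\Omega,\C)$ in $H^1_A(\Omega,\C)$ under Lipschitz boundary, which follows from the usual extension-and-mollification scheme once $A$ is bounded on $\overline\Omega$.
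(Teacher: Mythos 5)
The paper does not prove Theorem \ref{Th:2.5:SqVo}: it is quoted verbatim from Squassina--Volzone \cite[Theorem 2.5]{SqVo} and used as a black box in the proof of Theorem \ref{main:result:BBM:bounded}, so there is no internal proof to compare yours against. Judged on its own, your argument is correct and follows the standard Bourgain--Brezis--Mironescu scheme (uniform bound of the functional by the $H^1_A$-norm, Taylor expansion for smooth functions together with the spherical average $\int_{\mathbb{S}^2}|\omega\cdot v|^2\,\dd\sigma(\omega)=\tfrac{4\pi}{3}|v|^2$, then density using that $\sqrt{F_n}$ is a seminorm and the uniform bound), which is also essentially the route taken in the cited source. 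The only points worth making fully explicit are the ones you already handle or flag: since $\Omega$ is merely Lipschitz, the segment $[x,y]$ and the midpoint $\tfrac{x+y}{2}$ may leave $\Omega$, so the gradient estimate in the uniform bound must indeed be run on an $H^1(\R^3)$-extension (as you do), the Taylor step should be performed with approximants that are restrictions of globally smooth functions (which your extension-and-mollification scheme provides), and $A$ is tacitly assumed to be defined (or $C^2$-extended) wherever $\tfrac{x+y}{2}$ falls --- an assumption already implicit in the statement of the theorem itself.
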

	Therefore we can proceed as follows.
	\begin{proof}[Proof of Theorem \ref{main:result:BBM:bounded}]
		In order to apply Theorem \ref{Th:2.5:SqVo}, we need to find a suitable sequence $\{\rho_n\}_n$.\\ 
		Let $r_{\Omega}:=\diam(\Omega)$ and let $\psi \in C^{\infty}([0,+\infty),[0,1])$ be a cut-off function such that 
		\[
		\psi(r) =
		\begin{cases}
			1, &0 \leq r < r_{\Omega},\\
			0, &r > 2r_{\Omega}.
		\end{cases}
		\]
		It follows that $\psi(|x-y|)=1$ for every $x,y \in \Omega$. Now, let $\{s_n\}_n \subset (0,1)$ be a sequence such that $s_n \nearrow 1$ as $n \to +\infty$ and consider the sequence
		\begin{equation*}
			\rho_n(r):=\frac{2\pi}{3}C_{s_n}m^{\frac{3+2s_n}{2}}r^{\frac{1-2s_n}{2}}\bessel{\frac{3+2s_n}{2}}{mr}\psi(r), \qquad r > 0. 
		\end{equation*}
		By \eqref{Bessel:bound}, since $\{s_n\}_n \in (0,1)$, we have
		\[
		\int_0^{+\infty} \rho_n(r)r^2 \dd r \leq C\frac{s_n2^{\frac{2s_n+1}{2}}}{3\sqrt{\pi}\Gamma(1-s_n)}\int_0^{2r_{\Omega}} r^{1-2s_n} \dd r < +\infty,
		\]
		obtaining the required summability.\\
		With the same computations, we can get \eqref{rho:hyp:2}. Indeed, using the monotonicity of $\mathcal{K}_\nu$ with respect to $\nu$ and \eqref{Bessel:bound}, for every $\delta > 0$ we have
		\begin{align*}
			\int_{\delta}^{+\infty} \rho_n(r)r^2 \dd r &\leq \frac{2\pi}{3}C_{s_n}m^{\frac{3+2s_n}{2}}\int_{\delta}^{2r_{\Omega}}r^{\frac{1-2s_n}{2}}\bessel{\frac{5}{2}}{mr}r^2 \dd r \leq C\frac{s_n2^{\frac{2s_n+1}{2}}m^{s_n-1}}{3\sqrt{\pi}\Gamma(1-s_n)}\int_{\delta}^{2r_{\Omega}} r^{-s_n} \dd r \\
			& = C\frac{s_n2^{\frac{2s_n+1}{2}}m^{s_n-1}}{3\sqrt{\pi}\Gamma(2-s_n)}\left[(2r_{\Omega})^{1-s_n} - \delta^{1-s_n}\right] \to 0 \text{ as } n \to +\infty.
		\end{align*}
		To conclude, let us write 
		\[
		\int_0^{+\infty} \rho_n(r)r^2 \dd r = I_{1,n} + I_{2,n},
		\]
		where
		\[
		I_{1,n} = \frac{2\pi}{3}C_{s_n}m^{\frac{3+2s_n}{2}}\int_0^{+\infty}r^{\frac{5-2s_n}{2}}\bessel{\frac{3+2s_n}{2}}{mr} \dd r
		\]
		and
		\[
		I_{2,n} = \frac{2\pi}{3}C_{s_n}m^{\frac{3+2s_n}{2}}\int_0^{+\infty}r^{\frac{5-2s_n}{2}}\bessel{\frac{3+2s_n}{2}}{mr}(\psi(r)-1) \dd r.
		\]
		By \eqref{integral:repr:Bessel} and applying the change of variable $x=(mr)^2/4t$, we get
		\begin{equation}
			\label{I1:integral}
			\begin{aligned}
				I_{1,n} &= \frac{2\pi}{3}C_{s_n}\frac{m^{3+2s_n}}{2^{\frac{5+2s_n}{2}}}\int_0^{+\infty}\e^{-t}t^{-\frac{5+2s_n}{2}}\int_0^{+\infty}r^4\e^{-\frac{m^2r^2}{4t}} \dd r \dd t\\
				& = \frac{2\pi}{3}C_{s_n}\frac{m^{2s_n-2}}{2^{\frac{2s_n-3}{2}}}\left(\int_0^{+\infty}\e^{-t}t^{-s_n}\dd t\right)\left(\int_0^{+\infty}x^{\frac32}\e^{-x} \dd x\right) \\
				& = \frac{2\pi}{3}C_{s_n}\frac{m^{2s_n-2}}{2^{\frac{2s_n-3}{2}}}\Gamma(1-s_n)\Gamma\left(\frac52\right) = s_nm^{2s_n-2} \to 1 \text{ as } n \to +\infty.
			\end{aligned}
		\end{equation}
		Moreover, using the monotonicity of $\mathcal{K}_\nu$ with respext to $\nu$ and \eqref{Gauntest2}, we have
		\begin{equation}
			\label{I2:integral}
			\begin{aligned}
				|I_{2,n}| &\leq \frac{2\pi}{3}C_{s_n}m^{\frac{3+2s_n}{2}}\int_{r_{\Omega}}^{+\infty} r^{\frac{5-2s_n}{2}}\bessel{\frac{3+2s_n}{2}}{mr} \dd r \\    
				& \leq \frac{2\pi}{3}C_{s_n}m^{\frac{3+2s_n}{2}}\int_{r_{\Omega}}^{+\infty} r^{\frac{5-2s_n}{2}}\bessel{\frac52}{mr} \dd r \\
				& \leq C\frac{s_n2^{\frac{2s_n-1}{2}}}{\Gamma(1-s_n)}m^{1+s_n} \int_{r_{\Omega}}^{+\infty}\frac{r^2}{r^{s_n}}\e^{-mr} \dd r \\
				& \leq C\frac{s_n2^{\frac{2s_n-1}{2}}}{r_{\Omega}^{s_n}\Gamma(1-s_n)}m^{1+s_n} \int_{r_{\Omega}}^{+\infty} r^2\e^{-mr} \dd r \to 0, \text{ as } n \to +\infty.
			\end{aligned}
		\end{equation}
		Therefore, applying Theorem \ref{Th:2.5:SqVo}, we get
		\[
		\int_{\Omega \times \Omega} \frac{\left|u(x)-\e^{i(x-y)\cdot A\left(\frac{x+y}{2}\right)}u(y) \right|^2}{|x-y|^2}\rho_n(|x-y|) \dd x \dd y
		\to
		\frac{4\pi}{3}\int_{\Omega}|\nabla_Au|^2 \dd x.
		\]
		Hence,
		\begin{align*}
			&\frac{C_{s_n}}{2}  m^{\frac{3+2s_n}{2}}\int_{\Omega \times \Omega} \frac{\left|u(x)-\e^{i(x-y)\cdot A\left(\frac{x+y}{2}\right)}u(y) \right|^2}{|x-y|^\frac{3+2s_n}{2}}\bessel{\frac{3+2s_n}{2}}{m|x-y|} \dd x \dd y \\
			&\qquad = \frac{3}{4\pi}\int_{\Omega \times \Omega} \frac{\left|u(x)-\e^{i(x-y)\cdot A\left(\frac{x+y}{2}\right)}u(y) \right|^2}{|x-y|^2}\rho_n(|x-y|) \dd x \dd y 
			\to
			\int_{\Omega}|\nabla_Au|^2 \dd x,
		\end{align*}
		as $n \to +\infty$, concluding the proof.
	\end{proof}
	Now, let us move to the whole space setting. The proof strategy is almost the same as before (it relies on using a suitable sequence of mollifiers $\{\rho_n\}_n$). However, a slightly different assumption on the magnetic field must be considered. We are going to apply the following result (\cite[Theorem 1.1 and Remark 2.1]{NgPiSqVe}).
	\begin{Th}
		\label{Th:1.1:NgPiSqVe}
		Let $A:\R^3 \to \R^3$ be Lipschitz and let $\{\rho_n\}_n$ be a sequence of nonnegative functions defined in $(0,+\infty)$, such that 
		\[
		\int_0^{+\infty}\rho_n(r)r^2 \dd r <+\infty \text{ for every } n \in \N,
		\]
		\begin{equation}
			\label{rho:hyp:1:bis}
			\int_0^{+\infty}\rho_n(r)r^2 \dd r = 1,
		\end{equation}
		and, for every $\delta>0$,
		\begin{equation}
			\label{rho:hyp:2:bis}
			\lim_n\int_{\delta}^{+\infty}\rho_n(r)r^2 \dd r = 0.	
		\end{equation}
		Then, for $u \in H^1_A(\R^3,\C)$, we have
		\[
		\lim_n \int_{\R^3 \times \R^3} \frac{|u(x)-\e^{i(x-y)\cdot A\left(\frac{x+y}{2}\right)}u(y)|^2}{|x-y|^2} \rho_n(|x-y|) \dd x \dd y = \frac{4\pi}{3}\int_{\R^3}|\nabla_A u|^2 \dd x.
		\]
	\end{Th}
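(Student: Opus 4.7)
The plan is to follow the Bourgain--Brezis--Mironescu template adapted to the gauge-covariant setting: a uniform-in-$n$ continuity estimate, direct computation on $C_c^\infty(\R^3,\C)$, and a density argument. Throughout, set
\[
I_n(u):=\int_{\R^3\times\R^3}\frac{|u(x)-\e^{i(x-y)\cdot A\left(\frac{x+y}{2}\right)}u(y)|^2}{|x-y|^2}\rho_n(|x-y|)\dd x \dd y.
\]

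First I would establish the uniform bound $I_n(u)\le C\bigl(\int_{\R^3}|u|^2 \dd x+\int_{\R^3}|\nabla_A u|^2\dd x\bigr)$ for every $n$, with $C$ depending only on the Lipschitz constant $L$ of $A$. Split at $|x-y|=1$: on the far part, the crude estimate $|u(x)-\e^{i\phi}u(y)|^2\le 2(|u(x)|^2+|u(y)|^2)$ together with $\int_1^{+\infty}\rho_n(r)\dd r\le \int_1^{+\infty}r^2\rho_n(r)\dd r\le 1$ gives the $L^2$ contribution. On the near part, the parallel-transport identity
\[
u(x)-\e^{i\int_y^x A\cdot d\ell}u(y)=-\int_0^1 \frac{d}{dt}\Bigl[\e^{i\int_y^{y+t(x-y)}A\cdot d\ell}u(y+t(x-y))\Bigr]\dd t
\]
yields $|u(x)-\e^{i\int_y^x A\cdot d\ell}u(y)|^2\le |x-y|^2\int_0^1|\nabla_A u(y+t(x-y))|^2\dd t$; the midpoint-rule error $\bigl|(x-y)\cdot A((x+y)/2)-\int_y^x A\cdot d\ell\bigr|\le \frac{L}{4}|x-y|^2$ supplies an additional term controlled by $|u(y)|^2|x-y|^2\rho_n(|x-y|)$. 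A Fubini change of variables $w=y+t(x-y)$, $z=x-y$ absorbs the main piece into $\int_{\R^3}|\nabla_A u|^2\dd x$, while the midpoint-error piece is absorbed via $\int_0^1 r^4\rho_n(r)\dd r\le 1$.

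For $u\in C_c^\infty(\R^3,\C)$, I would fix $\delta>0$ and split at $|x-y|=\delta$. The far piece is bounded by $4\delta^{-2}\|u\|_{L^2}^2\int_\delta^{+\infty}r^2\rho_n(r)\dd r$, which vanishes as $n\to+\infty$ by \eqref{rho:hyp:2:bis}. On the near piece, the Taylor expansion $\e^{-iz\cdot A(x+z/2)}u(x+z)=u(x)+z\cdot\nabla_A u(x)+O(|z|^2)$, with $O$-constant uniform on $\supp u$ (using $u\in C^2$ and $A$ Lipschitz), gives
\[
|u(x)-\e^{i(x-y)\cdot A\left(\frac{x+y}{2}\right)}u(y)|^2=|(y-x)\cdot\nabla_A u(x)|^2+O(|y-x|^3).
\]
Passing to spherical coordinates $z=r\omega$ and invoking $\int_{\mathbb{S}^2}|\omega\cdot v|^2\dd\sigma(\omega)=\frac{4\pi}{3}|v|^2$ for $v\in\C^3$, the main contribution becomes $\frac{4\pi}{3}\int_{\R^3}|\nabla_A u|^2\dd x\cdot \int_0^\delta r^2\rho_n(r)\dd r$, which tends to $\frac{4\pi}{3}\int_{\R^3}|\nabla_A u|^2\dd x$ by \eqref{rho:hyp:1:bis} and \eqref{rho:hyp:2:bis}; the $O(|z|^3)$ remainder yields a contribution of order $\delta\int_0^\delta r^2\rho_n(r)\dd r=O(\delta)$, which disappears after letting $\delta\searrow 0$.

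Finally, $I_n^{1/2}$ is a seminorm, since $I_n$ arises by polarization from a symmetric real-bilinear form, so $|I_n(u)^{1/2}-I_n(v)^{1/2}|\le I_n(u-v)^{1/2}$. Combined with the uniform estimate of the first step and the density of $C_c^\infty(\R^3,\C)$ in $H^1_A(\R^3,\C)$ (standard for Lipschitz $A$ via truncation and mollification), approximating $u\in H^1_A(\R^3,\C)$ by $v_k\in C_c^\infty(\R^3,\C)$ and taking $\limsup/\liminf$ as $n\to+\infty$ followed by $k\to+\infty$ transfers the smooth-case limit to the general case. The main obstacle is the uniform bound: since $A$ is only Lipschitz and may be unbounded, $Au$ and $\nabla u$ need not lie separately in $L^2$, so one must work with the full covariant derivative via line integrals, and must carefully quantify the midpoint-prescription discrepancy so that only the moments $\int_0^1 r^2\rho_n(r)\dd r$ and $\int_0^1 r^4\rho_n(r)\dd r$ appear.
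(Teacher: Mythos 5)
The paper does not actually prove this statement: it is imported verbatim from \cite[Theorem 1.1 and Remark 2.1]{NgPiSqVe} and then used as a black box in the proof of Theorem \ref{main:result:BBM:unbounded}. What you have produced is therefore a self-contained reconstruction, and in outline it is correct and follows essentially the strategy of the cited reference: a uniform bound $I_n(u)\le C(L)\big(\|u\|_{L^2}^2+\|\nabla_A u\|_{L^2}^2\big)$ obtained by comparing the midpoint phase with the straight-line (parallel-transport) phase, the explicit computation for $u\in C^\infty_c(\R^3,\C)$ via Taylor expansion and the spherical identity $\int_{\mathbb{S}^2}|\omega\cdot v|^2\dd\sigma(\omega)=\tfrac{4\pi}{3}|v|^2$ (valid for complex $v$), and a seminorm-plus-density argument (density of $C^\infty_c$ in $H^1_A$ being Simon's classical result, valid well beyond Lipschitz $A$). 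Two points need fixing, one of them substantive: (i) in your parallel-transport identity the phase must be conjugated, i.e. one should take $\phi(t)=\e^{-i\int_0^t A(y+s(x-y))\cdot(x-y)\dd s}\,u(y+t(x-y))$; with the sign you wrote, differentiation produces $(\nabla+iA)u$ along the segment, and $\|(\nabla+iA)u\|_{L^2}$ is \emph{not} controlled by $\|\nabla_A u\|_{L^2}$ when $A$ is unbounded, so the uniform bound would break as stated -- with the conjugate phase you get exactly $|\phi'(t)|\le|x-y|\,|\nabla_A u(y+t(x-y))|$, and the rest of your estimate (midpoint-rule error $\le\tfrac{L}{4}|x-y|^2$, the moments $\int_0^1 r^2\rho_n(r)\dd r$ and $\int_0^1 r^4\rho_n(r)\dd r\le 1$, the Fubini change of variables) goes through; (ii) since the uniform bound must be applied to $u-v_k$ with $u$ merely in $H^1_A(\R^3,\C)$, you should justify the fundamental theorem of calculus along segments for such $u$, which follows because $A$ Lipschitz is locally bounded, hence $u\in H^1_{\loc}(\R^3,\C)$ and is absolutely continuous on almost every line. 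With these adjustments your argument is complete, and it supplies a proof that the paper itself only cites.
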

	Thus we get
	\begin{proof}[Proof of Theorem \ref{main:result:BBM:unbounded}]
		Consider a sequence $\{s_n\}_n \subset (0,1)$ such that $s_n \nearrow 1$ as $n \to +\infty$ and let
		\begin{equation*}
			\rho_n(r):=\frac{2\pi}{3}\frac{C_{s_n}}{s_n}m^{\frac{7-2s_n}{2}}r^{\frac{1-2s_n}{2}}\bessel{\frac{3+2s_n}{2}}{mr}, \qquad r > 0.
		\end{equation*}
		We show that this sequence satisfies the hypotheses of Theorem \ref{Th:1.1:NgPiSqVe}. \\
		Fix $\delta > 0$. By the monotonicity of $\mathcal{K}_\nu$ with respect to $\nu$ and \eqref{Gauntest2}, we get
		\begin{align*}
			\int_{\delta}^{+\infty} \rho_n(r)r^2 \dd r &\leq C\frac{C_{s_n}}{s_n}m^{3-s_n}\int_{\delta}^{+\infty} r^{2-s_n}\e^{-mr} \dd r \to 0 \text{ as } n \to +\infty.
		\end{align*}
		Hence, \eqref{rho:hyp:2:bis} holds. Moreover, arguing as in \eqref{I1:integral},
		\begin{align*}
			\int_0^{+\infty} \rho_n(r)r^2 \dd r = 1.
		\end{align*}
		and \eqref{rho:hyp:1:bis} follows.\\
		Therefore, by Theorem \ref{Th:1.1:NgPiSqVe}, we have
		\[
		\int_{\R^3 \times \R^3} \frac{|u(x)-\e^{i(x-y)\cdot A\left(\frac{x+y}{2}\right)}u(y)|^2}{|x-y|^2} \rho_n(|x-y|) \dd x \dd y \to \frac{4\pi}{3}\int_{\R^3}|\nabla_Au|^2 \dd x.
		\]
		Hence,
		\begin{align*}
			&\frac{C_{s_n}}{2}m^{\frac{3+2s_n}{2}}\int_{\R^3 \times \R^3} \frac{\left|u(x)-\e^{i(x-y)\cdot A\left(\frac{x+y}{2}\right)}u(y) \right|^2}{|x-y|^\frac{3+2s_n}{2}}\bessel{\frac{3+2s_n}{2}}{m|x-y|} \dd x \dd y \\
			&\qquad = \frac{3}{4\pi}s_nm^{2s_n-2}\int_{\R^3 \times \R^3} \frac{\left|u(x)-\e^{i(x-y)\cdot A\left(\frac{x+y}{2}\right)}u(y) \right|^2}{|x-y|^2}\rho_n(|x-y|) \dd x \dd y 
			\to \int_{\R^3}|\nabla_Au|^2 \dd x
		\end{align*}
		as $n \to +\infty$.
	\end{proof}

	\subsection{Convergence of the operator}
	As for the nonmagnetic case, it is possible to remove the singularity from the integral definition of \eqref{pseudo:mag:Sch:order:s}. To do this, we need to write our operator in a different integral form (see \cite[Theorem 3.2]{DiPaVa} for the classical fractional Laplacian and \cite[Theorem 4.5.2]{Ambrosio2021-1} for the pseudorelativistic fractional case). We start by stating two preliminary results contained in \cite{dASq}.
	\begin{Lem}[\cite{dASq}, Lemma 2.4]
		\label{Lemma_2.4}
		Let $K$ be a compact subset of $\R^3$, $R>0$ and set $K':=\{x \in \R^3 : d(x,K) \leq R\}$. Assume that $f \in C^2(\R^3 \times \R^3,\C)$ and that $g \in C^{1,\gamma}(K',\C)$ for some $\gamma \in [0,1]$. If $h(x,y)=f(x,y)g(y)$, then there exists a positive constant $C$ depending on $K,f,g,R$, such that
		\[
		\left|\nabla_y h(x,y_2) - \nabla_y h(x,y_1)\right| \leq C|y_2-y_1|^{\gamma},
		\]
		for all $x \in K$ and every $y_1,y_2 \in K'$.
	\end{Lem}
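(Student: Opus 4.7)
The plan is to unfold $\nabla_y h$ by the product rule, decompose the resulting difference into mixed-term pieces, and control each piece by pairing a local boundedness bound for one factor with a Hölder-type modulus of continuity bound for the other. Throughout, the key structural remark is that $K'$ is automatically compact (it is closed, and bounded because $K$ is bounded), so that all the sup-norm quantities of $f$, $\nabla_y f$, $\nabla_y^2 f$ on $K\times K'$ and of $g$, $\nabla g$ on $K'$ are finite constants depending only on the data $K,R,f,g$.

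Concretely, I would first write
\[
\nabla_y h(x,y) = (\nabla_y f)(x,y)\,g(y) + f(x,y)\,\nabla g(y),
\]
then subtract the corresponding expressions at $y_2$ and $y_1$, inserting the mixed terms $(\nabla_y f)(x,y_1)g(y_2)$ and $f(x,y_1)\nabla g(y_2)$ to obtain four products in which exactly one factor varies between $y_1$ and $y_2$. This is the standard telescoping trick; no content beyond the product rule is used here.

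For the individual pieces, the variations coming from $f$ are Lipschitz in $y$ uniformly in $x\in K$: since $f\in C^2(\R^3\times\R^3,\C)$ and $K\times K'$ is compact, the mean value inequality gives $|f(x,y_2)-f(x,y_1)|\leq C|y_2-y_1|$ and $|(\nabla_y f)(x,y_2)-(\nabla_y f)(x,y_1)|\leq C|y_2-y_1|$, with $C=C(K,R,f)$. The variations coming from $g$ are controlled directly by the hypothesis $g\in C^{1,\gamma}(K',\C)$, which yields both $|g(y_2)-g(y_1)|\leq C|y_2-y_1|$ and, crucially, $|\nabla g(y_2)-\nabla g(y_1)|\leq C|y_2-y_1|^{\gamma}$. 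The remaining factors in each product are uniformly bounded by $\|f\|_{C^1(K\times K')}$, $\|g\|_{C^1(K')}$, etc.

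Collecting the four estimates, three of the four terms carry a factor $|y_2-y_1|$ and one carries $|y_2-y_1|^\gamma$. Since $K'$ has finite diameter, $|y_2-y_1|\leq \diam(K')^{1-\gamma}|y_2-y_1|^\gamma$, which lets me absorb the Lipschitz terms into the Hölder one at the cost of a constant depending on $R$ and $\diam K$. There is no genuine obstacle here; the only point requiring a little bookkeeping is to keep the constants uniform in $x\in K$, which is automatic because every factor involving $f$ is estimated by its $C^2$-norm on the compact set $K\times K'$, and every factor involving $g$ by its $C^{1,\gamma}$-seminorm on $K'$.
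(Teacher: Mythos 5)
The paper does not actually prove this statement; it recalls it verbatim as Lemma~2.4 of \cite{dASq} and uses it without proof, so there is no ``paper's own proof'' to compare against. That said, your proposed argument is correct and is the natural one: product rule for $\nabla_y h$, insertion of the mixed terms $(\nabla_y f)(x,y_1)g(y_2)$ and $f(x,y_1)\nabla g(y_2)$, then estimate each of the four resulting pieces by pairing a uniform bound on one factor with a modulus-of-continuity bound on the other. The key observations you identify are all sound: $K'$ is compact, so $\|f\|_{C^2}$, $\|g\|_{C^{1,\gamma}}$ and the relevant sup norms are finite on $K\times K'$ and $K'$; $f\in C^2$ gives (via the mean value inequality applied on the convex hull of $K\times K'$, still compact, which sidesteps any nonconvexity of $K'$) Lipschitz bounds for $f$ and $\nabla_y f$ in $y$, uniform in $x\in K$; $g\in C^{1,\gamma}(K')$ gives the $\gamma$-H\"older bound on $\nabla g$; and the Lipschitz contributions are absorbed into the H\"older one using $|y_2-y_1|\le\diam(K')^{1-\gamma}|y_2-y_1|^{\gamma}$. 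The edge cases $\gamma=0$ and $\gamma=1$ also go through. So the proposal is a complete and correct proof of the cited lemma.
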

	A consequence of the previous Lemma is 
	\begin{Lem}[\cite{dASq}, Lemma 2.5]
		\label{numerator:control:origin}
		Let $A \in C^2(\R^3,\C)$ and $u \in C^{1,\gamma}_{\loc}(\R^3,\C)$ for some $\gamma \in [0,1]$. Then, for any compact set $K \subset \R^3$ and $R>0$, there exists a positive constant $C$ depending on $R,K,A,u$, such that
		\[
		\left|\e^{-iy\cdot A\left(x +\frac{y}{2}\right)}u(x+y) + \e^{iy\cdot A\left(x -\frac{y}{2}\right)}u(x-y) - 2u(x)\right| \leq C|y|^{1+\gamma},
		\]
		for every $x \in K$ and $y \in B_R(0)$.
	\end{Lem}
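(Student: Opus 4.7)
The plan is to recognize the expression in the statement as a symmetric second-order finite difference at the origin of the function $\varphi_x(y) := \e^{-iy\cdot A(x+y/2)}u(x+y)$, since one has $\varphi_x(0)=u(x)$ and the displayed quantity equals $\varphi_x(y)+\varphi_x(-y)-2\varphi_x(0)$. For a function with $\gamma$-Hölder gradient in $y$, such a symmetric second difference scales like $|y|^{1+\gamma}$, so the proof reduces to showing that $\nabla_y\varphi_x$ is $\gamma$-Hölder in $y$ with a constant independent of $x\in K$.

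First I would introduce the compact neighborhood $K':=\{z\in\R^3 : d(z,K)\leq R\}$ and note that, since $A\in C^2(\R^3,\C)$ and $u\in C^{1,\gamma}_{\loc}(\R^3,\C)$, the quantities $\|A\|_{C^2(K')}$, $\|u\|_{C^1(K')}$, and the $\gamma$-Hölder seminorm of $\nabla u$ on $K'$ are all finite. A direct computation gives
\[
\nabla_y\varphi_x(y)=\e^{-iy\cdot A(x+y/2)}\Bigl\{-i\bigl[A(x+y/2)+\tfrac{1}{2}(DA)(x+y/2)^T y\bigr]u(x+y)+\nabla u(x+y)\Bigr\},
\]
in which the only factor that is merely $\gamma$-Hölder in $y$ is $\nabla u(x+y)$; all other factors are $C^1$ (and hence Lipschitz, a fortiori $\gamma$-Hölder) in $y$ on $B_R(0)$, with bounds controlled by $\|A\|_{C^2(K')}$, $\|u\|_{C^1(K')}$, and $R$. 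An application of the product rule for Hölder seminorms, in the same spirit as Lemma \ref{Lemma_2.4}, then yields a constant $C=C(R,K,A,u)$ such that
\[
|\nabla_y\varphi_x(y_2)-\nabla_y\varphi_x(y_1)|\leq C|y_2-y_1|^\gamma \qquad \text{for all } x\in K, \; y_1,y_2\in\overline{B_R(0)}.
\]
Writing the symmetric second difference in integral form,
\[
\varphi_x(y)+\varphi_x(-y)-2\varphi_x(0)=\int_0^1\bigl[\nabla_y\varphi_x(ty)-\nabla_y\varphi_x(-ty)\bigr]\cdot y\dd t,
\]
and inserting the Hölder estimate gives $\int_0^1 C(2t|y|)^\gamma|y|\dd t\leq C|y|^{1+\gamma}$, which is the claim.

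The main difficulty is purely bookkeeping: one has to ensure the Hölder constant of $\nabla_y\varphi_x$ is uniform in $x\in K$. This is handled by performing all estimates on the fixed compact set $K'$, which absorbs both the translation $x$ and the range of $y\in B_R(0)$, and by using that $u\in C^{1,\gamma}_{\loc}$ packages exactly the required uniform Hölder seminorm of $\nabla u$ on $K'$. No deeper analytical input is required beyond the product-rule Hölder estimate already exploited in the preceding Lemma \ref{Lemma_2.4}.
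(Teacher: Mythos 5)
Your proof is correct, and it follows the route the paper intends: the lemma is cited from \cite{dASq} without proof, but the paper places it immediately after Lemma~\ref{Lemma_2.4} and labels it ``a consequence of the previous Lemma,'' which is exactly what you reconstruct — recognize the left-hand side as the symmetric second difference $\varphi_x(y)+\varphi_x(-y)-2\varphi_x(0)$, reduce it via the fundamental theorem of calculus to a uniform $\gamma$-H\"older estimate on $\nabla_y\varphi_x$, and obtain that estimate from a product-rule-for-H\"older argument of the same type as Lemma~\ref{Lemma_2.4}, carried out on the enlarged compact set $K'$. The uniformity in $x\in K$ is handled correctly, and the integral identity and the resulting $\int_0^1 C(2t|y|)^\gamma|y|\dd t$ bound are exact.
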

	Now, for $A$ and $u$ smooth enough, we show an alternative definition of the operator defined in \eqref{pseudo:mag:Sch:order:s}, which allows for better handling of computation, useful in the proof of the main Theorem of this subsection.
	\begin{Prop}
		\label{remove:singularity}
		Let $A \in C^2(\R^3,\C)$, $m>0$ and $u \in L^{\infty}(\R^3,\C) \cap C^{1,\gamma}_{\loc}(\R^3,\C)$, for some $\gamma\in (0,1]$ with $\gamma > 2s-1$. Then, for a.e. $x\in\R^3$,
		\begin{multline*}
			(-\Delta + m^2)_A^su(x)\\
			=-\frac{C_s}{2}m^{\frac{3+2s}{2}}\int_{\R^3}\frac{\e^{-iz\cdot A\left(x +\frac{z}{2}\right)}u(x+z) + \e^{iz\cdot A\left(x -\frac{z}{2}\right)}u(x-z) - 2u(x)}{|z|^{\frac{3+2s}{2}}}\bessel{\frac{3+2s}{2}}{m|z|} \dd z + m^{2s}u(x).
		\end{multline*}
	\end{Prop}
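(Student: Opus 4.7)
The plan is to start from the principal-value definition \eqref{pseudo:mag:Sch:order:s} and turn it into the claimed symmetric, non-singular integral. First I would perform the change of variables $z = y - x$, using that $|x-y| = |z|$, $(x+y)/2 = x + z/2$, and $(x-y)\cdot A((x+y)/2) = -z \cdot A(x+z/2)$, to rewrite
\begin{equation*}
(-\Delta+m^2)_A^s u(x)
= C_s m^{\frac{3+2s}{2}} \lim_{\eps\searrow 0}\int_{B^c_\eps(0)} \frac{u(x) - \e^{-iz\cdot A(x+z/2)} u(x+z)}{|z|^{\frac{3+2s}{2}}}\bessel{\frac{3+2s}{2}}{m|z|}\dd z + m^{2s}u(x).
\end{equation*}

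Next, since $B^c_\eps(0)$ is invariant under $z\mapsto -z$ and the weight $|z|^{-(3+2s)/2}\bessel{\frac{3+2s}{2}}{m|z|}$ is even in $z$, I would symmetrize by averaging the integrand with its image under $z \mapsto -z$. This produces exactly $-\tfrac12\bigl(\e^{-iz\cdot A(x+z/2)}u(x+z)+\e^{iz\cdot A(x-z/2)}u(x-z)-2u(x)\bigr)$ in the numerator, so that, modulo the sign and the factor $1/2$, the symmetrized expression matches the formula in the statement.

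The core analytic step is to justify interchanging the limit with the integral, which reduces to showing that the symmetrized integrand is in $L^1(\R^3)$ for each fixed $x$. Near the origin, Lemma \ref{numerator:control:origin} bounds the numerator by $C|z|^{1+\gamma}$ on any compact neighborhood of $x$; combined with the asymptotic $\bessel{\frac{3+2s}{2}}{m|z|}\sim C|z|^{-(3+2s)/2}$ from \eqref{Gauntest1}, the integrand is controlled near $0$ by $C|z|^{1+\gamma-(3+2s)}$, which is integrable over $B_1(0)$ precisely when $\gamma > 2s-1$. This integrability condition at the origin is the main technical obstacle, and it is exactly the hypothesis imposed on $\gamma$. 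Away from the origin, the exponential decay of $\bessel{\frac{3+2s}{2}}{m|z|}$ given by \eqref{Gauntest2}, together with $u \in L^\infty(\R^3,\C)$, ensures the tail is absolutely convergent. Dominated convergence then lets me move the limit inside the symmetrized integral and conclude.
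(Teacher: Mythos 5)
Your proposal is correct and follows essentially the same route as the paper's proof: change of variable, symmetrization via $z\mapsto -z$ (the paper does this by performing the two substitutions $z=y-x$ and $z=x-y$ and averaging), control of the numerator near the origin via Lemma~\ref{numerator:control:origin} together with the Bessel asymptotics, the $L^\infty$ bound for the tail, and finally dominated convergence. The only cosmetic difference is that the paper uses the uniform bound \eqref{Bessel:bound} for the Bessel factor both near $0$ and at infinity, whereas you invoke \eqref{Gauntest1} near the origin and the exponential decay \eqref{Gauntest2} at infinity — both choices yield the same integrability.
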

	\begin{proof}
		For every $\eps>0$ we consider the \textit{auxiliary} function $g_{\eps}:\R^3 \to \R$ defined as
		\begin{equation}
			\label{auxiliary:function}
			g_{\eps}(x):=\int_{\R^3}\frac{u(x)-\e^{i(x-y)\cdot A\left(\frac{x+y}{2}\right)}u(y)}{|x-y|^{\frac{3+2s}{2}}}\bessel{\frac{3+2s}{2}}{m|x-y|} 1_{B^c_{\eps}(x)}(y) \dd y
		\end{equation}
		Performing the two changes of variable $z=y-x$ and $z=x-y$ we obtain
		\begin{equation}
			\label{first:cov}
			g_{\eps}(x) = \int_{\R^3}\frac{u(x)-\e^{-iz\cdot A\left(x+\frac{z}{2}\right)}u(x+z)}{|z|^{\frac{3+2s}{2}}}\bessel{\frac{3+2s}{2}}{m|z|} 1_{B^c_{\eps}(0)}(z) \dd z 
		\end{equation}
		and
		\begin{equation}
			\label{second:cov}
			g_{\eps}(x) = \int_{\R^3}\frac{u(x)-\e^{iz\cdot A\left(x-\frac{z}{2}\right)}u(x-z)}{|z|^{\frac{3+2s}{2}}}\bessel{\frac{3+2s}{2}}{m|z|} 1_{B^c_{\eps}(0)}(z) \dd z.
		\end{equation}
		Putting together \eqref{first:cov} and \eqref{second:cov}, we get
		\[
		g_{\eps}(x) = -\frac12\int_{\R^3}\frac{\e^{-iz\cdot A\left(x+\frac{z}{2}\right)}u(x+z) + \e^{iz\cdot A\left(x-\frac{z}{2}\right)}u(x-z)- 2u(x)}{|z|^{\frac{3+2s}{2}}}\bessel{\frac{3+2s}{2}}{m|z|} 1_{B^c_{\eps}(0)}(z)\dd z.
		\]
		Now, for $x \in \R^3$,
		\begin{multline*}
			\frac{\e^{-iz\cdot A\left(x+\frac{z}{2}\right)}u(x+z) + \e^{iz\cdot A\left(x-\frac{z}{2}\right)}u(x-z)- 2u(x)}{|z|^{\frac{3+2s}{2}}}\bessel{\frac{3+2s}{2}}{m|z|}1_{B^c_{\eps}(0)}(z) \\
			\to \frac{\e^{-iz\cdot A\left(x+\frac{z}{2}\right)}u(x+z) + \e^{iz\cdot A\left(x-\frac{z}{2}\right)}u(x-z)- 2u(x)}{|z|^{\frac{3+2s}{2}}}\bessel{\frac{3+2s}{2}}{m|z|} \text{ for a.e. $z \in \R^3$ as $\eps \to 0^+$, }
		\end{multline*}
		and, taking $R>0$ as in Lemma \ref{numerator:control:origin}\footnote{Here, since we are interested in the pointwise convergence, we can consider $K=\{x\}$ in Lemma \ref{numerator:control:origin}.}
		and using \eqref{Bessel:bound} 
		\begin{equation}
			\label{auxiliary:funct:integrability}
			\begin{aligned}
				&\frac{\left|\e^{-iz\cdot A\left(x+\frac{z}{2}\right)}u(x+z) + \e^{iz\cdot A\left(x-\frac{z}{2}\right)}u(x-z)- 2u(x)\right|}{|z|^{\frac{3+2s}{2}}}\bessel{\frac{3+2s}{2}}{m|z|}1_{B^c_{\eps}(0)}(z) \\
				& \qquad =
				\frac{\left|\e^{-iz\cdot A\left(x+\frac{z}{2}\right)}u(x+z) + \e^{iz\cdot A\left(x-\frac{z}{2}\right)}u(x-z)- 2u(x)\right|}{|z|^{\frac{3+2s}{2}}}\bessel{\frac{3+2s}{2}}{m|z|}1_{B^c_{\eps}(0)}(z)1_{B_R(0)}(z) \\
				& \qquad \qquad + \frac{\left|\e^{-iz\cdot A\left(x+\frac{z}{2}\right)}u(x+z) + \e^{iz\cdot A\left(x-\frac{z}{2}\right)}u(x-z)- 2u(x)\right|}{|z|^{\frac{3+2s}{2}}}\bessel{\frac{3+2s}{2}}{m|z|}1_{B^c_{\eps}(0)}(z)1_{B^c_R(0)}(z)\\
				& \qquad \leq \frac{C}{|z|^{\frac{3+2s}{2}-1-\gamma}}\bessel{\frac{3+2s}{2}}{m|z|}1_{B_R(0)}(z)  + \frac{4\|u\|_{L^{\infty}(\R^3,\C)}}{|z|^{\frac{3+2s}{2}}}\bessel{\frac{3+2s}{2}}{m|z|}1_{B^c_R(0)}(z)\\
				& \qquad \leq \frac{C}{|z|^{2+2s-\gamma}}1_{B_R(0)}(z) + \frac{4\|u\|_{L^{\infty}(\R^3,\C)}}{|z|^{3+2s}}1_{B^c_R(0)}(z) \in L^1(\R^3,\C),
			\end{aligned}
		\end{equation}
		since $\gamma > 2s-1$ and $s>0$, respectively. Therefore, for almost every $x \in \R^3$, by the Dominated Convergence Theorem,
		\begin{multline*}
			\left(-\Delta_A + m^2\right)^s u(x) =
			C_s{m}^{\frac{3+2s}{2}} \lim_{\eps \to 0^+}g_\eps (x) + m^{2s}u(x)\\
			=
			-\frac{C_s}{2}m^{\frac{3+2s}{2}}\int_{\R^3}\frac{\e^{-iz\cdot A\left(x +\frac{z}{2}\right)}u(x+z) + \e^{iz\cdot A\left(x -\frac{z}{2}\right)}u(x-z) - 2u(x)}{|z|^{\frac{3+2s}{2}}}\bessel{\frac{3+2s}{2}}{m|z|} \dd z
			+ m^{2s}u(x).
		\end{multline*}
	\end{proof}
	Now, we are ready to prove the main result of this subsection.
	\begin{proof}[Proof of Theorem \ref{ABMP:Th}]
		For any $u \in C^{\infty}_c(\R^3,\C)$, by Proposition \ref{remove:singularity}, we can write 
		\begin{align*}
			(-\Delta + m^2)_A^su(x) 
			=&-\frac{C_s}{2}m^{\frac{3+2s}{2}}\int_{B^c_1(0)}\frac{\e^{-iz\cdot A\left(x +\frac{z}{2}\right)}u(x+y) + \e^{iz\cdot A\left(x -\frac{z}{2}\right)}u(x-z) - 2u(x)}{|z|^{\frac{3+2s}{2}}}\bessel{\frac{3+2s}{2}}{m|z|} \dd z\\ 
			&-\frac{C_s}{2}m^{\frac{3+2s}{2}}\int_{B_1(0)}\frac{\e^{-iz\cdot A\left(x +\frac{z}{2}\right)}u(x+y) + \e^{iz\cdot A\left(x -\frac{z}{2}\right)}u(x-z) - 2u(x)}{|z|^{\frac{3+2s}{2}}}\bessel{\frac{3+2s}{2}}{m|z|} \dd z\\
			&+ m^{2s}u(x).
		\end{align*}
		Using the monotonicity of $\mathcal{K}_\nu$ with respect to $\nu$ and \eqref{Bessel:bound} we have
		\begin{multline*}
			\left|\frac{C_s}{2}{m}^{\frac{3+2s}{2}}\int_{B^c_1(0)} \frac{\e^{-iz \cdot A\left(x+\frac{z}{2}\right)}u(x+z) + \e^{iz \cdot A\left(x-\frac{z}{2}\right)}u(x-z) - 2u(x)}{|z|^{\frac{3+2s}{2}}}\bessel{\frac{3+2s}{2}}{m|z|} \dd z \right| \\
			\leq 2C_s{m}^{\frac{3+2s}{2}}\|u\|_{L^\infty(\R^3,\C)}\int_{B^c_1(0)} |z|^{-\frac{3+2s}{2}}\bessel{\frac{5}{2}}{m|z|} \dd z \leq C\frac{C_sm^{s-1}}{s+1}\|u\|_{L^\infty(\R^3,\C)}.
		\end{multline*}
		Since $\lim_{s \nearrow 1}\Gamma(1-s)=+\infty$ then, by \eqref{pse:rel:mag:con}, 
		\begin{equation}
			\label{fractional:constant:to:zero}
			\lim_{s \nearrow 1}C_s=0,
		\end{equation}
		and so it follows that
		\begin{equation*}
			\lim_{s \nearrow 1} \frac{C_s}{2}{m}^{\frac{3+2s}{2}} \int_{B^c_1(0)} \frac{\e^{-iz \cdot A\left(x+\frac{z}{2}\right)}u(x+z) + \e^{iz \cdot A\left(x-\frac{z}{2}\right)}u(x-z) - 2u(x)}{|z|^{\frac{3+2s}{2}}}\bessel{\frac{3+2s}{2}}{m|z|} \dd z = 0.
		\end{equation*}
		When we are close to zero, we need a careful analysis. First of all, setting
		\[
		f_1(z) = \e^{-iz\cdot A\left(x+\frac{z}{2}\right)}u(x+z),
		\]
		we observe that, for $k \in \set{1,2,3}$, the $k-$th partial derivative is
		\begin{equation*}
			\partial_k f_1(z) = \e^{-iz\cdot A\left(x+\frac{z}{2}\right)}\left[\left(-i A_k\left(x+\frac{z}{2}\right)-\frac{i}{2}z \cdot \partial_k A\left(x+\frac{z}{2}\right)\right)u(x+z) + \partial_k u(x+z)\right]
		\end{equation*}
		and, for $h \in \set{1,2,3}$, the second derivatives are given by
		\begin{equation*}
			\begin{aligned}
				\partial^2_{hk}f_1(z) & = \e^{-iz\cdot A\left(x+\frac{z}{2}\right)}\left[\partial^2_{hk}u(x+z) + \left(-iA_k\left(x+\frac{z}{2}\right) - \frac{i}{2}z \cdot \partial_kA\left(x+\frac{z}{2}\right)\right)\partial_hu(x+z) \right. \\
				& \left. + \left(-iA_h\left(x+\frac{z}{2}\right) - \frac{i}{2}z \cdot \partial_hA\left(x+\frac{z}{2}\right)\right)\partial_ku(x+z) \right. \\
				& \left. + \left(- \frac{i}{4}z \cdot \partial^2_{hk}A\left(x+\frac{z}{2}\right) -\frac{i}{2}\left(\partial_hA_k\left(x+\frac{z}{2}\right) +\partial_kA_h\left(x+\frac{z}{2}\right)\right) \right.\right. \\
				& \left.\left.- \left(A_k\left(x+\frac{z}{2}\right) + \frac{z}{2} \cdot \partial_kA\left(x+\frac{z}{2}\right)\right)\left(A_h\left(x+\frac{z}{2}\right) + \frac{z}{2} \cdot \partial_hA\left(x+\frac{z}{2}\right) \right) \right)u(x+z)\right]
			\end{aligned}
		\end{equation*}
		Thus, the Taylor's expansion, up to the second order, of $f_1(z)$ centered in $0$ gives
		\[
		f_1(z) = u(x) + \nabla_Au(x) \cdot z + \frac12 H_Au(x)z\cdot z + o(|z|^2),
		\]
		where $H_Au(x)$ is the $3 \times 3-$matrix whose general term is
		\[
		\mathfrak{h}_{hk}u(x) := \partial^2_{hk}u(x) - iA_k(x)\partial_hu(x) - iA_h(x)\partial_ku(x) - \frac{i}{2}\left(\partial_hA_k(x) + \partial_kA_h(x)\right)u(x) - A_k(x)A_h(x)u(x).
		\]
		Observe that $\operatorname{tr}H_Au = \Delta_Au$.\\
		Making the same computations for
		\[
		f_2(z) = \e^{iz\cdot A\left(x-\frac{z}{2}\right)}u(x-z)
		\]
		for $k \in \set{1,2,3}$, the $k-$th partial derivative is
		\begin{equation*}
			\partial_k f_2(z) = \left[\left(i A_k\left(x-\frac{z}{2}\right) - \frac{i}{2}z \cdot \partial_k A\left(x-\frac{z}{2}\right)\right)u(x-z) - \partial_k u(x-z)\right]\e^{iz\cdot A\left(x-\frac{z}{2}\right)}
		\end{equation*}
		and, for $h \in \set{1,2,3}$, the second derivatives are given by
		\begin{equation*}
			\begin{aligned}
				\partial^2_{hk}f_2(z) & = \e^{iz\cdot A\left(x-\frac{z}{2}\right)}\left[\partial^2_{hk}u(x-z) - \left(iA_k\left(x-\frac{z}{2}\right) - \frac{i}{2}z \cdot \partial_kA\left(x-\frac{z}{2}\right)\right)\partial_hu(x-z) \right. \\
				& \left. - \left(iA_h\left(x-\frac{z}{2}\right) - \frac{i}{2}z \cdot \partial_hA\left(x-\frac{z}{2}\right)\right)\partial_ku(x-z) \right. \\
				& \left. + \left(\frac{i}{4}z \cdot \partial^2_{hk}A\left(x-\frac{z}{2}\right) -\frac{i}{2}\left(\partial_hA_k\left(x-\frac{z}{2}\right) +\partial_kA_h\left(x-\frac{z}{2}\right)\right) \right.\right. \\
				& \left.\left.- \left(A_k\left(x-\frac{z}{2}\right) - \frac{z}{2} \cdot \partial_kA\left(x-\frac{z}{2}\right)\right)\left(A_h\left(x-\frac{z}{2}\right) - \frac{z}{2} \cdot \partial_hA\left(x-\frac{z}{2}\right) \right) \right)u(x-z)\right]
			\end{aligned}
		\end{equation*}
		we obtain,
		\[
		f_2(z) = u(x) - \nabla_Au(x) \cdot z + \frac12 H_Au(x)z\cdot z + o(|z|^2).
		\]
		Therefore, we have that
		\[
		\e^{-iz \cdot A\left(x+\frac{z}{2}\right)}u(x+z) + \e^{iz \cdot A\left(x-\frac{z}{2}\right)}u(x-z) - 2u(x) = H_Au(x)z\cdot z + o(|z|^2)
		\]
		and so, there exists $C>0$ such that
		\[
		\left|\e^{-iz \cdot A\left(x+\frac{z}{2}\right)}u(x+z) + \e^{iz \cdot A\left(x-\frac{z}{2}\right)}u(x-z) - 2u(x) - H_Au(x)z\cdot z\right| \leq C|z|^3 
		\]
		for $z \in B_1(0)$.\\
		Hence, using the monotonicity of $\mathcal{K}_\nu$ with respect to $\nu$ and \eqref{Bessel:bound}, we obtain
		\begin{multline*}
			\left|\int_{B_1(0)} \frac{\e^{-iz \cdot A\left(x+\frac{z}{2}\right)}u(x+z) + \e^{iz \cdot A\left(x-\frac{z}{2}\right)}u(x-z) - 2u(x) - H_Au(x)z \cdot z}{|z|^{\frac{3+2s}{2}}}\bessel{\frac{3+2s}{2}}{m|z|} \dd z \right| \\
			\leq C \int_{B_1(0)} |z|^{\frac{3-2s}{2}} \bessel{\frac{5}{2}}{m|z|} \dd z 
			\leq C \int_0^1 r^{1-s} \dd r = \frac{C}{2-s}.
		\end{multline*}
		Therefore, using again \eqref{fractional:constant:to:zero}, we obtain
		\begin{equation}
			\label{limit:on:the:ball}
			\begin{split}
				&\frac{C_s}{2}{m}^{\frac{3+2s}{2}}\int_{B_1(0)} \frac{\e^{-iz \cdot A\left(x+\frac{z}{2}\right)}u(x+z) + \e^{iz \cdot A\left(x-\frac{z}{2}\right)}u(x-z) - 2u(x) - H_Au(x)z\cdot z}{|z|^{\frac{3+2s}{2}}}\bessel{\frac{3+2s}{2}}{m|z|} \dd z \\&\qquad \to 0 \text{ as } s \nearrow 1. 
			\end{split}
		\end{equation}
		Thus, since, due to the oddness of the function $\mathfrak{h}_{hk}u(x)z_hz_k$ for $h \neq k$, we have
		\begin{equation*}
			\begin{aligned}
				&\lim_{s \nearrow 1} \frac{C_s}{2}{m}^{\frac{3+2s}{2}} \int_{B_1(0)} \frac{H_Au(x)z \cdot z}{|z|^{\frac{3+2s}{2}}}\bessel{\frac{3+2s}{2}}{m|z|} \dd z \\
				& \quad = \lim_{s \nearrow 1} \frac{C_s}{2}{m}^{\frac{3+2s}{2}} \sum_{h = 1}^3\int_{B_1(0)} \frac{\mathfrak{h}_{hh}u(x)z^2_h}{|z|^{\frac{3+2s}{2}}}\bessel{\frac{3+2s}{2}}{m|z|} \dd z \\
				& \quad = \tr H_Au(x)\lim_{s \nearrow 1} \frac{C_s}{2}{m}^{\frac{3+2s}{2}} \frac13\sum_{l = 1}^3\int_{B_1(0)} \frac{z^2_l}{|z|^{\frac{3+2s}{2}}}\bessel{\frac{3+2s}{2}}{m|z|} \dd z \\ 
				& \quad = \Delta_A u(x)\lim_{s \nearrow 1} \left(\frac{2\pi}{3}C_s{m}^{\frac{3+2s}{2}}\int_0^1 r^{\frac{5-2s}{2}}\bessel{\frac{3+2s}{2}}{mr} \dd r\right),
			\end{aligned}
		\end{equation*}
		then, from \eqref{limit:on:the:ball}, we obtain
		\begin{equation*}
			\begin{aligned}
				&-\lim_{s \nearrow 1} \frac{C_s}{2}{m}^{\frac{3+2s}{2}} \int_{B_1(0)} \frac{\e^{-iz \cdot A\left(x+\frac{z}{2}\right)}u(x+z) + \e^{iz \cdot A\left(x-\frac{z}{2}\right)}u(x-z) - 2u(x)}{|z|^{\frac{3+2s}{2}}}\bessel{\frac{3+2s}{2}}{m|z|} \dd z \\
				& \qquad = -\Delta_A u(x)\lim_{s \nearrow 1} \left(\frac{2\pi}{3}C_s{m}^{\frac{3+2s}{2}}\int_0^1 r^{\frac{5-2s}{2}}\bessel{\frac{3+2s}{2}}{mr} \dd r\right).
			\end{aligned}
		\end{equation*}
		But, 
		\[
		\lim_{s \nearrow 1} \left(\frac{2\pi}{3}C_s{m}^{\frac{3+2s}{2}}\int_0^1 r^{\frac{5-2s}{2}}\bessel{\frac{3+2s}{2}}{mr} \dd r\right) = 1.
		\]
		Indeed, by \eqref{I1:integral},
		\[
		\frac{2\pi}{3}C_sm^{\frac{3+2s}{2}}\int_0^{+\infty} r^{\frac{5-2s}{2}}\bessel{\frac{3+2s}{2}}{mr} \dd r = sm^{2(s-1)} \to 1 \text{ as } s \nearrow 1,
		\]
		and, arguing as in \eqref{I2:integral}, we get
		\[
		\frac{2\pi}{3}C_sm^{\frac{3+2s}{2}}\int_1^{+\infty}r^{\frac{5-2s}{2}}\bessel{\frac{3+2s}{2}}{mr} \dd r \to 0 \text{ as } s \nearrow 1,
		\]
		so that we can conclude.
	\end{proof}

	\section{Weak to strong}
	\label{section:pointwise:representation}
	In this section, we give the notion of weak solution and we show that, under suitable assumptions, such kind of solutions satisfies the equation pointwise.
	
	Let $H^{-s}_A(\R^3,\C)$ be the dual space of $H^s_A(\R^3,\C)$. 
	\begin{Def}
		If $f \in H^{-s}_A(\R^3,\C)$, then $u \in H^s_A(\R^3,\C)$ is a {\em weak solution} to
		\begin{equation}
			\label{SME}
			(-\Delta + m^2)_A^su = f \quad \text{ in } \R^3
		\end{equation}
		if, for every $v \in H^s_A(\R^3,\C)$,
		\begin{multline}
			\label{weak:sol}
			\frac{C_s}{2}m^{\frac{3+2s}{2}}\Re\int_{\R^3 \times \R^3} \frac{\left(\e^{-i(x-y)\cdot A\left(\frac{x+y}{2}\right)}u(x) - u(y)\right)\overline{\left(\e^{-i(x-y)\cdot A\left(\frac{x+y}{2}\right)}v(x) - v(y)\right)}}{|x-y|^{\frac{3+2s}{2}}}\bessel{\frac{3+2s}{2}}{m|x-y|} \dd x \dd y\\
			\qquad + m^{2s}\Re\int_{\R^3}u(x)\overline{v(x)} \dd x = \Re\int_{\R^3}f(x)\overline{v(x)} \dd x. 
		\end{multline}
	\end{Def}
	Then, we have
	\begin{Th}[Pointwise representation]
		\label{th:pointwise:representation}
		Let $u \in H^s_A(\R^3,\C)$ be a weak solution to \eqref{SME}. Assume that $A \in C^2(\R^3,\C)$ and $u \in L^{\infty}(\R^3,\C) \cap C^{1,\gamma}_{\loc}(\R^3,\C)$, for some $\gamma \in (0,1]$ with $\gamma > 2s-1$. Then, $u$ solves \eqref{SME} pointwise a.e. in $\R^3$.
	\end{Th}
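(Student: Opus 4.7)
The plan is to test the weak formulation \eqref{weak:sol} against an arbitrary $v\in C^\infty_c(\R^3,\C)$ and rewrite the bilinear side as $\Re\int_{\R^3}\overline{v(x)}(-\Delta+m^2)_A^s u(x)\,dx$, where the right-hand factor is the pointwise expression furnished by Proposition \ref{remove:singularity} (whose hypotheses are exactly our regularity on $A$ and $u$, including the key condition $\gamma>2s-1$). Once this identity is established, the fact that $\Re\int \overline{v}[(-\Delta+m^2)_A^s u - f]\,dx=0$ for every such $v$, applied to both $v=w$ and $v=iw$ with $w\in C^\infty_c(\R^3,\R)$, yields the pointwise equation a.e.

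The main computation is a Fubini/symmetrization argument. First I would change variables $y=x+z$ in \eqref{weak:sol} so that the Gagliardo-type integrand becomes
\[
(\psi(x,z)u(x)-u(x+z))\,\overline{(\psi(x,z)v(x)-v(x+z))},\qquad \psi(x,z):=e^{iz\cdot A(x+z/2)}.
\]
Expanding this product and using the identity $|\psi|=1$, one is left, after the further change $y\mapsto y+z$ in one term, with an $x$-integrated expression that exactly coincides with $-\int_{\R^3}\overline{v(x)}\,F_u(x,z)\,dx$, where
\[
F_u(x,z):=e^{-iz\cdot A(x+z/2)}u(x+z)+e^{iz\cdot A(x-z/2)}u(x-z)-2u(x)
\]
is precisely the integrand appearing in Proposition \ref{remove:singularity}. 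Dividing by $|z|^{(3+2s)/2}\mathcal{K}_{(3+2s)/2}(m|z|)^{-1}$ and integrating in $z$, the magnetic Gagliardo part of $(u,v)_{A,s}$ matches $-\tfrac{C_s}{2}m^{(3+2s)/2}$ times the $z$-integral of $\int\overline{v}F_u\,dx$. Adding the $m^{2s}\Re\int u\overline{v}\,dx$ contribution on both sides produces the desired duality identity $(u,v)_{A,s}=\Re\int_{\R^3}\overline{v(x)}(-\Delta+m^2)_A^s u(x)\,dx$.

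The one point requiring care is justifying the exchange of order of integration. The iterated integral must be shown to be absolutely convergent, so that Fubini applies and the change of variables $y=x\pm z$ preserves the value. For this I would exploit that $v$ has compact support, so $\overline{v(x)}F_u(x,z)$ is supported in $x\in K:=\supp v$; on $K$, $u$ enjoys the $C^{1,\gamma}$ bound of Lemma \ref{numerator:control:origin}, yielding $|F_u(x,z)|\le C|z|^{1+\gamma}$ for $|z|\le R$ and $|F_u(x,z)|\le 4\|u\|_{L^\infty}$ for $|z|>R$. Combining with \eqref{Bessel:bound} and $\gamma>2s-1$ exactly as in \eqref{auxiliary:funct:integrability}, the double integral is absolutely convergent, so Fubini and the translation-invariant changes of variable are legitimate. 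This is the principal technical obstacle; beyond it, the symmetrization and the conclusion are algebraic.

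Finally, taking $v=w$ and then $v=iw$ with $w\in C^\infty_c(\R^3,\R)$ gives $\Re\int w\,g\,dx=0$ and $\Im\int w\,g\,dx=0$ for $g:=(-\Delta+m^2)_A^s u-f\in L^1_{\loc}(\R^3,\C)$ (the local integrability of $(-\Delta+m^2)_A^s u$ follows again from the bound \eqref{auxiliary:funct:integrability} in the pointwise representation). By the fundamental lemma of calculus of variations, $g=0$ a.e.\ in $\R^3$, which is the claim.
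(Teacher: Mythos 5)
Your proposal is correct and takes essentially the same route as the paper: test the weak formulation against $v\in C^\infty_c(\R^3,\C)$, symmetrize the bilinear form via Fubini and translation so that the second-order difference $F_u$ of Proposition \ref{remove:singularity} appears, identify the result with $\Re\int\overline{v}\,(-\Delta+m^2)_A^s u\,dx$, and conclude by the fundamental lemma applied to $v=w$ and $v=iw$. The one implementation difference is that the paper inserts the truncation $1_{B^c_\eps(x)}(y)$ before splitting the product (so each piece is manifestly finite, with Fubini applied term by term and the limit $\eps\to 0^+$ taken via the $L^1(K)$ convergence of $g_\eps$), whereas you symmetrize directly and justify the interchange from the absolute convergence of the full magnetic Gagliardo form together with the $|F_u(x,z)|\lesssim\min(|z|^{1+\gamma},1)$ bound; both are valid, though the paper's $\eps$-regularization handles more transparently the fact that the four terms of the expanded product are not individually integrable over $(x,y)$ near the diagonal.
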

	\begin{proof}
		Let us consider equation \eqref{weak:sol} for $v \in C^{\infty}_c(\R^3,\C)$ and let $K:=\supp v$. Since
		\[
		C_sm^{\frac{3+2s}{2}}g_{\eps}(x) + m^{2s}u(x) \to (-\Delta + m^2)_A^su(x) \text{ a.e. in } \R^3 \text{ as } \eps \to 0^+,
		\]
		where $g_{\eps}$ is the auxiliary function defined in \eqref{auxiliary:function}, see Proposition \ref{remove:singularity}, and, arguing as in \eqref{auxiliary:funct:integrability}, 
		\[
		|g_{\eps}(x)| \leq C\left(\int_{\R^3}\frac{1}{|z|^{2+2s-\gamma}}1_{B_R(0)}(z) \dd z + \int_{\R^3}\frac{\|u\|_{L^{\infty}(\R^3,\C)}}{|z|^{3+2s}}1_{B^c_R(0)}(z) \dd z \right) \leq C \text{ for } x \in K,
		\]
		and by the Dominated Convergence Theorem,
		\begin{equation}
			\label{L1:strong:convg}
			C_sm^{\frac{3+2s}{2}}g_{\eps} + m^{2s}u \to (-\Delta + m^2)_A^su \text{ in } L^1(K) \text{ as } \eps \to 0^+.
		\end{equation}
		Thus, since
		\begin{equation*}
			\begin{aligned}
				&-\left(u(y) - \e^{i(y-x)\cdot A\left(\frac{y+x}{2}\right)}u(x)\right)\overline{\e^{i(y-x)\cdot A\left(\frac{y+x}{2}\right)}v(x)}\bessel{\frac{3+2s}{2}}{m|y-x|}1_{B^c_{\eps}(y)}(x)\\
				& \qquad = \left(u(x) - \e^{i(x-y)\cdot A\left(\frac{x+y}{2}\right)}u(y)\right)\overline{v(x)}\bessel{\frac{3+2s}{2}}{m|x-y|}1_{B^c_{\eps}(x)}(y),
			\end{aligned}
		\end{equation*}
		then, by Fubini-Tonelli Theorem,
		\begin{align*}
			&\Re\int_{\R^3 \times \R^3} \frac{\left(\e^{-i(x-y)\cdot A\left(\frac{x+y}{2}\right)}u(x) - u(y)\right)\overline{\left(\e^{-i(x-y)\cdot A\left(\frac{x+y}{2}\right)}v(x) - v(y)\right)}}{|x-y|^{\frac{3+2s}{2}}}\bessel{\frac{3+2s}{2}}{m|x-y|} \dd x \dd y\\
			& = \Re\int_{\R^3 \times \R^3} \frac{\left(u(x) - \e^{i(x-y)\cdot A\left(\frac{x+y}{2}\right)}u(y)\right)\overline{\left(v(x) - \e^{i(x-y)\cdot A\left(\frac{x+y}{2}\right)}v(y)\right)}}{|x-y|^{\frac{3+2s}{2}}}\bessel{\frac{3+2s}{2}}{m|x-y|} \dd x \dd y\\
			& = \lim_{\eps \to 0^+}\Re\int_{\R^3 \times \R^3} \frac{\left(u(x) - \e^{i(x-y)\cdot A\left(\frac{x+y}{2}\right)}u(y)\right)\overline{\left(v(x) - \e^{i(x-y)\cdot A\left(\frac{x+y}{2}\right)}v(y)\right)}}{|x-y|^{\frac{3+2s}{2}}}\bessel{\frac{3+2s}{2}}{m|x-y|}1_{B^c_{\eps}(x)}(y) \dd x \dd y\\
			& = \lim_{\eps \to 0^+}\left( \Re\int_{\R^3 \times \R^3} \frac{\left(u(x) - \e^{i(x-y)\cdot A\left(\frac{x+y}{2}\right)}u(y)\right)\overline{v(x)}}{|x-y|^{\frac{3+2s}{2}}}\bessel{\frac{3+2s}{2}}{m|x-y|}1_{B^c_{\eps}(x)}(y) \dd x \dd y \right.\\
			& \qquad - \left.\Re\int_{\R^3 \times \R^3} \frac{\left(u(x) - \e^{i(x-y)\cdot A\left(\frac{x+y}{2}\right)}u(y)\right)\overline{\e^{i(x-y)\cdot A\left(\frac{x+y}{2}\right)}v(y)}}{|x-y|^{\frac{3+2s}{2}}}\bessel{\frac{3+2s}{2}}{m|x-y|}1_{B^c_{\eps}(x)}(y) \dd x \dd y\right)\\
			& = \lim_{\eps \to 0^+}\left(\Re\int_{\R^3}g_{\eps}(x)\overline{v(x)} \dd x \right.\\
			& \qquad - \left.\Re\int_{\R^3 \times \R^3} \frac{\left(u(y) - \e^{i(y-x)\cdot A\left(\frac{y+x}{2}\right)}u(x)\right)\overline{\e^{i(y-x)\cdot A\left(\frac{y+x}{2}\right)}v(x)}}{|y-x|^{\frac{3+2s}{2}}}\bessel{\frac{3+2s}{2}}{m|x-y|}1_{B^c_{\eps}(y)}(x) \dd y \dd x\right)\\
			& = \lim_{\eps \to 0^+} \left(\Re\int_{\R^3}g_{\eps}(x)\overline{v(x)} \dd x \right.\\
			& \qquad \left. + \Re\int_{\R^3 \times \R^3} \frac{\left(u(x) - \e^{i(x-y)\cdot A\left(\frac{x+y}{2}\right)}u(y)\right)\overline{v(x)}}{|x-y|^{\frac{3+2s}{2}}}\bessel{\frac{3+2s}{2}}{m|x-y|}1_{B^c_{\eps}(x)}(y) \dd x \dd y \right) \\
			& = 2\lim_{\eps \to 0^+} \Re\int_{\R^3}g_{\eps}(x)\overline{v(x)} \dd x.
		\end{align*}
		Therefore, using \eqref{L1:strong:convg}, we get
		\begin{align*}
			&\frac{C_s}{2}m^{\frac{3+2s}{2}}\Re\int_{\R^3 \times \R^3} \frac{\left(\e^{-i(x-y)\cdot A\left(\frac{x+y}{2}\right)}u(x) - u(y)\right)\overline{\left(\e^{-i(x-y)\cdot A\left(\frac{x+y}{2}\right)}v(x) - v(y)\right)}}{|x-y|^{\frac{3+2s}{2}}}\bessel{\frac{3+2s}{2}}{m|x-y|} \dd x \dd y\\
			& \qquad\qquad + m^{2s} \Re\int_{\R^3}u(x)\overline{v(x)} \dd x \\
			& \qquad= C_sm^{\frac{3+2s}{2}}\lim_{\eps \to 0^+} \Re\int_{\R^3}g_{\eps}(x)\overline{v(x)} \dd x + m^{2s} \Re\int_{\R^3}u(x)\overline{v(x)} \dd x\\
			& \qquad = \Re\int_{\R^3}(-\Delta + m^2)_A^su(x)\overline{v(x)} \dd x
		\end{align*}
		and so, by \eqref{weak:sol}, for all $v \in C^{\infty}_c(\R^3,\C)$,
		\begin{align*}
			0&=\frac{C_s}{2}m^{\frac{3+2s}{2}}\Re\int_{\R^3 \times \R^3} \frac{\left(\e^{-i(x-y)\cdot A\left(\frac{x+y}{2}\right)}u(x) - u(y)\right)\overline{\left(\e^{-i(x-y)\cdot A\left(\frac{x+y}{2}\right)}v(x) - v(y)\right)}}{|x-y|^{\frac{3+2s}{2}}}\bessel{\frac{3+2s}{2}}{m|x-y|} \dd x \dd y\\
			& \qquad + m^{2s}\Re\int_{\R^3}u(x)\overline{v(x)} \dd x - \Re\int_{\R^3}f(x)\overline{v(x)} \dd x \\
			& = \Re\int_{\R^3}(-\Delta + m^2)_A^su(x)\overline{v(x)} \dd x - \Re\int_{\R^3}f(x)\overline{v(x)} \dd x.    
		\end{align*}
		Hence
		\[
		(-\Delta + m^2)_A^su = f \text{ a.e. in } \R^3.
		\]
	\end{proof}

	\section{Applications to some semilinear equations}
	\label{Applications}
	
	Taking into account the functional setting introduced in Section \ref{Functional:setting:sec}, our aim is now to find ground state solutions to \eqref{eq:semilinear} and \eqref{eq:Choquard}.

	The existence results mentioned in the introduction will be found in the radially symmetric setting and in the general one.
	
	Moreover, the magnetic potential will play a crucial role on these arguments.\\
	In the radial case, to apply the Principle of Symmetric Criticality, we will suppose the potential $A$ satisfies the condition
	\begin{equation}
		\label{isometry}
		g(x-y) \cdot A\left(g\frac{x+y}{2}\right) = (x-y) \cdot A\left(\frac{x+y}{2}\right) \text{ for all } x,y \in \R^3, g \in O(3).
	\end{equation}
	Observe that, condition \eqref{isometry} is satisfied if $A$ is equivariant and so, for instance, if $A(x) = a(|x|)x$, where $a:[0,+\infty) \to \R$ and $x \in \R^3$ (see also \cite[Lemma 2.1]{BaDoPlRe}).
	
	If no symmetry assumption is assumed, we will consider the following two cases.\\
	In the first one, we consider linear potential, i.e.
	\begin{equation}
		\label{hyp:A:linear}\tag{$\mathcal{A}_\ell$}
		A(x)=Ax, \, x \in \R^3, \text{ where $A$ is a $3 \times 3$ real symmetric constant matrix}\footnote{In such a case the magnetic field $B=\nabla\times A=0$.}.
	\end{equation}
	In the second one, we ask for a good behavior of the potential at infinity, in the sense of the following definition (see \cite[Defintion 4.1]{EstebanLions} and \cite[Definition 1.1]{dASq}).
	\begin{Def}
		\label{assumption:A}
		We say that $A$ satisfies assumption {\rm (\hypertarget{Ab}{$\cA_b$})}, if for any unbounded sequence $\Xi=\{\xi_n\}_n \subset \R^3$ there exists a sequence $\{H_n\}_n \subset \R^3$ and a function $A_\Xi : \R^3 \to \R^3$ such that:
		\begin{enumerate}
			\item $\lim_n A_n(x) = A_\Xi(x)$ for all $x \in \R^3$, 
			\item $\sup_{n \in \N}\|A_n\|_{L^{\infty}(K,\R^3)} < +\infty$ for every compact set $K$, 
		\end{enumerate}
		where $A_n(x):=A(x + \xi_n) + H_n$ and $\{\xi_n\}_n$ is a subsequence of $\Xi$ such that $|\xi_n| \to +\infty$.
	\end{Def}
	Observe that, if $A$ admits limit as $|x| \to +\infty$, then it satisfies assumption {\rm (\hyperlink{Ab}{$\cA_b$})
	}. 
	
	The rest of this section will be divided in two parts, in which we separately consider the two equations \eqref{eq:semilinear} and \eqref{eq:Choquard} respectively, introducing firstly the minimization problem that we want to solve, and then stating and proving the existence results.

	\subsection{The pure power nonlinearity}
	\label{subsec:semilinear}
	
	Let us formally introduce the minimization problem associated to the equation \eqref{eq:semilinear}. Its weak solutions will be sought as minimizers of the $H^s_A(\R^3,\C)$ norm on $\cS:=\left\{u \in H^s_A(\R^3,\C) : \|u\|_{L^p(\R^3,\C)}=1\right\}$.
	Indeed, if such minimizers exist, then by Multiplier Lagrange Theorem and using a rescaling argument, we obtain (weak) solutions to \eqref{eq:semilinear}. Moreover, in this type of problems, it is well known that such minimizers give rise to {\em ground state solutions}.
	
	Let 
	\begin{equation*}
		\cM_A:=\inf_{u \in \cS} \|u\|^2_{A,s}. 
	\end{equation*}
	
	Observe that $\cM_A > 0$.
	Indeed, by Lemma \ref{magn:sob:emb}, there exists $C>0$ such that for every $u \in \cS$
	\[
	\|u\|^2_{A,s} \geq C^2\|u\|^2_{L^p(\R^3,\C)} = C^2>0.
	\]
	
	Moreover, setting 
	\begin{equation*}
		\cM_{\lambda,A} := \inf_{u \in \cS_{\lambda}}\|u\|_{A,s}^2,
		\quad
		\text{where }
		\cS_{\lambda} :=\left\{u \in H^s_A(\R^3,\C) : \|u\|^p_{L^p(\R^3,\C)} = \lambda \right\} \text{ and } \lambda > 0,
	\end{equation*}
	it holds 
	\begin{equation}
		\label{rescaled:min:prob}
		\cM_{\lambda,A} = \lambda^{\frac{2}{p}}\cM_A.
	\end{equation}
	
	Now that the minimization problem has been formally settled, we can focus on the existence results for \eqref{eq:semilinear}, starting with the radial case. Of course, in this setting we assume \eqref{isometry}. First of all, we consider the restriction of our minimization problem to the radial functions: that is, we set
	\[
	\cS_{\rad} := \cS \cap H^s_{A,\rad}(\R^3,\C)
	\]
	and we want to minimize the norm of our space on $\cS_{\rad}$, i.e. we want to show that
	\begin{equation*}
		\cM_{A,\rad} := \inf_{u \in \cS_{\rad}}\|u\|^2_{A,s}
	\end{equation*}
	is attained.
	
	Exploiting the radiality property, we are able to recover the strong convergence of the minimizing sequences, making the first existence result quite straightforward.
	\begin{Th}
		\label{semilinear:symmetric:existence:th}
		Let $A:\R^3 \to \R^3$ be a vector field with locally bounded gradient and satisfying \eqref{isometry}. For any $p \in (2,2^*_s)$ there exists a nontrivial weak solution $u \in H^s_{A,\rad}(\R^3,\C)$ to \eqref{eq:semilinear}.
	\end{Th}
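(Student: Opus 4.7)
The plan is to minimize $\|\cdot\|_{A,s}^2$ on $\cS_{\rad}$, then promote the minimizer to a weak solution of \eqref{eq:semilinear} through a Lagrange multiplier rescaling combined with Palais' Principle of Symmetric Criticality.

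First I would take a minimizing sequence $\seq{u}\subset\cS_{\rad}$ for $\cM_{A,\rad}$. It is bounded in $H^s_A(\R^3,\C)$, so, by reflexivity of the Hilbert space (Proposition \ref{Prop:scalar:prod:Hilb:space}), up to a subsequence $u_n\weakto u$ weakly in $H^s_A(\R^3,\C)$, with $u$ radial because $H^s_{A,\rad}(\R^3,\C)$ is weakly closed. Lemma \ref{radial:comp:emb} gives $|u_n|\to|u|$ strongly in $L^p(\R^3,\R)$, so that
\[
\|u\|_{L^p(\R^3,\C)}=\||u|\|_{L^p(\R^3,\R)}=\lim_n\||u_n|\|_{L^p(\R^3,\R)}=1,
\]
hence $u\in\cS_{\rad}$; in particular $u\not\equiv 0$. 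Combined with the weak lower semicontinuity of the Hilbert norm, $\|u\|_{A,s}^2\le\liminf_n\|u_n\|_{A,s}^2=\cM_{A,\rad}$, so $u$ attains the positive infimum $\cM_{A,\rad}$.

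Next I would apply the Lagrange Multipliers Theorem on $H^s_{A,\rad}(\R^3,\C)$: there exists $\lambda\in\R$ such that
\[
(u,\vp)_{A,s}=\lambda\,\Re\!\int_{\R^3}|u|^{p-2}u\,\overline{\vp}\,\dd x\qquad\forall\,\vp\in H^s_{A,\rad}(\R^3,\C),
\]
and testing with $\vp=u$ yields $\lambda=\cM_{A,\rad}>0$. Setting $\tilde u:=\lambda^{1/(p-2)}u$, a direct rescaling shows that $\tilde u$ is a critical point in $H^s_{A,\rad}(\R^3,\C)$ of the $C^1$ Euler functional
\[
I(w):=\tfrac12\|w\|_{A,s}^2-\tfrac1p\|w\|_{L^p(\R^3,\C)}^p.
\]
To finish, I would invoke Palais' Principle of Symmetric Criticality with the unitary action $(g\cdot u)(x):=u(g^{-1}x)$ of $g\in O(3)$ on $H^s_A(\R^3,\C)$. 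After the change of variables $x\mapsto gx$, $y\mapsto gy$ in the double integral defining $[\cdot]_{A,s}$, the hypothesis \eqref{isometry} makes the phase $(x-y)\cdot A((x+y)/2)$ (and, trivially, $|x-y|$ together with the Jacobian) invariant, so $I$ is $O(3)$-invariant, and its fixed-point set is exactly $H^s_{A,\rad}(\R^3,\C)$. Consequently every critical point of $I|_{H^s_{A,\rad}}$ is a critical point of $I$ on the full $H^s_A(\R^3,\C)$, i.e.\ a weak solution of \eqref{eq:semilinear}; thus $\tilde u$ is the desired nontrivial radial solution.

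The main difficulty I anticipate is the compactness of the minimizing sequence: the magnetic Gagliardo seminorm is translation-unstable, so no concentration-compactness argument is available without exploiting the symmetry, and the entire convergence step rests on Lemma \ref{radial:comp:emb}, itself reducing, via the pointwise diamagnetic inequality \eqref{pointwise:diamagnetic}, to the classical compact embedding for radial $H^s$ functions. A subsidiary technical point is the verification that $[\cdot]_{A,s}$ is $O(3)$-invariant under \eqref{isometry}, a direct but careful change-of-variables computation inside the double integral.
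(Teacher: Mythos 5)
Your argument follows the same route as the paper: minimize $\|\cdot\|_{A,s}^2$ on $\cS_{\rad}$, use the compact radial embedding of Lemma \ref{radial:comp:emb} to recover the $L^p$ constraint in the weak limit, conclude by weak lower semicontinuity, and then pass from the constrained minimizer to a solution of \eqref{eq:semilinear} by Lagrange multipliers, a rescaling, and Palais' Principle of Symmetric Criticality; the paper states only the minimization step explicitly, relegating the last three steps to the discussion preceding the theorem, so your write-up is, if anything, more complete. One small point worth spelling out: since $u\mapsto|u|$ is nonlinear, Lemma \ref{radial:comp:emb} only gives a subsequence with $|u_n|\to v$ in $L^p(\R^3,\R)$ for some $v$; identifying $v=|u|$ requires the a.e.\ convergence $u_n\to u$, which the paper deduces from the local compact embedding of Lemma \ref{magn:sob:emb} before invoking Lemma \ref{radial:comp:emb}.
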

	\begin{proof}
		Let $\{u_n\}_n \subset \cS_{\rad}$ be a minimizing sequence, namely $\|u_n\|^2_{A,s} \to \cM_{A,\rad}$ as $n \to +\infty$. Therefore, up to a subsequence, $u_n \weakto u$ in $H^s_{A,\rad}(\R^3,\C)$, and so, by Lemma \ref{magn:sob:emb}, $u_n \to u$ almost everywhere, by Lemma \ref{radial:comp:emb}, $|u_n| \to v$ in $L^p(\R^3,\R)$, and, finally, due also to the previous pointwise convergence, $v=|u|$. Hence, $\|u\|_{L^p(\R^3,\C)}=1$ and 
		\[
		\cM_{A,\rad} \leq \|u\|^2_{A,s} \leq \liminf_n\|u_n\|^2_{A,s} = \cM_{A,\rad},
		\]
		concluding the proof.
	\end{proof}
	
	Now, we move on the setting where no symmetry assumption is assumed. The situation here is a bit harder, and it requires a more careful analysis of the problem. Indeed, in order to recover the strong convergence of the minimizing sequences, we rely on a concentration-compactness argument showing that both vanishing and dichotomy can not occur. To this end, first we need to prove the following technical Lemma.
	\begin{Lem}
		\label{dichotomy:lemma}
		Let $\{u_n\}_n \subset \cS$ such that $\lim_n\|u_n\|^2_{A,s}=L$ for some $L>0$ and
		\[
		\mu_n(x):=m^{2s}|u_n(x)|^2 + \frac{C_s}{2}m^{\frac{3+2s}{2}}\int_{\R^3} \frac{|\e^{-i(x-y)\cdot A\left(\frac{x+y}{2}\right)}u_n(x) - u_n(y) |^2}{|x-y|^{\frac{3+2s}{2}}}\mathcal{K}_{{\frac{3+2s}{2}}}(m|x-y|) \dd y,
		\quad
		x \in \R^3, n \in \N.
		\]	
		Assume that there is $\beta \in (0,L)$ such that for every $\eps>0$ there exist $\bar{R}>0$, $\overline{n} \geq 1$, a sequence of radii $\{R_n\}_n$ such that $R_n \to +\infty$, and a sequence $\{\xi_n\}_n \subset \R^3$ such that, defining $\mu^1_n:=1_{B_{\bar{R}}(\xi_n)}\mu_n$ and $\mu^2_n:=1_{B_{R_n}^c(\xi_n)}\mu_n$, for every $n \geq \overline{n}$,
		\begin{gather}
			\left|\int_{\R^3}\mu^1_n(x) \dd x - \beta \right| \leq \eps,\label{dichotomy:hyp:1}\\
			\left|\int_{\R^3}\mu^2_n(x) \,dx - (L-\beta) \right| \leq \eps,\label{dichotomy:hyp:2}\\
			\int_{\R^3}\left|\mu_n(x) - \mu^1_n(x)  - \mu^2_n(x) \right| \dd x\leq \eps. \label{dichotomy:hyp:3}
		\end{gather}
		Then, there exist two sequences $\{u^1_n\}_n,\{u^2_n\}_n \subset H^s_A(\R^3,\C)$ such that
		\begin{equation}
			\label{dichotomy:thesis:0}
			\lim_n \dist(\supp(u^1_n),\supp(u^2_n)) = +\infty
		\end{equation}
		and, for any $n \geq \overline{n}$,
		\begin{gather}
			\left|\|u^1_n\|^2_{A,s} - \beta\right| \leq \eps, \label{dichotomy:thesis:1}\\
			\left|\|u^2_n\|^2_{A,s} - (L-\beta)\right| \leq \eps, \label{dichotomy:thesis:2}\\
			\|u_n - u^1_n - u^2_n\|^2_{A,s} \leq \eps, \label{dichotomy:thesis:3}\\
			\left|1 - \|u^1_n\|^p_{L^p(\R^3,\C)} - \|u^2_n\|^p_{L^p(\R^3,\C)}\right| \leq \eps.\label{dichotomy:thesis:4}
		\end{gather}
	\end{Lem}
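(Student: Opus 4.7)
The plan is to implement the classical Lions dichotomy construction in the nonlocal magnetic setting, with the cut-off Lemma \ref{cut-off} as the principal technical tool. I would first fix two Lipschitz profiles $\varphi^1,\varphi^2:\R^3 \to [0,1]$ with $\varphi^1 \equiv 1$ on $B_1(0)$, $\supp \varphi^1 \subset B_2(0)$, and $\varphi^2 \equiv 0$ on $B_1(0)$, $\varphi^2 \equiv 1$ outside $B_2(0)$. Then, for $n$ large enough that $4\bar R < R_n$, I would set
\[
u_n^1(x) := \varphi^1\!\left(\tfrac{x-\xi_n}{\bar R}\right) u_n(x), \qquad u_n^2(x) := \varphi^2\!\left(\tfrac{2(x-\xi_n)}{R_n}\right) u_n(x).
\]
Their supports lie respectively in $\overline{B_{2\bar R}(\xi_n)}$ and $\R^3 \setminus B_{R_n/2}(\xi_n)$, so $\dist(\supp u_n^1,\supp u_n^2) \ge R_n/2 - 2\bar R \to +\infty$, giving \eqref{dichotomy:thesis:0} directly. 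The auxiliary function $\chi_n := 1 - \varphi_n^1 - \varphi_n^2$ takes values in $[0,1]$, is Lipschitz with constant uniformly bounded in $n$, and is supported in the annulus $A_n := B_{R_n}(\xi_n)\setminus B_{\bar R}(\xi_n)$.

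The key smallness input comes from the middle annulus. Since $\mu_n^1 + \mu_n^2$ vanishes on $A_n$, hypothesis \eqref{dichotomy:hyp:3} gives $\int_{A_n}\mu_n \le \eps$. From the pointwise lower bound $\mu_n(x) \ge m^{2s}|u_n(x)|^2$ one obtains $\|u_n\|_{L^2(A_n)}^2 \le \eps/m^{2s}$, while the remaining part of $\mu_n$ yields
\[
\int_{A_n}\int_{\R^3}\frac{\left|\e^{-i(x-y)\cdot A\left(\frac{x+y}{2}\right)}u_n(x)-u_n(y)\right|^2}{|x-y|^{\frac{3+2s}{2}}}\bessel{\frac{3+2s}{2}}{m|x-y|}\,dy\,dx \le \frac{2\eps}{C_s m^{\frac{3+2s}{2}}}.
\]
These two smallness bounds will be used repeatedly.

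For \eqref{dichotomy:thesis:3}, since $u_n - u_n^1 - u_n^2 = \chi_n u_n$ and $\chi_n u_n$ vanishes outside $A_n$, the full double integral $[\chi_n u_n]_{A,s}^2$ reduces by inclusion--exclusion to integrals with at least one variable in $A_n$; applying Lemma \ref{cut-off} with $\varphi = \chi_n$, $E_1 = A_n$, $E_2 = \R^3$ bounds it by a constant times $\|u_n\|_{L^2(A_n)}^2$ plus the restricted Gagliardo integral, both $O(\eps)$. Combined with $\|\chi_n u_n\|_{L^2}^2 \le \|u_n\|_{L^2(A_n)}^2$, this gives \eqref{dichotomy:thesis:3} after shrinking the initial $\eps$ to absorb constants. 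For \eqref{dichotomy:thesis:1}--\eqref{dichotomy:thesis:2} the strategy is to compare $\|u_n^i\|_{A,s}^2$ directly with $\int_{\R^3}\mu_n^i$: the $L^2$ parts differ only on the transition annulus and are controlled by the previous paragraph, while the Gagliardo parts split into contributions from inner$\times$inner (where $\varphi_n^i \equiv 1$ and the two integrands coincide), outer$\times$outer (where both integrands vanish) and mixed terms involving the transition zone (handled by Lemma \ref{cut-off} together with the smallness bounds). Finally, for \eqref{dichotomy:thesis:4} I observe that $\varphi_n^1 + \varphi_n^2 \equiv 1$ outside $A_n$, so $|u_n|^p - |u_n^1|^p - |u_n^2|^p$ is supported in $A_n$; choosing $\theta \in (0,1)$ with $p = 2\theta + 2^*_s(1-\theta)$ and using H\"older's inequality together with the $L^{2^*_s}$ boundedness of $\{u_n\}$ (granted by Lemma \ref{magn:sob:emb} and the uniform bound $\|u_n\|_{A,s}^2 \le L + 1$) yields $\int_{A_n}|u_n|^p \le C\eps^{\theta}$, which, since $0 \le \varphi_n^i \le 1$, gives \eqref{dichotomy:thesis:4} after shrinking $\eps$.

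The main obstacle I expect is the bookkeeping for \eqref{dichotomy:thesis:1}--\eqref{dichotomy:thesis:2}: the quantity $\int_{\R^3}\mu_n^1$ counts, for each $x \in B_{\bar R}(\xi_n)$, the interaction with \emph{every} $y \in \R^3$, whereas $\|u_n^1\|_{A,s}^2$ symmetrically integrates $|\e^{-i(x-y)\cdot A((x+y)/2)}\varphi_n^1(x)u_n(x)-\varphi_n^1(y)u_n(y)|^2$ over $\R^3 \times \R^3$, and the two quantities differ structurally by terms supported in regions touching the transition annulus. Reconciling them via a careful region-by-region decomposition, and ultimately absorbing all errors into $O(\eps)$ by means of the middle-annulus smallness established above, is where the technical effort of the proof concentrates; once this is carried out, the four estimates follow by taking the initial $\eps$ small to compensate for multiplicative constants arising from Lemma \ref{cut-off} and the interpolation exponent $\theta$.
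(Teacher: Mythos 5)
Your construction and the overall route are essentially the paper's: you use the same cut-offs (up to the cosmetic choice $\varphi^2=1-\varphi$), define $u_n^1,u_n^2$ exactly as in the paper's proof, extract the same key smallness over the middle annulus $B_{R_n}(\xi_n)\setminus B_{\bar R}(\xi_n)$ from \eqref{dichotomy:hyp:3}, and your treatment of \eqref{dichotomy:thesis:0}, \eqref{dichotomy:thesis:3} and \eqref{dichotomy:thesis:4} matches the paper's. However, there is a concrete gap in your plan for \eqref{dichotomy:thesis:1}--\eqref{dichotomy:thesis:2}. You claim that $\int_{\R^3}\mu_n^1$ and $\|u_n^1\|_{A,s}^2$ ``differ structurally by terms supported in regions touching the transition annulus'' and that all errors can be ``absorbed into $O(\eps)$ by means of the middle-annulus smallness.'' This overlooks the interaction of the inner ball with the \emph{far exterior}: on $B_{\bar R}(\xi_n)\times B_{R_n}^c(\xi_n)$, the Gagliardo part of $\int_{\R^3}\mu_n^1$ has integrand $|\e^{-i(x-y)\cdot A(\frac{x+y}{2})}u_n(x)-u_n(y)|^2$, while that of $\|u_n^1\|_{A,s}^2$ (since $u_n^1(y)=0$ there) has integrand $|u_n(x)|^2$; these do not coincide, neither vanishes, and the region does not touch $B_{2\bar R}(\xi_n)\setminus B_{\bar R}(\xi_n)$. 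Moreover neither variable lies in the middle annulus, so the smallness bounds $\int_{B_{R_n}(\xi_n)\setminus B_{\bar R}(\xi_n)}|u_n|^2\le\eps/m^{2s}$ and the corresponding Gagliardo bound give you nothing here, and Lemma \ref{cut-off} only converts the question back to quantities of exactly this form.

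What is missing is a separate far-distance argument: since $|x-y|\ge R_n-\bar R\to+\infty$ on $B_{\bar R}(\xi_n)\times B_{R_n}^c(\xi_n)$, the Bessel estimate \eqref{Bessel:bound} turns the kernel into $C|x-y|^{-(3+2s)}$, and one extracts a factor $(R_n-\bar R)^{-\delta}$, $\delta\in(0,2s)$, leaving integrals controlled by $\|u_n\|_{L^2(\R^3,\C)}^2$, which is uniformly bounded. This is precisely how the paper closes both the relevant term of $\|u_n^1\|_{A,s}^2$ and the corresponding term of $\int_{\R^3}\mu_n^1$. Once you add this far-distance/Bessel-decay estimate to your region-by-region decomposition, the rest of your outline carries through.
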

	\begin{proof}
		Let $\eps>0$. Consider $\varphi \in C^{\infty}(\R^3,\R)$ a radially symmetric function, $0 \leq \vp \leq 1$,
		\[
		\varphi(x)=
		\begin{cases}
			1, &\text{ if } x \in B_1(0),\\
			0, &\text{ if } x \in B^c_2(0),
		\end{cases}
		\]
		$\vp_r:=\vp(\cdot/r)$, for $r>0$, and 
		\[
		u^1_n:=\varphi_{\bar{R}}(\cdot-\xi_n)u_n \in H^s_A(\R^3,\C), \qquad u^2_n:=(1-\varphi_{R_n/2}(\cdot-\xi_n))u_n \in H^s_A(\R^3,\C).
		\]
		Since $R_n \to +\infty$, the Lipschitz constants of $\vp_{\bar{R}}(\cdot - \xi_n)$ and $1-\varphi_{R_n/2}(\cdot-\xi_n)$ are uniformly bounded with respect to $n$ and $\dist(\supp(u^1_n),\supp(u^2_n)) \to +\infty$ as $n \to +\infty$.\\ 
		Moreover, observe that, for $n$ large enough,
		\begin{equation}
			\label{u1mu1}
			\|u^1_n\|^2_{A,s} = \cI_n + \cJ_n
			\quad
			\text{and}
			\quad
			\int_{\R^3} \mu_n^1(x) dx = \cI_n + \tilde{\cJ}_n,
		\end{equation}
		where
		\begin{equation*}
			\begin{split}
				\cI_n
				&:= \frac{C_s}{2}m^{\frac{3+2s}{2}}\int_{B_{\bar{R}}(\xi_n) \times B_{\bar{R}}(\xi_n)}\frac{|\e^{-i(x-y)\cdot A\left(\frac{x+y}{2}\right)}u_n(x) - u_n(y) |^2}{|x-y|^{\frac{3+2s}{2}}}\mathcal{K}_{{\frac{3+2s}{2}}}(m|x-y|) \dd x \dd y \\
				&\quad + m^{2s}\int_{B_{\bar{R}}(\xi_n)}|u_n(x)|^2 \dd x,
			\end{split}
		\end{equation*}
		\begin{equation*}
			\begin{split}
				\cJ_n
				&:= \frac{C_s}{2}m^{\frac{3+2s}{2}}\int_{\left(B_{2\bar{R}}(\xi_n) \setminus B_{\bar{R}}(\xi_n)\right)^2}\frac{|\e^{-i(x-y)\cdot A\left(\frac{x+y}{2}\right)}u^1_n(x) - u^1_n(y) |^2}{|x-y|^{\frac{3+2s}{2}}}\mathcal{K}_{{\frac{3+2s}{2}}}(m|x-y|) \dd x \dd y \\
				&\quad +  C_sm^{\frac{3+2s}{2}}\int_{B_{2\bar{R}}(\xi_n) \setminus B_{\bar{R}}(\xi_n) \times B_{\bar{R}}(\xi_n)}\frac{|\e^{-i(x-y)\cdot A\left(\frac{x+y}{2}\right)}u^1_n(x) - u^1_n(y) |^2}{|x-y|^{\frac{3+2s}{2}}}\mathcal{K}_{{\frac{3+2s}{2}}}(m|x-y|) \dd x \dd y \\
				&\quad + C_sm^{\frac{3+2s}{2}}\int_{B_{2\bar{R}}(\xi_n) \setminus B_{\bar{R}}(\xi_n) \times B^c_{2\bar{R}}(\xi_n)}\frac{|\e^{-i(x-y)\cdot A\left(\frac{x+y}{2}\right)}u^1_n(x) - u^1_n(y) |^2}{|x-y|^{\frac{3+2s}{2}}}\mathcal{K}_{{\frac{3+2s}{2}}}(m|x-y|) \dd x \dd y \\
				& \quad +C_sm^{\frac{3+2s}{2}}\int_{B_{\bar{R}}(\xi_n) \times B_{R_n}(\xi_n) \setminus B_{2\bar{R}}(\xi_n)}\frac{|\e^{-i(x-y)\cdot A\left(\frac{x+y}{2}\right)}u^1_n(x) - u^1_n(y) |^2}{|x-y|^{\frac{3+2s}{2}}}\mathcal{K}_{{\frac{3+2s}{2}}}(m|x-y|) \dd x \dd y \\
				&\quad + C_sm^{\frac{3+2s}{2}}\int_{B_{\bar{R}}(\xi_n) \times B^c_{R_n}(\xi_n)}\frac{|u_n(x)|^2}{|x-y|^{\frac{3+2s}{2}}}\mathcal{K}_{{\frac{3+2s}{2}}}(m|x-y|) \dd x \dd y
				+ m^{2s}\int_{B_{2\bar{R}}(\xi_n)\setminus B_{\bar{R}}(\xi_n)}|u_n^1(x)|^2 \dd x,
			\end{split}
		\end{equation*}
		and
		\begin{equation*}
			\begin{split}
				\tilde{\cJ}_n
				&:=
				\frac{C_s}{2}m^{\frac{3+2s}{2}}\int_{B_{\bar{R}}(\xi_n) \times B_{R_n}(\xi_n) \setminus B_{\bar{R}}(\xi_n)}\frac{|\e^{-i(x-y)\cdot A\left(\frac{x+y}{2}\right)}u_n(x) - u_n(y) |^2}{|x-y|^{\frac{3+2s}{2}}}\mathcal{K}_{{\frac{3+2s}{2}}}(m|x-y|) \dd x \dd y\\
				&\quad
				+\frac{C_s}{2}m^{\frac{3+2s}{2}}\int_{B_{\bar{R}}(\xi_n) \times B^c_{R_n}(\xi_n)}\frac{|\e^{-i(x-y)\cdot A\left(\frac{x+y}{2}\right)}u_n(x) - u_n(y) |^2}{|x-y|^{\frac{3+2s}{2}}}\mathcal{K}_{{\frac{3+2s}{2}}}(m|x-y|) \dd x \dd y.       
			\end{split}
		\end{equation*}
		We claim that 
		\begin{equation}
			\label{claim:Jn}
			\cJ_n, \tilde{\cJ}_n \leq C\eps,
		\end{equation} 
		where $C$ depends only on $s$, $m$ and the Lipschitz constant of $\vp_{\bar{R}}$.\\
		For the first four integrals of $\cJ_n$ and the first one of $\tilde{\cJ_n}$, we can proceed in the same way. Here we give the details only for the first one.
		Observing that $B_{2\bar{R}}(\xi_n) \setminus B_{\bar{R}}(\xi_n)\subset B_{R_n}(\xi_n) \setminus B_{\bar{R}}(\xi_n)$\footnote{$B_{R_n}(\xi_n) \setminus B_{2\bar{R}}(\xi_n)\subset B_{R_n}(\xi_n) \setminus B_{\bar{R}}(\xi_n)$ for the fourth integral of $\cJ_n$.} and applying Lemma \ref{cut-off} and \eqref{dichotomy:hyp:3},
		\begin{equation}
			\label{computations:details:1}
			\begin{split}
				&\int_{\left(B_{2\bar{R}}(\xi_n) \setminus B_{\bar{R}}(\xi_n)\right)^2}\frac{|\e^{-i(x-y)\cdot A\left(\frac{x+y}{2}\right)}u^1_n(x) - u^1_n(y) |^2}{|x-y|^{\frac{3+2s}{2}}}\mathcal{K}_{{\frac{3+2s}{2}}}(m|x-y|) \dd x \dd y\\
				&\qquad \leq \int_{B_{R_n}(\xi_n) \setminus B_{\bar{R}}(\xi_n) \times B_{2\bar{R}}(\xi_n) \setminus B_{\bar{R}}(\xi_n)}\frac{|\e^{-i(x-y)\cdot A\left(\frac{x+y}{2}\right)}u^1_n(x) - u^1_n(y) |^2}{|x-y|^{\frac{3+2s}{2}}}\mathcal{K}_{{\frac{3+2s}{2}}}(m|x-y|) \dd x \dd y \\
				& \qquad \leq C\left(\int_{B_{R_n}(\xi_n) \setminus B_{\bar{R}}(\xi_n)} |u_n(x)|^2 \dd x\right. \\
				& \qquad \qquad \left.+ \int_{B_{R_n}(\xi_n) \setminus B_{\bar{R}}(\xi_n) \times \R^3}\frac{|\e^{-i(x-y)\cdot A\left(\frac{x+y}{2}\right)}u_n(x) - u_n(y) |^2}{|x-y|^{\frac{3+2s}{2}}}\mathcal{K}_{{\frac{3+2s}{2}}}(m|x-y|) \dd x \dd y\right) \\
				& \qquad \leq C\int_{\R^3} \left( \mu_n(x) - \mu^1_n(x) - \mu^2_n(x)\right) \dd x \leq C\eps.
			\end{split}
		\end{equation}
		For the last integral of $\cJ_n$, since $\vp_{\bar{R}} \leq 1$, as above, by \eqref{dichotomy:hyp:3},
		\begin{equation}
			\label{computations:details:2}
			\int_{B_{2\bar{R}}(\xi_n)\setminus B_{\bar{R}}(\xi_n)}|u_n^1(x)|^2 \dd x \leq C\int_{\R^3} \left( \mu_n(x) - \mu^1_n(x) - \mu^2_n(x)\right) \dd x \leq C\eps.
		\end{equation}
		For the remaining integrals (the fifth integral of $\cJ_n$ and the second one of $\tilde{\cJ}_n$) we observe that, by \eqref{Bessel:bound}, since $|x-y| \geq R_n - \bar{R} \to +\infty$ as $n$ diverges, and the boundedness of $\{u_n\}_n$ in $L^2(\R^3,\C)$, 
		\begin{align*}
			&\int_{B_{\bar{R}}(\xi_n) \times B^c_{R_n}(\xi_n)}\frac{|u_n(x) |^2}{|x-y|^{\frac{3+2s}{2}}}\mathcal{K}_{{\frac{3+2s}{2}}}(m|x-y|) \dd x \dd y  \leq C\int_{B_{\bar{R}}(\xi_n)}|u_n(x)|^2 \left(\int_{B^c_{R_n}(\xi_n)}\frac{1}{|x-y|^{3+2s}} \dd y\right) \dd x \\
			& \qquad \leq \frac{1}{(R_n - \bar{R})^{\delta}}\int_{B_{\bar{R}}(\xi_n)}|u_n(x)|^2 \left(\int_{\{|x-y| \geq 1\}}\frac{1}{|x-y|^{3+2s-\delta}} \dd y\right) \dd x \leq \frac{C}{(R_n - \bar{R})^{\delta}} \leq C\eps,
		\end{align*}
		and
		\begin{align*}
			&\int_{B_{\bar{R}}(\xi_n) \times B^c_{R_n}(\xi_n)}\frac{|u_n(y) |^2}{|x-y|^{\frac{3+2s}{2}}}\mathcal{K}_{{\frac{3+2s}{2}}}(m|x-y|) \dd x \dd y  
			\leq
			C\int_{B^c_{R_n}(\xi_n)}|u_n(y)|^2 \left(\int_{B_{\bar{R}}(\xi_n)}\frac{1}{|x-y|^{3+2s}} \dd x\right) \dd y \\
			& \qquad \leq
			\frac{1}{(R_n - \bar{R})^{\delta}}
			\int_{B^c_{R_n}(\xi_n)}|u_n(y)|^2 \left(\int_{\{|x-y| \geq 1\}}\frac{1}{|x-y|^{3+2s-\delta}} \dd x\right) \dd y 
			\leq \frac{C}{(R_n - \bar{R})^{\delta}} \leq C\eps,
		\end{align*}
		where $\delta \in (0,2s)$. Thus, combining \eqref{dichotomy:hyp:1}, \eqref{u1mu1}, and \eqref{claim:Jn}, we get \eqref{dichotomy:thesis:1}.\\
		Arguing analogously for $u_n^2$ and $\mu_n^2$ and using the uniform boundedness of the Lipschitz constant of $\varphi_{R_n/2}$, from \eqref{dichotomy:hyp:2} we get \eqref{dichotomy:thesis:2}.
		\\
		Now, since $u_n - u^1_n - u^2_n=\tilde{\varphi} u_n$ where $\tilde{\varphi}: = \varphi_{R_n/2}(\cdot-\xi_n) - \varphi_{\bar{R}}(\cdot-\xi_n)\in C^{0,1}(\R^3,\R)$ and, for $n$ large enough, $0\leq \tilde{\varphi}\leq 1$, and $\tilde{\varphi}=0$ in $B_{\bar{R}}(\xi_n)\cup B_{R_n}^c(\xi_n)$, arguing as in \eqref{computations:details:1} and \eqref{computations:details:2} we obtain 
		\eqref{dichotomy:thesis:3}.\\
		To conclude, by the interpolation inequality, Lemma \ref{magn:sob:emb}, the boundedness of the norm, and arguing as in \eqref{computations:details:2}, 
		\begin{multline*}
			0
			\leq 1 - \|u^1_n\|^p_{L^p(\R^3,\C)} - \|u^2_n\|^p_{L^p(\R^3,\C)}
			\leq \int_{B_{R_n}(\xi_n) \setminus B_{\bar{R}}(\xi_n)} |u_n(x)|^p \dd x\\
			\leq \left(\int_{B_{R_n}(\xi_n) \setminus B_{\bar{R}}(\xi_n)} |u_n(x)|^2 \dd x\right)^{\frac{\theta}{2}}\|u_n\|^{1-\theta}_{L^{2^*_s}(\R^3,\C)}
			\leq C\eps
		\end{multline*}
		where $\theta \in (0,1)$, and so \eqref{dichotomy:thesis:4} follows.
	\end{proof}
	
	We are now ready to show the existence of a solution to \eqref{eq:semilinear} in the general setting (without any symmetry assumptions).
	\begin{Th}
		\label{semilinear:existence:th}
		Suppose that $A:\R^3 \to \R^3$ satisfies \eqref{hyp:A:linear} or is a vector field with locally bounded gradient such that {\rm (\hyperlink{Ab}{$\cA_b$})} holds and $\cM_A < \inf \cM_{A_{\Xi}}$.
		Then, for any $p \in (2,2^*_s)$, there exists a solution $u \in H^s_A(\R^3,\C)$ to \eqref{eq:semilinear}.
	\end{Th}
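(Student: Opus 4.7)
The plan is to find a minimizer for $\cM_A$ on $\cS$ via concentration-compactness, and then recover a weak solution to \eqref{eq:semilinear} through the Lagrange multiplier theorem combined with the rescaling \eqref{rescaled:min:prob}. Given a minimizing sequence $\{u_n\}_n \subset \cS$ with $\|u_n\|_{A,s}^2 \to \cM_A$, I apply Lions' concentration-compactness principle to the densities $\mu_n$ introduced in Lemma \ref{dichotomy:lemma}. Vanishing is excluded by Lemma \ref{vanishing:lemma} applied to $\{|u_n|\}_n$ (which is bounded in $H^s(\R^3,\R)$ by the diamagnetic inequality of Lemma \ref{diamagnetic:eq}), since otherwise $|u_n|\to 0$ in $L^p$ would contradict $\|u_n\|_{L^p(\R^3,\C)}=1$. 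Dichotomy is excluded by combining Lemma \ref{dichotomy:lemma} with the strict subadditivity inherited from \eqref{rescaled:min:prob}: in the $\eps\searrow 0$ limit one would produce $\lambda_1,\lambda_2 >0$ with $\lambda_1+\lambda_2 = 1$ and $\cM_A \geq (\lambda_1^{2/p}+\lambda_2^{2/p})\cM_A$, contradicting the elementary inequality $\lambda_1^{2/p}+\lambda_2^{2/p}>1$ valid for $p>2$.

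The compact alternative thus yields $\{\xi_n\}_n \subset \R^3$ such that $\mu_n(\cdot+\xi_n)$ is tight. If $\{\xi_n\}_n$ is bounded, $u_n$ itself is tight; up to a subsequence $u_n \weakto u$ in $H^s_A(\R^3,\C)$, tightness plus the local compact embedding of Lemma \ref{magn:sob:emb} upgrades this to $u_n\to u$ in $L^p(\R^3,\C)$ (so $\|u\|_{L^p}=1$), and weak lower semicontinuity of $\|\cdot\|_{A,s}$ gives $\|u\|_{A,s}^2 \leq \cM_A$, so $u$ attains $\cM_A$. If $\{\xi_n\}_n$ is unbounded, two cases arise. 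Under \eqref{hyp:A:linear}, Lemma \ref{partial:gauge} with $\eta = -A\xi_n$ shows that $v_n(x) := \e^{-i(A\xi_n)\cdot x}u_n(x+\xi_n)$ still lies in $\cS$ with $\|v_n\|_{A,s}=\|u_n\|_{A,s}$ and is tight, reducing us to the bounded case. Under $(\cA_b)$, the analogous gauge translate $v_n(x) := \e^{iH_n\cdot x}u_n(x+\xi_n)$ belongs to $H^s_{A_n}(\R^3,\C)$ with $\|v_n\|_{A_n,s}=\|u_n\|_{A,s}$ and $\|v_n\|_{L^p}=1$; by the uniform local bound on $\{A_n\}_n$ in Definition \ref{assumption:A} and Lemma \ref{precompact:Lq}, up to subsequence $v_n \to v$ in $L^q_{\loc}(\R^3,\C)$ for $q\in[1,2^*_s)$, and tightness promotes this to $v_n\to v$ in $L^p(\R^3,\C)$, whence $\|v\|_{L^p}=1$. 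A Fatou-type inequality exploiting the pointwise convergences $A_n \to A_{\Xi}$ and $v_n\to v$ a.e.\ then yields $v \in H^s_{A_{\Xi}}(\R^3,\C)$ with $\|v\|_{A_{\Xi},s}^2 \leq \liminf_n \|v_n\|_{A_n,s}^2 = \cM_A$, so $\inf \cM_{A_{\Xi}} \leq \cM_A$, contradicting the hypothesis $\cM_A < \inf \cM_{A_{\Xi}}$; hence $\{\xi_n\}_n$ must be bounded in this case too.

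Finally, given a minimizer $u\in \cS$, the Lagrange multiplier theorem for the smooth constrained problem produces $\lambda \in \R$ such that $u$ weakly satisfies $(-\Delta + m^2)_A^s u = \lambda|u|^{p-2}u$; testing with $u$ gives $\lambda = \cM_A > 0$, and the rescaling $\tilde u := \cM_A^{1/(p-2)}u$ is the desired weak solution of \eqref{eq:semilinear}. The main technical obstacle lies in the Fatou-type lower semicontinuity along the gauged translates under $(\cA_b)$: one must pass to the limit inside the Bessel-weighted double integral defining $[v_n]_{A_n,s}^2$, handling simultaneously the pointwise convergence of the phase factors $\e^{-i(x-y)\cdot A_n((x+y)/2)} \to \e^{-i(x-y)\cdot A_{\Xi}((x+y)/2)}$ and the a.e.\ limit $v_n \to v$, and exploiting the uniform local $L^\infty$ bound on $\{A_n\}_n$ to produce integrable majorants near the diagonal before invoking Fatou's lemma. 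A lesser difficulty is the careful verification of the hypotheses \eqref{dichotomy:hyp:1}-\eqref{dichotomy:hyp:3} of Lemma \ref{dichotomy:lemma} from a generic concentration-compactness decomposition of the finite positive measures $\mu_n$.
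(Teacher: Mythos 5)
Your proposal follows essentially the same route as the paper: concentration–compactness applied to the densities $\mu_n$, vanishing ruled out via the diamagnetic inequality together with Lemma \ref{vanishing:lemma}, dichotomy ruled out via Lemma \ref{dichotomy:lemma} and the strict subadditivity of $\lambda\mapsto\lambda^{2/p}$, and in the compact alternative a gauge translation (Lemma \ref{partial:gauge}) combined, in the {\rm(\hyperlink{Ab}{$\cA_b$})} case, with the hypothesis $\cM_A < \inf\cM_{A_\Xi}$ to force $\{\xi_n\}_n$ to be bounded. One correction worth flagging: you identify the Fatou step in the {\rm(\hyperlink{Ab}{$\cA_b$})} case as the main obstacle and claim one must ``produce integrable majorants near the diagonal before invoking Fatou's lemma''; in fact the integrand in $[v_n]^2_{A_n,s}$ is nonnegative and converges a.e.\ on $\R^3\times\R^3$ (using $A_n\to A_\Xi$ pointwise and $v_n\to v$ a.e.), so Fatou's lemma for nonnegative functions applies directly with no majorant whatsoever -- you are conflating Fatou with dominated convergence, and the paper's one-line invocation of Fatou's Lemma is all that is required.
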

	\begin{proof}
		Consider a sequence $\{u_n\}_n \subset \cS$ such that  $\|u_n\|^2_{A,s} \to \cM_A$ as $n \to +\infty$ and $\{\mu_n\}_n$ as in Lemma \ref{dichotomy:lemma}. Our goal is to get compactness of $\{u_n\}_n$ by applying the Concentration-Compactness Lemma \cite[Lemma I.1]{Lions1984} and ruling out both dichotomy and vanishing.\\
		\textit{Vanishing does not hold.} Assume by contradiction that for all $R>0$
		\[
		\lim_n\sup_{\xi \in \R^3}\int_{B_R(\xi)} \mu_n(x) \dd x = 0.
		\]
		Then, by \eqref{pointwise:diamagnetic} and Lemma \ref{local:Hs:equivalence},
		\[
		\lim_n\sup_{\xi \in \R^3}\||u_n|\|^2_{H^s(B_R(\xi),\R)} = 0
		\]
		and so, by Lemma \ref{Lemma36}, for any $R>0$
		\[
		\lim_n\sup_{\xi \in \R^3}\int_{B_R(\xi)}|u_n(x)|^p \dd x = 0.
		\]
		Hence, by Lemma \ref{vanishing:lemma}, $u_n \to 0$ in $L^p(\R^3,\R)$, which contradicts $\{u_n\}_n \subset \cS$.\\	
		\textit{Dichotomy does not hold.} Assume by contradiction that there exists $\beta \in (0,\cM_A)$ such that, for every $\eps>0$, there are $\bar{R}>0$, $\bar{n} \geq 1$, a sequence of radii $R_n \to +\infty$, and a sequence $\{\xi_n\}_n \subset \R^3$ such that, if $\mu^1_n(x):=1_{B_{\bar{R}}(\xi_n)}\mu_n$ and $\mu^2_n(x):=1_{B^c_{R_n}(\xi_n)}\mu_n$, for every $n \geq \bar{n}$ \eqref{dichotomy:hyp:1}--\eqref{dichotomy:hyp:3} hold for $L=\cM_A$.\\
		Hence, by Lemma \ref{dichotomy:lemma} there exist two sequences $\{u^1_n\}_n, \{u^2_n\}_n \subset H^s_A(\R^3,\C)$ such that, for $n \geq \bar{n}$, \eqref{dichotomy:thesis:0}--\eqref{dichotomy:thesis:4} hold for $L=\cM_A$.\\
		Then there exist $\theta_{\eps}, \omega_{\eps} \in (0,1)$ such that, up to a subsequence, as $n \to +\infty$,
		\begin{gather}
			\theta_{n,\eps}:=\|u^1_n\|^p_{L^p(\R^3,\C)} \to \theta_{\eps},\label{theta:eps}\\
			\omega_{n,\eps}:=\|u^2_n\|^p_{L^p(\R^3,\C)} \to \omega_{\eps},\label{omega:eps}\\
			\left|1-\theta_{\eps} - \omega_{\eps}\right| \leq \eps.\label{1-theta-omega}
		\end{gather}
		Suppose by contradiction that $\lim_{\eps \to 0^+}\theta_{\eps} = 1$, then by \eqref{dichotomy:thesis:1}, \eqref{rescaled:min:prob}, and \eqref{theta:eps}, for $\eps>0$ small enough 
		\[
		\beta + \eps
		\geq \limsup_n\|u^1_n\|^2_{A,s}
		\geq \limsup_n\cM_{\theta_{n,\eps},A} 
		= \theta_{\eps}^{2/p}\cM_A
		> \beta + \eps.
		\]
		Now, suppose that $\lim_{\eps \to 0^+}\theta_{\eps} = 0$. Then $\omega_{\eps} \to 1$ and we can reach a contradiction reasoning as above with $u^2_n$ using \eqref{dichotomy:thesis:2}, \eqref{rescaled:min:prob}, and \eqref{omega:eps}.\\
		Therefore, by \eqref{dichotomy:thesis:1}, \eqref{dichotomy:thesis:2}, \eqref{theta:eps}--\eqref{1-theta-omega}, for $\eps>0$ small enough and since $\lambda^{2/p} + (1-\lambda)^{2/p} > 1$ for every $\lambda \in (0,1)$,
		\[
		\cM_A + 2\eps \geq \limsup_n\left(\|u^1_n\|^2_{A,s} + \|u^2_n\|^2_{A,s}\right) \geq \limsup_n\left(\cM_{\theta_{n,\eps},A} + \cM_{\omega_{n,\eps},A}\right) = \cM_A\left(\theta_{\eps}^{2/p} + \omega_{\eps}^{2/p}\right) > \cM_A + 2\eps,
		\]
		getting a contradiction.\\
		Hence, \textit{compactness} holds, namely there exists a sequence $\{\xi_n\}_n$ such that for every $\eps>0$ there exists $R>0$ such that, for every $n \in \N$, 
		\begin{equation}
			\label{Lions:theses}
			\int_{B^c_R(\xi_n)} \mu_n(x) \dd x  \leq\eps.
		\end{equation}
		Let $A$ satisfies \eqref{hyp:A:linear} and consider $v_n(x):=\e^{-iA(\xi_n)\cdot x}u_n(x+\xi_n)$. By \eqref{Lions:theses},
		
		\begin{equation}
			\label{boundedness:outside:ball}
			\text{ for all } \eps > 0 \text{ there exists } R>0 \text{ such that } \sup_{n \in \N}\int_{B^c_R(0)}|v_n(x)|^2 \dd x \leq \eps,
		\end{equation}
		using also Lemma \ref{partial:gauge}, we have that, for every $n \in \N$, $\|v_n\|^2_{A,s} = \|u_n\|^2_{A,s}$ so that $\{v_n\}_n$ is bounded in $H^s_A(\R^3,\C)$, and $\|v_n\|_{L^p(\R^3,\C)} = \|u_n\|_{L^p(\R^3,\C)} = 1$. Then, there exists a function $v \in H^s_A(\R^3,\C)$ such that, up to a subsequence, $v_n \weakto v$ in $H^s_A(\R^3,\C)$, by Lemma \ref{magn:sob:emb} and \eqref{boundedness:outside:ball}, $v_n \to v \text{ in } L^2(\R^3,\C)$, and, by interpolation, also in $L^q(\R^3,\C)$, for every $q \in (2,2^*_s)$, and $\|v\|_{L^p(\R^3,\C)}=1$.\\
		It follows that $v \in \cS$ and 
		\[
		\cM_A \leq \|v\|^2_{A,s} \leq \liminf_n\|v_n\|^2_{A,s} = \liminf_n\|u_n\|^2_{A,s} = \cM_A,
		\]
		providing a solution of \eqref{eq:semilinear}.\\
		If, instead, $A$ satisfies (\hyperlink{Ab}{$\cA_b$}), then $\{\xi_n\}_n$ is bounded. Indeed, if we suppose by contradiction that $\{\xi_n\}_n$ is unbounded and we consider $v_n(x):=\e^{iH_n\cdot x} u_n(x + \xi_n)$, $x \in \R^3$, by Lemma \ref{partial:gauge} we get that, for every $n \in \N$, $v_n \in H^s_{A_n}(\R^3,\C)$ and $[u_n]_{A,s}=[v_n]_{A_n,s}$. By Lemma \ref{precompact:Lq} we can yield that $v_n \to v$ in $L^q_{\loc}(\R^3,\C)$ for every $q \in [1,2^*_s)$.
		Observe that, since $v_n \in H^s_{A_n}(\R^3,\C)$ for every $n$, then by Definition \ref{assumption:A}, $v \in H^s_{A_\Xi}(\R^3,\C)$. Therefore, by Lemma \ref{magn:sob:emb}, $v \in L^q(\R^3,\C)$ for every $q \in [2,2^*_s]$.
		Moreover, by \eqref{Lions:theses}, we get \eqref{boundedness:outside:ball} for this new $\{v_n\}_n$, so that, as before, $v_n \to v$ in $L^q(\R^3,\C)$, for every $q \in [2,2^*_s)$, and $\|v\|_{L^p}=1$.
		By using (\hyperlink{Ab}{$\cA_b$}), Fatou's Lemma, Lemma \ref{partial:gauge}, and the further assumption of this case, we obtain
		\[
		\cM_{A_{\Xi}} 
		\leq 
		\|v\|^2_{A_{\Xi,s}}
		\leq \liminf_n \|v_n\|_{A_n,s}^2
		= \lim_n \|u_n\|_{A,s}^2 = \cM_A < \inf \cM_{A_{\Xi}},
		\]
		reaching a contradiction.\\
		Finally, using the boundedness of $\{\xi_n\}_n$ and \eqref{Lions:theses}, we can repeat for $\{u_n\}_n$ the same arguments used before for $\{v_n\}_n$, obtaining that the weak limit $u$ of $\{u_n\}_n$ in $H^s_A(\R^3,\C)$ belongs to $\cS$ and $\cM_A = \|u\|_{A,s}^2$.
	\end{proof}
	
	\subsection{The Choquard equation}
	\label{subsec:Choquard}
	
	We move our attention on the Choquard equation \eqref{eq:Choquard} for $p \in (1+\alpha/3,(3+\alpha)/(3-2s))$. We begin by noting that it admits a variational structure: weak solutions can be sought as critical points of the functional $\cE_A:H^s_A(\R^3,\C) \to \R$ defined as
	\begin{equation*}
		\begin{aligned}
			\cE(u) := \|u\|_{A,s}^2 - \frac{1}{p}\cD(u), \text{ with } \cD(u):=\int_{\R^3}\left(I_{\alpha} * |u|^p\right)|u|^p \dd x.
		\end{aligned}
	\end{equation*}
	
	Observe that, since in our range of $p$'s, $6p/(3+\alpha)\in (2,2_s^*)$, using Lemma \ref{magn:sob:emb}, by Hardy-Littlewood-Sobolev inequality, there exists $C>0$ such that for all $u\in L^\frac{6p}{\alpha+3}(\R^3,\C)$,
	\begin{equation}
		\label{HLS1}
		\cD(u) \leq C \| u\|_{L^{\frac{6p}{3+\alpha}}(\R^3,\C)}^{2p}
	\end{equation}
	and so, since in our range of $p$'s, $6p/(3+\alpha)\in (2,2_s^*)$, using Lemma \ref{magn:sob:emb}, there is $C>0$ such that for all $u\in H_A^s(\R^3,\C)$,
	\begin{equation}
		\label{HLS2}
		\cD(u) \leq C \| u\|_{A,s}^{2p}.
	\end{equation}
	Moreover, we recall the following Brezis-Lieb type Lemma.
	\begin{Lem}[{\cite[Lemma 2.4]{MoVS}}]
		\label{Brezis:Lieb}
		Let $\alpha \in (0,3)$, $p \in \left[1,6/(3+\alpha)\right)$ and $\{u_n\}_n$ be a bounded sequence in $L^{\frac{6p}{3+\alpha}}(\R^3,\C)$. If $u_n \to u$ almost everywhere in $\R^3$ as $n \to +\infty$, then
		\[
		\lim_n\left(\cD(u_n) - \cD(u_n-u)\right) = \cD(u).
		\]
	\end{Lem}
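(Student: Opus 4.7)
The plan is to reduce this Choquard-type Brezis--Lieb statement to the classical Brezis--Lieb lemma in $L^2(\R^3)$ by exploiting the semigroup property of the Riesz potential: up to a positive multiplicative constant $\kappa$, $I_\alpha$ factorises as $I_{\alpha/2}*I_{\alpha/2}$. Setting $F(w):=I_{\alpha/2}*|w|^p$, one has the identity $\int(I_\alpha*|w|^p)|w|^p\dd x=\kappa\|F(w)\|_{L^2}^2$, and Hardy--Littlewood--Sobolev (with input exponent $6/(3+\alpha)$ and output exponent $2$) guarantees that $F:L^{6p/(3+\alpha)}(\R^3,\C)\to L^2(\R^3,\R)$ is continuous; in particular $\cD(w)=\kappa\|F(w)\|_{L^2}^2$.

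The central intermediate step is a refined Brezis--Lieb type convergence for the $p$-th powers: if $\{u_n\}_n$ is bounded in $L^{6p/(3+\alpha)}(\R^3,\C)$ with $u_n\to u$ a.e., then
\[
|u_n|^p-|u_n-u|^p-|u|^p\longrightarrow 0\qquad\text{in }L^{6/(3+\alpha)}(\R^3,\R).
\]
I would establish this via the pointwise inequality $\bigl||a+b|^p-|a|^p\bigr|\le\eps|a|^p+C_\eps|b|^p$ (valid for $p\ge1$ and every $\eps>0$) used with $a=u_n-u$ and $b=u$, which yields the bound $\bigl||u_n|^p-|u_n-u|^p-|u|^p\bigr|\le\eps|u_n-u|^p+(C_\eps+1)|u|^p$; raising to the $6/(3+\alpha)$-th power and using the uniform $L^{6p/(3+\alpha)}$-bound provides equi-integrability, and Vitali's convergence theorem combined with the a.e.\ convergence to $0$ delivers the claimed $L^{6/(3+\alpha)}$-limit.

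With this in hand, set $g_n:=I_{\alpha/2}*|u_n|^p$, $h_n:=I_{\alpha/2}*|u_n-u|^p$, $g:=I_{\alpha/2}*|u|^p$. Continuity of $I_{\alpha/2}*$ from $L^{6/(3+\alpha)}$ to $L^2$ upgrades the previous convergence to $g_n-h_n-g\to 0$ strongly in $L^2$. Moreover, since $|u_n-u|^p\to 0$ a.e.\ and is bounded in $L^{6/(3+\alpha)}$, it converges weakly to $0$ in that space; weak continuity of the convolution operator by $I_{\alpha/2}$ then gives $h_n\weakto 0$ in $L^2$, and hence $g_n+h_n\weakto g$ in $L^2$.

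Finally, via the polarisation identity in $L^2$,
\[
\cD(u_n)-\cD(u_n-u)-\cD(u)=\kappa\bigl[\langle g_n-h_n-g,\,g_n+h_n\rangle_{L^2}+\langle g,\,g_n+h_n-g\rangle_{L^2}\bigr].
\]
The first bracket vanishes by Cauchy--Schwarz (strong convergence to $0$ against an $L^2$-bounded factor), and the second by the weak convergence $g_n+h_n\weakto g$ tested against the fixed $g\in L^2$. The main delicate point is the refined Brezis--Lieb estimate in $L^{6/(3+\alpha)}$ for the $p$-th powers: when $p>1$ the clean subadditivity $|(a+b)^p-a^p|\le b^p$ fails, so the diagonal $|u_n-u|^p$ contribution has to be absorbed through the $\eps$-$C_\eps$ inequality above rather than controlled directly, and this is the step that forces the integrability restrictions on $p$ in the statement.
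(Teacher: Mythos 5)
The paper does not prove this statement; it is quoted verbatim as \cite[Lemma 2.4]{MoVS} and invoked as a black box, so there is no in-paper proof to compare against. Your argument is correct and self-contained, and it differs in structure from the Moroz--Van~Schaftingen proof. They establish the same refined Brezis--Lieb convergence $|u_n|^p-|u_n-u|^p-|u|^p\to 0$ in $L^{6/(3+\alpha)}(\R^3)$ that you prove via the $\eps$-$C_\eps$ inequality and Vitali, but then work directly with the bilinear Hardy--Littlewood--Sobolev form: writing $v_n:=|u_n|^p-|u_n-u|^p$ they expand $\cD(u_n)-\cD(u_n-u)=\int(I_\alpha*v_n)v_n+2\int(I_\alpha*v_n)|u_n-u|^p$, send the first term to $\cD(u)$ by strong $L^{6/(3+\alpha)}$ convergence and HLS, and kill the second by pairing strong convergence of $I_\alpha*v_n$ in $L^{6/(3-\alpha)}$ against the weak convergence $|u_n-u|^p\weakto 0$. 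You instead insert the Riesz semigroup factorisation $I_\alpha=\kappa\,I_{\alpha/2}*I_{\alpha/2}$ to rewrite $\cD(w)=\kappa\|I_{\alpha/2}*|w|^p\|_{L^2}^2$, push everything through the bounded (hence also weak-to-weak continuous) map $I_{\alpha/2}*:L^{6/(3+\alpha)}\to L^2$, and finish with the polarisation identity in $L^2$. The two routes are mathematically equivalent and rest on the same core lemma, but yours trades the bilinear HLS bookkeeping (and the dual exponent $6/(3-\alpha)$) for a clean Hilbert-space computation, which is arguably a tidier presentation. One small remark on completeness: in the Vitali step you need tightness as well as equi-integrability on $\R^3$; your pointwise bound $|f_n|\le\eps|u_n-u|^p+(C_\eps+1)|u|^p$ does deliver both (fix $\eps$ first to beat the uniformly bounded $|u_n-u|^p$ contribution, then choose the exceptional set to make the $|u|^p$ contribution small), and it would be worth stating this explicitly since the order of quantifiers matters because $C_\eps\to\infty$.
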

	As for \eqref{eq:semilinear}, we are going to show the existence of a ground state solutions. 
	We would like to remark that, as described in details in \cite[Section 2.1]{MoVS} when $A=0$, the existence of a ground state solution can be obtained by considering different variational problems:
	direct computations show that such different approaches are, indeed, equivalent in the local case (see \cite[Proposition 2.1]{MoVS}) and in the nonlocal case (see \cite[Proposition 4.1]{dASiSq}).\\
	Here, we show the existence of nontrivial ground state solutions for \eqref{eq:Choquard} by minimizing
	\[
	\cG(u):=\frac{\|u\|^2_{A,s}}{\cD(u)^{\frac{1}{p}}}
	\]
	on $H^s_A(\R^3,\C)\setminus\{0\}$.
	\\
	Let
	\[
	\tau_A:=\inf_{u \in H^s_A(\R^3,\C)\setminus\{0\}}\cG(u).
	\]
	Observe that, by \eqref{HLS2}, $\tau_A>0$, and
	\[
	\tau_A
	=\inf\left\{\|u\|^2_{A,s} : u \in H^s_A(\R^3,\C), \cD(u) = 1 \right\}.
	\]
	
	To prove the existence result in the radial setting, we start considering the restriction of the minimization problem to the radial functions, and so we define 
	\begin{equation*}
		\tau_{A,\rad}:=\inf_{u \in H^s_{A,\rad}(\R^3,\C)\setminus\{0\}}\cG(u) = \inf\left\{\|u\|^2_{A,s} : u \in H^s_{A,\rad}(\R^3,\C), \cD(u) = 1 \right\}.
	\end{equation*}
	In this case, the statement reads as follows.
	\begin{Th}
		\label{Choquard:radial}
		Let $A:\R^3 \to \R^3$ be a vector field with locally bounded gradient, satisfying \eqref{isometry}, and $p \in \left(1+\frac{\alpha}{3},\frac{3+\alpha}{3-2s}\right)$. Then $\tau_{A,\rad}$ is achieved.
	\end{Th}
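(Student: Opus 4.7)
The plan is the direct method of the calculus of variations, using the strict Hilbert structure on $H^s_A(\R^3,\C)$ to extract a weak limit and the radial compact embedding (Lemma \ref{radial:comp:emb}) to transfer that weak convergence into genuine convergence of the Choquard-type functional $\cD$.

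First I would pick a minimizing sequence $\{u_n\}_n \subset H^s_{A,\rad}(\R^3,\C)$ with $\cD(u_n)=1$ and $\|u_n\|_{A,s}^2 \to \tau_{A,\rad}$; such a sequence exists after a standard rescaling $u \mapsto \lambda u$ in $\cG$. Boundedness in the Hilbert space $H^s_{A,\rad}(\R^3,\C)$ is immediate, so up to a subsequence $u_n \weakto u$ in $H^s_{A,\rad}(\R^3,\C)$. By the weak lower semicontinuity of the Hilbert norm (Proposition \ref{Prop:scalar:prod:Hilb:space}) I get $\|u\|_{A,s}^2 \leq \liminf_n \|u_n\|_{A,s}^2 = \tau_{A,\rad}$.

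The crux is showing $\cD(u) = 1$, which simultaneously identifies $u$ as admissible (in particular nontrivial) and closes the variational problem. Setting $q := 6p/(3+\alpha)$, the assumption $p \in \bigl(1+\alpha/3,(3+\alpha)/(3-2s)\bigr)$ translates exactly into $q \in (2,2_s^*)$. Lemma \ref{radial:comp:emb} therefore gives, up to a further subsequence, $|u_n| \to |u|$ strongly in $L^q(\R^3,\R)$, and hence $|u_n|^p \to |u|^p$ in $L^{6/(3+\alpha)}(\R^3,\R)$. The Hardy-Littlewood-Sobolev inequality behind \eqref{HLS1} shows that $\cD$ is continuous on $L^{6p/(3+\alpha)}(\R^3,\C)$ with respect to the modulus, so $\cD(u_n) \to \cD(u)$ and consequently $\cD(u) = 1$. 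Combined with $\|u\|_{A,s}^2 \leq \tau_{A,\rad}$ and the definition of $\tau_{A,\rad}$, this forces $\|u\|_{A,s}^2 = \tau_{A,\rad}$, so $u$ achieves the infimum.

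The main obstacle is precisely the passage $\cD(u_n) \to \cD(u)$: in the absence of the radial compactness provided by Lemma \ref{radial:comp:emb} one would only obtain weak convergence of $|u_n|^p$ in $L^{6/(3+\alpha)}$, and $\cD(u)$ could drop strictly below $1$, which is the vanishing/concentration phenomenon dealt with in the nonsymmetric case (cf.\ Lemma \ref{dichotomy:lemma} and the proof of Theorem \ref{semilinear:existence:th}). The symmetry hypothesis \eqref{isometry} on $A$ and the resulting compactness of the radial embedding allow me to bypass that concentration-compactness machinery entirely.
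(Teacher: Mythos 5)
Your proof is correct and follows essentially the same route as the paper: direct method, weak lower semicontinuity of the $\|\cdot\|_{A,s}$ norm, and the radial compact embedding of Lemma \ref{radial:comp:emb} to pass to the limit in $\cD$. The only (minor, favorable) deviation is that you observe $\cD$ depends on $u$ only through $|u|$, so the strong convergence $|u_n|\to|u|$ in $L^{6p/(3+\alpha)}$ plus the Hardy-Littlewood-Sobolev continuity gives $\cD(u_n)\to\cD(u)$ directly, whereas the paper additionally invokes the Brezis-Lieb splitting of Lemma \ref{Brezis:Lieb}.
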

	\begin{proof}
		Let $\{u_n\}_n \subset H^s_{A,\rad}(\R^3,\C)$ be a minimizing sequence, namely $\|u_n\|^2_{A,s} \to \tau_{A,\rad}$ and $\cD(u_n)=1$, for every $n \in \N$. 
		Then, 
		there exists a function $u \in H^s_{A,\rad}(\R^3,\C)$ such that, up to a subsequence, $u_n \weakto u$ in $H^s_{A,\rad}(\R^3,\C)$. Therefore, by Lemma \ref{radial:comp:emb}, $u_n \to u$ in $L^q(\R^3,\C)$ for every $q \in (2,2^*_s)$ and $u_n \to u$ a.e. in $\R^3$.
		Then, since in our range of $p$'s, $6p/(3+\alpha)\in (2,2_s^*)$, using also Lemma \ref{magn:sob:emb}, \eqref{HLS1}, and Lemma \ref{Brezis:Lieb}, we obtain $\cD(u)=1$.
		Hence,
		\[
		\tau_{A,\rad} 
		\leq \|u\|^2_{A,s} \leq \liminf_n \|u_n\|^2_{A,s} 
		= \tau_{A,\rad}.
		\]
	\end{proof}
	
	Now, we move in the general setting, namely removing the symmetry assumption and we consider $A$ linear. In the proof, we will argue as in \cite{MoVS} (see also \cite{dASiSq}). We get
	\begin{Th}
		\label{Choquard:main:th}
		Suppose $A:\R^3 \to \R^3$ is a vector field which satisfies \eqref{hyp:A:linear}. If $p \in \left(1+\frac{\alpha}{3},\frac{3+\alpha}{3-2s}\right)$, then $\tau_A$ achieves its minimum on $H^s_A(\R^3,\C)$.
	\end{Th}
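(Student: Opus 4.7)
The plan is to carry out a concentration-compactness argument in the spirit of \cite{MoVS,dASiSq}, using the gauge invariance available for linear $A$ (Lemma \ref{partial:gauge}) to kill translational non-compactness, and Brezis--Lieb style splittings for the Hilbert norm and for the Choquard term (Lemma \ref{Brezis:Lieb}) to close the argument.

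First I would take a minimizing sequence $\{u_n\}_n\subset H^s_A(\R^3,\C)$ with $\cD(u_n)=1$ and $\|u_n\|_{A,s}^2\to \tau_A$; in particular $\{u_n\}_n$ is bounded in $H^s_A(\R^3,\C)$. Hardy--Littlewood--Sobolev together with $\cD(u_n)=1$ forces $\|u_n\|_{L^{6p/(3+\alpha)}(\R^3,\C)}\geq C>0$. Since $6p/(3+\alpha)\in(2,2^*_s)$, combining the diamagnetic inequality (Lemma \ref{diamagnetic:eq}) with Lemma \ref{vanishing:lemma} applied to $\{|u_n|\}_n$ rules out vanishing, so there exist $\delta,R>0$ and $\{\xi_n\}_n\subset \R^3$ with
\[
\int_{B_R(\xi_n)}|u_n|^{\frac{6p}{3+\alpha}}\dd x\geq \delta\quad\text{for every }n.
\]

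Next I would translate exploiting the linear structure: set $v_n(x):=\e^{-iA(\xi_n)\cdot x}u_n(x+\xi_n)$. Because $A$ is linear, Lemma \ref{partial:gauge} applied with $\xi=\xi_n$ and $\eta=-A(\xi_n)$ gives $A_\eta(x)=A(x+\xi_n)-A(\xi_n)=A(x)$, hence $v_n\in H^s_A(\R^3,\C)$ with $\|v_n\|_{A,s}=\|u_n\|_{A,s}$; moreover $\cD(v_n)=\cD(u_n)=1$ because $|v_n|$ is a pure translate of $|u_n|$. The concentration is now anchored at the origin, $\int_{B_R(0)}|v_n|^{6p/(3+\alpha)}\dd x\geq \delta$, so up to a subsequence $v_n\weakto v$ in $H^s_A(\R^3,\C)$; by Lemma \ref{magn:sob:emb}, $v_n\to v$ in $L^{6p/(3+\alpha)}_{\loc}(\R^3,\C)$ and a.e., which forces $v\neq 0$.

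Finally I would perform the two-term splitting. Since $H^s_A(\R^3,\C)$ is Hilbert and $v_n\weakto v$, $\|v_n\|_{A,s}^2=\|v\|_{A,s}^2+\|v_n-v\|_{A,s}^2+o(1)$; the a.e.\ convergence and the $L^{6p/(3+\alpha)}$-boundedness allow Lemma \ref{Brezis:Lieb} to give $\cD(v_n)=\cD(v)+\cD(v_n-v)+o(1)$. Setting $c:=\cD(v)\in(0,1]$, so that $\cD(v_n-v)\to 1-c$, the definition of $\tau_A$ yields $\|v\|_{A,s}^2\geq \tau_A\,c^{1/p}$ and $\liminf_n\|v_n-v\|_{A,s}^2\geq \tau_A(1-c)^{1/p}$, whence
\[
\tau_A\;\geq\;\tau_A\bigl(c^{1/p}+(1-c)^{1/p}\bigr).
\]
The main obstacle is precisely to close this inequality: since $1/p<1$ and $c\in(0,1]$, strict concavity of $t\mapsto t^{1/p}$ gives $c^{1/p}+(1-c)^{1/p}\geq 1$ with equality only for $c\in\{0,1\}$, and $c=0$ is excluded by $v\neq 0$. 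Therefore $c=1$, which yields $\cD(v)=1$, $\|v_n-v\|_{A,s}\to 0$, and $\|v\|_{A,s}^2=\tau_A$, so $v$ achieves the infimum. The two delicate points are thus the strict-subadditivity dichotomy above, and the need for the clean identity $\|v_n\|_{A,s}=\|u_n\|_{A,s}$ after translation, which is exactly where the linearity assumption on $A$ is used in an essential way.
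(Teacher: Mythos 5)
Your proposal is correct and follows essentially the same route as the paper's proof. You rule out vanishing (the paper does this directly via the Lions-type inequality from \cite[Lemma 2.2]{dASiSq} together with the diamagnetic inequality, while you reach the same point by contradiction via Lemma \ref{vanishing:lemma}, both equally valid), you use the gauge invariance for linear $A$ (Lemma \ref{partial:gauge}) to recenter, you extract a nonzero weak limit via the local compact embedding, and you split both the Hilbert norm and the Choquard term $\cD$ (Lemma \ref{Brezis:Lieb}). The only presentational difference is at the very end: you isolate $c=\cD(v)$ and invoke the strict subadditivity $c^{1/p}+(1-c)^{1/p}>1$ for $c\in(0,1)$ to force $c=1$, whereas the paper writes the same fact as a chain of inequalities that collapses to equality; these are the same argument in different clothing, and both yield $\cD(v)=1$, $\|v_n-v\|_{A,s}\to 0$, and $\|v\|_{A,s}^2=\tau_A$.
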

	\begin{proof}
		Let $\{u_n\}_n \subset H^s_A(\R^3,\C)$ be a minimizing sequence, namely $\|u_n\|^2_{A,s} \to \tau_A$ with $\cD(u_n) = 1$ for every $n \in \N$. Thus, the sequence $\{u_n\}_n$ is bounded in $H^s_A(\R^3,\C)$, and so, by \eqref{HLS1}, \cite[Lemma 2.2]{dASiSq}, and Lemma \ref{diamagnetic:eq}, 
		\begin{align*}
			1&=\cD(u_n) 
			\leq 
			C\|u_n\|^{2p}_{L^{\frac{6p}{3+\alpha}}(\R^3,\C)}
			\leq 
			C\left(\left(\sup_{x \in \R^3}\int_{B_1(x)}|u_n|^{\frac{6p}{3+\alpha}} \dd x \right)^{1-\frac{3+\alpha}{3p}} \||u_n|\|^2_{H^s(\R^3,\R)}\right)^{1+\frac{\alpha}{3}} \\
			&\leq 
			C\left(\left(\sup_{x \in \R^3}\int_{B_1(x)}|u_n|^{\frac{6p}{3+\alpha}} \dd x \right)^{1-\frac{3+\alpha}{3p}} \|u_n\|^2_{A,s}\right)^{1+\frac{\alpha}{3}} 
			\leq
			C\left(\sup_{x \in \R^3}\int_{B_1(x)}|u_n|^{\frac{6p}{3+\alpha}} \dd x \right)^{\left(1-\frac{3+\alpha}{3p}\right)\left(1+\frac{\alpha}{3}\right)}.
		\end{align*}
		Then, since $p>1+\alpha/3$, there exists a sequence $\{x_n\}_n \subset \R^3$ such that
		\begin{equation*}
			\inf_{n \in \N}\int_{B_1(x_n)}|u_n|^{\frac{6p}{3+\alpha}} \dd x > 0.
		\end{equation*}
		Now let us consider $v_n(x):=\e^{-iA(x_n)\cdot x}u_n(x+x_n)$. We have that  $\cD(v_n)=\cD(u_n)=1$ and, since $A$ satisfies \eqref{hyp:A:linear}, as observed in the Proof of Theorem \ref{semilinear:existence:th},
		$\|v_n\|^2_{A,s} = \|u_n\|^2_{A,s}$. Then $\{v_n\}_n$ is bounded in $H^s_A(\R^3,\C)$, $\|v_n\|^2_{A,s} \to \tau_A$, and 
		\begin{equation}
			\label{lions:vn:for:Choquard}
			\inf_{n \in \N}\int_{B_1(0)}|v_n|^{\frac{6p}{3+\alpha}} \dd x > 0.
		\end{equation}
		Therefore, there exists $v \in H^s_A(\R^3,\C)$ such that, up to a subsequence, $v_n \weakto v$ and
		\[
		\|v\|^2_{A,s} = \lim_n\left(\|v_n\|^2_{A,s} - \|v_n-v\|^2_{A,s}\right).
		\]
		In addition, by Lemma \ref{magn:sob:emb}, $v_n \to v$ almost everywhere in $\R^3$, and, using also \eqref{lions:vn:for:Choquard} and that $p < (3+\alpha)(3-2s)$,  $v \neq 0$. Moreover, by Fatou Lemma, 
		\[
		0<\cD(v) \leq \liminf_n\cD(v_n)=1.
		\]
		If $v_n \neq v$, using also Lemma \ref{Brezis:Lieb},
		\begin{equation}
			\label{Sv:tau:equivalence}
			\begin{split}
				\tau_A \cD(v)^{\frac{1}{p}}
				&\leq \cG(v)\cD(v)^{\frac{1}{p}}
				=
				\lim_n\left[\|v_n\|^2_{A,s}-\|v_n-v\|^2_{A,s}\right] \\
				&=\lim_{n}\left[\|u_n\|^2_{A,s} - \cG(v_n-v)\cD(v_n-v)^{\frac{1}{p}} \right]
				=\limsup_{n}\left[\|u_n\|^2_{A,s} - \cG(v_n-v)\cD(v_n-v)^{\frac{1}{p}} \right]\\
				& = \tau_A - \liminf_n \cG(v_n-v)\cD(v_n-v)^{\frac{1}{p}}
				\leq \tau_A \left[1 - \liminf_n \cD(v_n-v)^{\frac{1}{p}} \right]\\
				& = \tau_A \left[1 - (1-\cD(v))^{\frac{1}{p}} \right]
				\leq \tau_A \cD(v)^{\frac{1}{p}}.
			\end{split}
		\end{equation}
		Thus $G(v)=\tau_A$ and so we can conclude. Observe also that $\cD(v) = 1$ since, if $\cD(v) < 1$, the last inequality in \eqref{Sv:tau:equivalence} should be a strict inequality.
	\end{proof}

	\appendix
	\section{Properties of the modified Bessel function}
	\label{Appendix:Bessel}
	
	Before starting with its definition, we recall some useful properties of the modified Bessel functions $\mathcal{K}_\nu$.
	It is well known that they are solutions to the Bessel's differential equations
	\[
	\zeta^2K'' + \zeta K' - (\zeta^2+{\nu}^2)K = 0,
	\]
	they have the following integral representation
	\begin{equation}
		\label{integral:repr:Bessel}
		\bessel{\nu}{\zeta} = \frac12\left(\frac{\zeta}{2}\right)^{\nu}\int_0^{+\infty}\e^{-\frac{\zeta^2}{4t} - t}t^{-1-\nu} \dd t \qquad \text{for } \zeta>0
	\end{equation}
	that actually holds in a more general setting, i.e. for $\zeta \in \C$ with $|\arg \zeta | < \pi/4$ (see \cite[Section 6]{Watson1944}),
	and
	\begin{alignat}{3}
		\label{bessel:positive}
		&\mathcal{K}_\nu (\zeta) > 0 & & \qquad \text{for } \zeta > 0 && \text{ and } \nu \in \R,\\
		\label{Gauntest1}
		&\mathcal{K}_\nu (\zeta)\sim 2^{|\nu|-1} \Gamma(|\nu|) \zeta^{-|\nu|} & & \qquad \text{as } \zeta \downarrow 0, && \text{ for } \nu \neq 0,\\
		\label{Gauntest2}
		&\mathcal{K}_\nu (\zeta)\sim \sqrt{\frac{\pi}{2\zeta}} \e^{-\zeta} & & \qquad \text{as } \zeta\to +\infty, && \text{ for } \nu \in \R,\\
		\label{nunumeno1}
		& \mathcal{K}'_\nu = -\frac{\nu}{\zeta}\mathcal{K}_{\nu} - \mathcal{K}_{\nu-1}.
	\end{alignat}
	Often, in our arguments, for $\nu > 0$, it will be enough to use the rough estimate
	\begin{equation}\label{Bessel:bound}
		0 < \mathcal{K}_{\nu}(\zeta) \leq C\zeta^{-\nu} \qquad \text{for } \zeta > 0,
	\end{equation}
	where $C>0$ depends on $\nu$.\\
	In addition, for $\zeta > 0$ and $\nu \geq 0$, the modified Bessel functions $\mathcal{K}_{\nu}$ are continuous, positive functions of $\nu$ and $\zeta$, and for fixed $\nu$, $\mathcal{K}_{\nu}$ is decreasing in $\zeta$ and, for fixed $\zeta$, we have increasing in $\nu$. For all these properties, we refer to \cite[Chapter 12, Section 1.1]{Olver1997}.

	Furthermore, if we consider $\theta \in H^1(\R^+;\zeta^{1-2s}) := \left\{\eta=\eta(\zeta) \in L^2(\R^+,\R) : \zeta^{1-2s} \eta' \in L^2(\R^+,\R) \right\}$ which solves
	\begin{equation}
		\label{theta:eq}
		\begin{cases}
			\displaystyle\theta'' + \frac{1-2s}{\zeta}\theta' - \theta = 0, \\
			\theta(0)=1,
		\end{cases}
	\end{equation}
	we have that $\theta$ can be expressed in terms of the modified Bessel function, namely 
	\begin{equation*}
		\theta(\zeta)=\frac{2}{\Gamma(s)}\left(\frac{\zeta}{2}\right)^s\mathcal{K}_s(\zeta).
	\end{equation*}
	By \eqref{nunumeno1}, since $s \in (0,1)$, it is easy to see that
	\begin{equation}
		\label{theta:derivative}
		\theta'(\zeta) 
		= -\frac{2^{1-s}}{\Gamma(s)}\zeta^s\mathcal{K}_{s-1}(\zeta).
	\end{equation}
	Moreover, see e.g. \cite{FaFe},
	\begin{equation}
		\label{chain:equivalence}
		\int_0^{+\infty} \zeta^{1-2s}\left(|\theta'(\zeta)|^2 + |\theta(\zeta)|^2\right) \dd \zeta = -\lim_{\zeta \to 0^+}\zeta^{1-2s}\theta'(\zeta) = 2^{1-2s}\frac{\Gamma(1-s)}{\Gamma(s)} := \kappa_s.
	\end{equation}
	
	\section{Proof of Proposition \ref{normHmsC}}\label{Appendix:Prop:6}
	\begin{proof}[Proof of Proposition \ref{normHmsC}]
		The first part is a consequence of the equivalence of the norms \eqref{Hs:norm} and \eqref{Hsm:norm} due to \eqref{symbols:inequalities}.\\ 
		To prove \eqref{Hs:equivalent:norm} let $u \in H^s(\R^3,\C)$ and consider the function $\theta \in H^1(\R^+;\zeta^{1-2s})$ introduced in \eqref{theta:eq}.\\
		First, observe that by \eqref{chain:equivalence},
		\begin{align*}
			&\int_{\R^3}\lim_{t \to 0^+}\frac{\theta\left(\sqrt{|\xi|^2 + m^2}t\right) - 1}{t^{2s}} \left|\cF u(\xi)\right|^2 \dd \xi\\
			& \qquad = \frac{1}{2s}\int_{\R^3}\lim_{t \to 0^+}t^{1-2s}\sqrt{|\xi|^2 + m^2}\theta'\left(\sqrt{|\xi|^2 + m^2}t\right)\left|\cF u(\xi)\right|^2 \dd \xi \\
			& \qquad = \frac{1}{2s}\int_{\R^3}\lim_{t \to 0^+}\left(\sqrt{|\xi|^2 + m^2}t\right)^{1-2s}\theta'\left(\sqrt{|\xi|^2 + m^2}t\right)\left(|\xi|^2 + m^2\right)^s\left|\cF u(\xi)\right|^2 \dd \xi \\
			& \qquad = -\frac{\kappa_s}{2s}\int_{\R^3}\left(|\xi|^2 + m^2\right)^s\left|\cF u(\xi)\right|^2 \dd \xi
			= -\frac{\kappa_s}{2s} \| u\|_{H_m^s(\R^3,\C)}^2.
		\end{align*}
		Now, let $P_m$ be the fundamental solution of
		\begin{equation}
			\label{ext:whole:space}
			-\divv (|t|^{1-2s}\nabla U) + m^2|t|^{1-2s}U = \delta_0, \quad \text{ in } \R^3 \times \R.
		\end{equation}
		Direct computations show that
		\begin{equation}
			\label{Bessel:kernel}
			P_m(x,t)=C'_sm^{\frac{3+2s}{2}}\frac{t^{2s}}{\left(|x|^2+t^2\right)^{\frac{3+2s}{4}}}\mathcal{K}_{\frac{3+2s}{2}}\left(m\sqrt{|x|^2+t^2}\right).
		\end{equation}
		Here, $C'_s$ is the normalizing constant that, by \eqref{Gauntest1}, must satisfy
		\[
		C'_s2^{\frac{1+2s}{2}}\Gamma\left(\frac{3+2s}{2}\right) = p_s,
		\]
		where
		\[
		p_s=\frac{\Gamma\left(\frac{3+2s}{2}\right)}{\pi^{3/2}\Gamma(s)}
		\]
		is the normalizing constant for the \textit{Poisson kernel}\footnote{The fundamental solution of \eqref{ext:whole:space} with $m=0$.}, so that
		\begin{equation}
			\label{constant:Cs'}
			C'_s=\frac{2^{-\frac{1+2s}{2}}}{\pi^{3/2}\Gamma(s)} = \frac{\kappa_s}{2s}C_s.
		\end{equation}
		Moreover, for every fixed $t > 0$, $P_m(\cdot,t)$ is the Fourier transform of the map $\xi \mapsto \theta\left(\sqrt{|\xi|^2 + m^2}t\right)$ and 
		\begin{equation}
			\label{nice:formula:P:theta}
			\int_{\R^3} P_m(x,t) \dd x = \theta(mt).
		\end{equation}
		Thus, given $t>0$, since $P_m(\cdot,t)$ is even and by \eqref{nice:formula:P:theta},
		\begin{align*}
			&\int_{\R^3}\theta\left(\sqrt{|\xi|^2 + m^2}t\right) \left|\cF u(\xi)\right|^2 \dd \xi\\
			&\qquad
			= \int_{\R^3}\cF^{-1} P_m(\cdot,t)(\xi) \overline{\cF u(\xi)} \cF u(\xi)\dd \xi
			= \int_{\R^3}\cF^{-1} P_m(\cdot,t)(\xi) \cF^{-1} \overline{u}(\xi)\cF u(\xi)\dd \xi
			\\
			&\qquad
			= \int_{\R^3}\cF^{-1} \left(P_m(\cdot,t) * \overline{u}\right)(\xi) \cF u(\xi)\dd \xi
			= \int_{\R^3}[P_m(\cdot,t)*\overline{u}](x)u(x)\dd x \\
			& \qquad
			= \frac12\int_{\R^3 \times \R^3}P_m(x-y,t)\overline{u(y)} u(x) \dd x \dd y
			+ \frac12\int_{\R^3 \times \R^3}P_m(y-x,t) \overline{u(x)} u(y)\dd x \dd y \\
			& \qquad
			= \frac12\int_{\R^3 \times \R^3}P_m(x-y,t)\left(\overline{u(y)} u(x) + \overline{u(x)} u(y) \right)\dd x \dd y\\
			& \qquad
			= -\frac{1}{2}\int_{\R^3\times\R^3} P_m (x-y,t)|u(x)-u(y)|^2 \dd x \dd y
			+ \frac{1}{2}\int_{\R^3\times\R^3} P_m(x-y,t)|u(x)|^2 \dd x \dd y \\
			& \qquad \qquad
			+ \frac{1}{2}\int_{\R^3\times\R^3} P_m(x-y,t)|u(y)|^2 \dd x \dd y \\
			& \qquad
			= -\frac{1}{2}\int_{\R^3\times\R^3} P_m (x-y,t)|u(x)-u(y)|^2 \dd x \dd y 
			+ \int_{\R^3\times\R^3} P_m(x-y,t)|u(x)|^2 \dd x \dd y\\
			& \qquad
			= -\frac{1}{2}\int_{\R^3\times\R^3} P_m (x-y,t)|u(x)-u(y)|^2 \dd x \dd y 
			+ \int_{\R^3} \left( \int_{\R^3} P_m(x-y,t) \dd y\right) |u(x)|^2 \dd x\\
			& \qquad 
			= -\frac{1}{2}\int_{\R^3\times\R^3}P_m(x-y,t)|u(x)-u(y)|^2\dd x \dd y
			+ \theta(tm) \int_{\R^3}|u(x)|^2\dd x.
		\end{align*}
		Hence, using also \eqref{Bessel:kernel}, we obtain
		\begin{equation}
			\begin{split}
				\label{intermediate:theta:passage}
				&\int_{\R^3}\frac{\theta\left(\sqrt{|\xi|^2 + m^2}t\right) - 1}{t^{2s}} \left|\cF{u}(\xi)\right|^2 \dd \xi \\
				&= \frac{\theta(tm)-1}{t^{2s}}\|u\|^2_{L^2(\R^3,\C)} - \frac{C'_s}{2}m^{\frac{3+2s}{2}}\int_{\R^3 \times \R^3}\frac{|u(x)-u(y)|^2}{\left(t^2+|x-y|^2\right)^{\frac{3+2s}{4}}} \mathcal{K}_{\frac{3+2s}{2}}\left(m\sqrt{t^2+|x-y|^2}\right)\dd x \dd y.
			\end{split}
		\end{equation}
		Observe that, by \eqref{theta:derivative} and \eqref{Gauntest1},
		\begin{equation}
			\label{finitness:of:limit}
			\lim_{t \to 0^+}\frac{\theta(t)-1}{t^{2s}} = \frac{1}{2s}\lim_{t \to 0^+} \frac{\theta'(t)}{t^{2s-1}} = -\frac{1}{2^ss\Gamma(s)} \lim_{t \to 0^+}\frac{\bessel{s-1}{t}}{t^{s-1}} = -\frac{\Gamma(1-s)}{2^{2s}s\Gamma(s)} = -\frac{\kappa_s}{2s}.
		\end{equation}
		By \eqref{Bessel:bound}, for a.e. $x,y \in \R^3$ and $t>0$,
		\begin{align*}
			0 < m^{\frac{3+2s}{2}}\frac{|u(x)-u(y)|^2}{\left(t^2+|x-y|^2\right)^{\frac{3+2s}{4}}} \mathcal{K}_{\frac{3+2s}{2}}\left(m\sqrt{t^2+|x-y|^2}\right) &
			\leq C\frac{|u(x)-u(y)|^2}{\left(t^2+|x-y|^2\right)^{\frac{3+2s}{2}}} \\
			&
			\leq C\frac{|u(x)-u(y)|^2}{|x-y|^{3+2s}} \in L^1(\R^3 \times \R^3,\R),
		\end{align*}
		since $u \in H^s(\R^3,\C)$.\\
		Thus, by \eqref{intermediate:theta:passage}, \eqref{finitness:of:limit}, and \eqref{constant:Cs'},
		\begin{align*}
			&\lim_{t \to 0^+} \int_{\R^3}\frac{\theta\left(\sqrt{|\xi|^2 + m^2}t\right) - 1}{t^{2s}} \left|\cF u(\xi)\right|^2 \dd \xi \\
			&\quad = -\frac{\kappa_s}{2s}\left[m^{2s}\|u\|^2_{L^2(\R^3,\C)} + \frac{C_s}{2}m^{\frac{3+2s}{2}}\int_{\R^3 \times \R^3} \frac{|u(x)-u(y)|^2}{|x-y|^{\frac{3+2s}{2}}} \mathcal{K}_{\frac{3+2s}{2}}\left(m|x-y|\right)\dd x \dd y\right].
		\end{align*}
		Finally, since, by \eqref{finitness:of:limit} and by \eqref{Bessel:bound}, the function $0<t \mapsto (\theta(t)-1)/t^{2s}$ is bounded, then, using \eqref{Hsm:norm}, we have
		\[
		\left|\frac{\theta\left(\sqrt{|\xi|^2 + m^2}t\right)-1}{t^{2s}} \left|\cF{u}(\xi)\right|^2\right|
		\leq C\left(|\xi|^2 + m^2\right)^s \left|\cF u(\xi)\right|^2 \in L^1(\R^3).
		\]
		Hence we get
		\begin{equation*}
			\lim_{t \to 0^+} \int_{\R^3}\frac{\theta\left(\sqrt{|\xi|^2 + m^2}t\right) - 1}{t^{2s}} \left|\cF u(\xi)\right|^2 \dd \xi = \int_{\R^3}\lim_{t \to 0^+} \frac{\theta\left(\sqrt{|\xi|^2 + m^2}t\right) - 1}{t^{2s}} \left|\cF{u}(\xi)\right|^2 \dd \xi
		\end{equation*}
		concluding the proof of \eqref{Hs:equivalent:norm}.
	\end{proof}

	\section*{Acknowledgements}
	The authors are members of GNAMPA (INdAM) and have been partially supported by PRIN 2017JPCAPN {\em Qualitative and quantitative aspects of nonlinear PDEs}. Federico Bernini is partially supported by GNAMPA Project 2024 {\em Problemi spettrali e di ottimizzazione di forma: aspetti qualitativi e stime quantitative}. Pietro d'Avenia is partially supported by European Union - Next Generation EU - PRIN 2022 PNRR P2022YFAJH {\em Linear and Nonlinear PDE’s: New directions and Applications}, by the Italian Ministry of University and Research under the Program Department of Excellence L. 232/2016 (Grant No. CUP D93C23000100001), and by GNAMPA Project 2024 {\em Metodi variazionali e topologici per alcune equazioni di Schr\"odinger nonlineari}.


\begin{thebibliography}{10}
		
		\bibitem{AffiliValdinoci}
		E.~Affili and E.~Valdinoci.
		\newblock Decay estimates for evolution equations with classical and fractional
		time-derivatives.
		\newblock {\em J. Differential Equations}, 266(7):4027--4060, 2019.
		
		\bibitem{Ambrosio2016}
		V.~Ambrosio.
		\newblock Ground states solutions for a non-linear equation involving a
		pseudo-relativistic {S}chr\"{o}dinger operator.
		\newblock {\em J. Math. Phys.}, 57(5):051502, 18 pp, 2016.
		
		\bibitem{Ambrosio2019-3}
		V.~Ambrosio.
		\newblock Existence and concentration results for some fractional
		{S}chr\"odinger equations in {$\R^N$} with magnetic fields.
		\newblock {\em Comm. Partial Differential Equations}, 44(8):637--680, 2019.
		
		\bibitem{Ambrosio2021-1}
		V.~Ambrosio.
		\newblock {\em Nonlinear fractional {S}chr\"{o}dinger equations in {$\R^N$}}.
		\newblock Frontiers in Elliptic and Parabolic Problems.
		Birkh\"{a}user/Springer, Cham, 2021.
		
		\bibitem{AmBuMePe}
		V.~Ambrosio, H.~Bueno, A.~H.~S. Medeiros, and G.~A. Pereira.
		\newblock On the convergence of the fractional relativistic {S}chr\"odinger
		operator.
		\newblock {\em Bull. Braz. Math. Soc. (N.S.)}, 54(4):56, 28 pp, 2023.
		
		\bibitem{AmEs2024}
		V.~Ambrosio and F.~Essebei.
		\newblock A {B}ourgain-{B}rezis-{M}ironescu type result for the fractional
		relativistic seminorm.
		\newblock {\em Proc. Amer. Math. Soc.}, 152(6):2529--2539, 2024.
		
		\bibitem{ArioliSzulkin2003}
		G.~Arioli and A.~Szulkin.
		\newblock A semilinear {S}chr\"{o}dinger equation in the presence of a magnetic
		field.
		\newblock {\em Arch. Ration. Mech. Anal.}, 170(4):277--295, 2003.
		
		\bibitem{AvronHerbstSimon}
		J.~Avron, I.~Herbst, and B.~Simon.
		\newblock Schr\"{o}dinger operators with magnetic fields. {I}. {G}eneral
		interactions.
		\newblock {\em Duke Math. J.}, 45(4):847--883, 1978.
		
		\bibitem{BaDoPlRe}
		T.~Bartsch, T.~Dohnal, M.~Plum, and W.~Reichel.
		\newblock Ground states of a nonlinear curl-curl problem in cylindrically
		symmetric media.
		\newblock {\em NoDEA Nonlinear Differential Equations Appl.}, 23(5):52, 34 pp,
		2016.
		
		\bibitem{BerezinSubin}
		F.~A. Berezin and M.~A. {\v S}ubin.
		\newblock Symbols of operators and quantization.
		\newblock In {\em Hilbert space operators and operator algebras ({P}roc.
			{I}nternat. {C}onf., {T}ihany, 1970)}, volume Vol. 5 of {\em Colloq. Math.
			Soc. J\'anos Bolyai}, pages 21--52. North-Holland, Amsterdam-London, 1972.
		
		\bibitem{Bergmann}
		P.~G. Bergmann.
		\newblock {\em Introduction to the {T}heory of {R}elativity}.
		\newblock Prentice-Hall, Inc., New York, 1942.
		
		\bibitem{BeBiSe2022}
		F.~Bernini, B.~Bieganowski, and S.~Secchi.
		\newblock Semirelativistic {C}hoquard equations with singular potentials and
		general nonlinearities arising from {H}artree-{F}ock theory.
		\newblock {\em Nonlinear Anal.}, 217:112738, 26 pp, 2022.
		
		\bibitem{BieganowskiSecchi2019}
		B.~Bieganowski and S.~Secchi.
		\newblock The semirelativistic {C}hoquard equation with a local nonlinear term.
		\newblock {\em Discrete Contin. Dyn. Syst.}, 39(7):4279--4302, 2019.
		
		\bibitem{BoNyVS2019}
		D.~Bonheure, M.~Nys, and J.~Van~Schaftingen.
		\newblock Properties of ground states of nonlinear {S}chr\"{o}dinger equations
		under a weak constant magnetic field.
		\newblock {\em J. Math. Pures Appl. (9)}, 124:123--168, 2019.
		
		\bibitem{BoBrMi}
		J.~Bourgain, H.~Brezis, and P.~Mironescu.
		\newblock Another look at {S}obolev spaces.
		\newblock In {\em Optimal control and partial differential equations}, pages
		439--455. IOS, Amsterdam, 2001.
		
		\bibitem{BrPa}
		L.~Brasco and E.~Parini.
		\newblock The second eigenvalue of the fractional {$p$}-{L}aplacian.
		\newblock {\em Adv. Calc. Var.}, 9(4):323--355, 2016.
		
		\bibitem{BuenoMiyagakiPereira}
		H.~Bueno, O.~H. Miyagaki, and G.~A. Pereira.
		\newblock Remarks about a generalized pseudo-relativistic {H}artree equation.
		\newblock {\em J. Differential Equations}, 266(1):876--909, 2019.
		
		\bibitem{CarmonaMastersSimon}
		R.~Carmona, W.~C. Masters, and B.~Simon.
		\newblock Relativistic {S}chr\"odinger operators: asymptotic behavior of the
		eigenfunctions.
		\newblock {\em J. Funct. Anal.}, 91(1):117--142, 1990.
		
		\bibitem{ChoiSeok}
		W.~Choi and J.~Seok.
		\newblock Nonrelativistic limit of standing waves for pseudo-relativistic
		nonlinear {S}chr\"odinger equations.
		\newblock {\em J. Math. Phys.}, 57(2):021510, 15 pp, 2016.
		
		\bibitem{CiClSe2012}
		S.~Cingolani, M.~Clapp, and S.~Secchi.
		\newblock Multiple solutions to a magnetic nonlinear {C}hoquard equation.
		\newblock {\em Z. Angew. Math. Phys.}, 63(2):233--248, 2012.
		
		\bibitem{CiSe2015-2}
		S.~Cingolani and S.~Secchi.
		\newblock Semiclassical analysis for pseudo-relativistic {H}artree equations.
		\newblock {\em J. Differential Equations}, 258(12):4156--4179, 2015.
		
		\bibitem{CiSe2018}
		S.~Cingolani and S.~Secchi.
		\newblock Intertwining solutions for magnetic relativistic {H}artree type
		equations.
		\newblock {\em Nonlinearity}, 31(5):2294--2318, 2018.
		
		\bibitem{CotiZelatiNolasco2011}
		V.~Coti~Zelati and M.~Nolasco.
		\newblock Existence of ground states for nonlinear, pseudo-relativistic
		{S}chr\"{o}dinger equations.
		\newblock {\em Atti Accad. Naz. Lincei Rend. Lincei Mat. Appl.}, 22(1):51--72,
		2011.
		
		\bibitem{dASiSq}
		P.~d'Avenia, G.~Siciliano, and M.~Squassina.
		\newblock On fractional {C}hoquard equations.
		\newblock {\em Math. Models Methods Appl. Sci.}, 25(8):1447--1476, 2015.
		
		\bibitem{dASq}
		P.~d'Avenia and M.~Squassina.
		\newblock Ground states for fractional magnetic operators.
		\newblock {\em ESAIM Control Optim. Calc. Var.}, 24(1):1--24, 2018.
		
		\bibitem{DiPaVa}
		E.~Di~Nezza, G.~Palatucci, and E.~Valdinoci.
		\newblock Hitchhiker's guide to the fractional {S}obolev spaces.
		\newblock {\em Bull. Sci. Math.}, 136(5):521--573, 2012.
		
		\bibitem{ElgartSchlein}
		A.~Elgart and B.~Schlein.
		\newblock Mean field dynamics of boson stars.
		\newblock {\em Comm. Pure Appl. Math.}, 60(4):500--545, 2007.
		
		\bibitem{ErMaObTr}
		A.~Erd\'{e}lyi, W.~Magnus, F.~Oberhettinger, and F.~G. Tricomi.
		\newblock {\em Higher transcendental functions. {V}ol. {II}}.
		\newblock Robert E. Krieger Publishing Co., Inc., Melbourne, Fla., 1981.
		\newblock Based on notes left by Harry Bateman, Reprint of the 1953 original.
		
		\bibitem{EstebanLions}
		M.~J. Esteban and P.-L. Lions.
		\newblock Stationary solutions of nonlinear {S}chr\"odinger equations with an
		external magnetic field.
		\newblock In {\em Partial differential equations and the calculus of
			variations, {V}ol.\ {I}}, volume~1 of {\em Progr. Nonlinear Differential
			Equations Appl.}, pages 401--449. Birkh\"auser Boston, Boston, MA, 1989.
		
		\bibitem{FaFe}
		M.~M. Fall and V.~Felli.
		\newblock Unique continuation properties for relativistic {S}chr\"{o}dinger
		operators with a singular potential.
		\newblock {\em Discrete Contin. Dyn. Syst.}, 35(12):5827--5867, 2015.
		
		\bibitem{Felsager}
		B.~Felsager.
		\newblock {\em Geometry, particles, and fields}.
		\newblock Graduate Texts in Contemporary Physics. Springer-Verlag, New York,
		1998.
		\newblock Corrected reprint of the 1981 edition.
		
		\bibitem{FiPiVe}
		A.~Fiscella, A.~Pinamonti, and E.~Vecchi.
		\newblock Multiplicity results for magnetic fractional problems.
		\newblock {\em J. Differential Equations}, 263(8):4617--4633, 2017.
		
		\bibitem{FrankLiebSeiringer}
		R.~L. Frank, E.~H. Lieb, and R.~Seiringer.
		\newblock Hardy-{L}ieb-{T}hirring inequalities for fractional {S}chr\"odinger
		operators.
		\newblock {\em J. Amer. Math. Soc.}, 21(4):925--950, 2008.
		
		\bibitem{FrolichJonssonLenzmann}
		J.~Fr\"{o}hlich, B.~L.~G. Jonsson, and E.~Lenzmann.
		\newblock Boson stars as solitary waves.
		\newblock {\em Comm. Math. Phys.}, 274(1):1--30, 2007.
		
		\bibitem{GrossmannLoupiasStein}
		A.~Grossmann, G.~Loupias, and E.~M. Stein.
		\newblock An algebra of pseudodifferential operators and quantum mechanics in
		phase space.
		\newblock {\em Ann. Inst. Fourier (Grenoble)}, 18:343--368, viii, 1968.
		
		\bibitem{GuoZeng}
		Y.~Guo and X.~Zeng.
		\newblock The {L}ieb-{Y}au conjecture for ground states of pseudo-relativistic
		{B}oson stars.
		\newblock {\em J. Funct. Anal.}, 278(12):108510, 24 pp, 2020.
		
		\bibitem{Hormander1985}
		L.~H\"ormander.
		\newblock {\em The analysis of linear partial differential operators. {III}},
		volume 274 of {\em Grundlehren der mathematischen Wissenschaften [Fundamental
			Principles of Mathematical Sciences]}.
		\newblock Springer-Verlag, Berlin, 1985.
		\newblock Pseudodifferential operators.
		
		\bibitem{Ichinose}
		T.~Ichinose.
		\newblock Magnetic relativistic {S}chr\"{o}dinger operators and imaginary-time
		path integrals.
		\newblock In {\em Mathematical physics, spectral theory and stochastic
			analysis}, volume 232 of {\em Oper. Theory Adv. Appl.}, pages 247--297.
		Birkh\"{a}user/Springer Basel AG, Basel, 2013.
		
		\bibitem{IchinoseTamura}
		T.~Ichinose and H.~Tamura.
		\newblock Imaginary-time path integral for a relativistic spinless particle in
		an electromagnetic field.
		\newblock {\em Comm. Math. Phys.}, 105(2):239--257, 1986.
		
		\bibitem{IftimieMantoiuPurice2007}
		V.~Iftimie, M.~M{\u a}ntoiu, and R.~Purice.
		\newblock Magnetic pseudodifferential operators.
		\newblock {\em Publ. Res. Inst. Math. Sci.}, 43(3):585--623, 2007.
		
		\bibitem{Lenzmann2009}
		E.~Lenzmann.
		\newblock Uniqueness of ground states for pseudorelativistic {H}artree
		equations.
		\newblock {\em Anal. PDE}, 2(1):1--27, 2009.
		
		\bibitem{LiLo}
		E.~H. Lieb and M.~Loss.
		\newblock {\em Analysis}, volume~14 of {\em Graduate Studies in Mathematics}.
		\newblock American Mathematical Society, Providence, RI, second edition, 2001.
		
		\bibitem{LiebSeiringer}
		E.~H. Lieb and R.~Seiringer.
		\newblock {\em The stability of matter in quantum mechanics}.
		\newblock Cambridge University Press, Cambridge, 2010.
		
		\bibitem{LiebThirring}
		E.~H. Lieb and W.~E. Thirring.
		\newblock Gravitational collapse in quantum mechanics with relativistic kinetic
		energy.
		\newblock {\em Ann. Physics}, 155(2):494--512, 1984.
		
		\bibitem{LiebYau87}
		E.~H. Lieb and H.-T. Yau.
		\newblock The {C}handrasekhar theory of stellar collapse as the limit of
		quantum mechanics.
		\newblock {\em Comm. Math. Phys.}, 112(1):147--174, 1987.
		
		\bibitem{LiebYau88}
		E.~H. Lieb and H.-T. Yau.
		\newblock The stability and instability of relativistic matter.
		\newblock {\em Comm. Math. Phys.}, 118(2):177--213, 1988.
		
		\bibitem{Lions1982}
		P.-L. Lions.
		\newblock Sym\'{e}trie et compacit\'{e} dans les espaces de {S}obolev.
		\newblock {\em J. Functional Analysis}, 49(3):315--334, 1982.
		
		\bibitem{Lions1984}
		P.-L. Lions.
		\newblock The concentration-compactness principle in the calculus of
		variations. {T}he locally compact case. {I}.
		\newblock {\em Ann. Inst. H. Poincar\'{e} Anal. Non Lin\'{e}aire},
		1(2):109--145, 1984.
		
		\bibitem{MoVS}
		V.~Moroz and J.~Van~Schaftingen.
		\newblock Groundstates of nonlinear {C}hoquard equations: existence,
		qualitative properties and decay asymptotics.
		\newblock {\em J. Funct. Anal.}, 265(2):153--184, 2013.
		
		\bibitem{Naber:Int}
		G.~L. Naber.
		\newblock {\em Topology, geometry and gauge fields}, volume 141 of {\em Applied
			Mathematical Sciences}.
		\newblock Springer, New York, second edition, 2011.
		\newblock Interactions.
		
		\bibitem{Naber:Found}
		G.~L. Naber.
		\newblock {\em Topology, geometry, and gauge fields}, volume~25 of {\em Texts
			in Applied Mathematics}.
		\newblock Springer, New York, second edition, 2011.
		\newblock Foundations.
		
		\bibitem{NgPiSqVe}
		H.-M. Nguyen, A.~Pinamonti, M.~Squassina, and E.~Vecchi.
		\newblock New characterizations of magnetic {S}obolev spaces.
		\newblock {\em Adv. Nonlinear Anal.}, 7(2):227--245, 2018.
		
		\bibitem{Olver1997}
		F.~W.~J. Olver.
		\newblock {\em Asymptotics and special functions}.
		\newblock AKP Classics. A K Peters, Ltd., Wellesley, MA, 1997.
		\newblock Reprint of the 1974 original [Academic Press, New York].
		
		\bibitem{ReedSimon1980}
		M.~Reed and B.~Simon.
		\newblock {\em Methods of modern mathematical physics. {I}}.
		\newblock Academic Press, Inc. [Harcourt Brace Jovanovich, Publishers], New
		York, second edition, 1980.
		\newblock Functional analysis.
		
		\bibitem{Secchi2019}
		S.~Secchi.
		\newblock A generalized pseudorelativistic {S}chr\"{o}dinger equation with
		supercritical growth.
		\newblock {\em Commun. Contemp. Math.}, 21(8):1850073, 21 pp, 2019.
		
		\bibitem{SqVo}
		M.~Squassina and B.~Volzone.
		\newblock Bourgain-{B}r\'{e}zis-{M}ironescu formula for magnetic operators.
		\newblock {\em C. R. Math. Acad. Sci. Paris}, 354(8):825--831, 2016.
		
		\bibitem{Watson1944}
		G.~N. Watson.
		\newblock {\em A {T}reatise on the {T}heory of {B}essel {F}unctions}.
		\newblock Cambridge University Press, Cambridge, England; The Macmillan
		Company, New York, 1944.
		
	\end{thebibliography}
\end{document}